%
%
%
%
%
%

\documentclass[12pt]{amsart}
\usepackage{amsfonts,amssymb,amscd,amstext,mathrsfs}
\usepackage[utf8]{inputenc}
\usepackage{hyperref}
\usepackage{verbatim}
 
\usepackage{graphics}
\usepackage{graphicx}

\usepackage{times}
\usepackage{enumerate}
\usepackage[up,bf]{caption}
\usepackage{color}
\input xy
\xyoption{all}


\usepackage[color=blue!20]{todonotes}

\textwidth = 157mm
\textheight = 239mm
\evensidemargin=0mm
\oddsidemargin=0mm
\hoffset=4mm
\voffset=-25mm
\parskip =1mm
\parindent = 6mm
\linespread{1.09}
\pagestyle{plain}

\pretolerance=2000
\tolerance=3000


\newtheorem{theorem}{Theorem}[section]
\newtheorem{proposition}[theorem]{Proposition}

\newtheorem{lemma}[theorem]{Lemma}
\newtheorem{corollary}[theorem]{Corollary}

\theoremstyle{definition}
\newtheorem{definition}[theorem]{Definition}
\newtheorem{remark}[theorem]{Remark}

\newtheorem{conjecture}[theorem]{Conjecture}
\newtheorem{problem}[theorem]{Problem}
\newtheorem{example}[theorem]{Example}

\numberwithin{equation}{section}
\numberwithin{figure}{section}


\newcommand\Acal{\mathcal{A}}

\newcommand\Hcal{\mathcal{H}}




\newcommand\Ascr{\mathscr{A}}

\newcommand\Oscr{\mathscr{O}}

\newcommand\Tscr{\mathscr{T}}


\newcommand\B{\mathbb{B}}
\newcommand\C{\mathbb{C}}
\newcommand\D{\overline{\mathbb D}}
\newcommand\CP{\mathbb{CP}}

\renewcommand\D{\mathbb D}

\newcommand\N{\mathbb{N}}

\newcommand\R{\mathbb{R}}

\newcommand\Z{\mathbb{Z}}


\newcommand\igot{\mathfrak{i}}

\renewcommand\igot{\mathfrak{i}}

%
%

%
%

\renewcommand\imath{\igot}

%
%
\newcommand\hra{\hookrightarrow}

%
%
\newcommand\wt{\widetilde}

\newcommand\di{\partial}

%
%

\newcommand\dist{\mathrm{dist}}

\newcommand\Pic{\mathrm{Pic}}

\def\dist{\mathrm{dist}}

\def\br{\mathrm{br}}

\numberwithin{equation}{section}

\makeatletter
\@namedef{subjclassname@2020}{\textup{2020} Mathematics Subject Classification}
\makeatother

%
%

\begin{document}
\title{Oka-1 manifolds}
\author{Antonio Alarc\'on \; and\; Franc Forstneri{\v c}}

\address{Antonio Alarc\'on, Departamento de Geometr\'{\i}a y Topolog\'{\i}a e Instituto de Matem\'aticas (IMAG), Universidad de Granada, Campus de Fuentenueva s/n, E--18071 Granada, Spain}
\email{alarcon@ugr.es}

\address{Franc Forstneri\v c, Faculty of Mathematics and Physics, University of Ljubljana, and Institute of Mathematics, Physics, and Mechanics, Jadranska 19, 1000 Ljubljana, Slovenia}
\email{franc.forstneric@fmf.uni-lj.si}

\subjclass[2020]{Primary 32E30, 32H02, 32Q56; secondary 14H55.}

\date{10 April 2025}

\keywords{Riemann surface; complex curve; Oka-1 manifold; Oka manifold; K3 surface}

\begin{abstract}
In this paper we begin a systematic study of the class of
complex manifolds which are universal targets of holomorphic maps from open 
Riemann surfaces. We call them Oka-1 manifolds, by analogy with 
Oka manifolds that are universal targets of holomorphic maps
from Stein manifolds of arbitrary dimension.
We prove that every complex manifold which is dominable at most points
by spanning tubes of complex lines in affine spaces is an Oka-1 manifold. 
In particular, a manifold dominable by $\C^n$ at most points is an Oka-1 manifold. We provide many examples of Oka-1 manifolds among compact 
complex surfaces, including all Kummer surfaces and all elliptic K3 surfaces.
We show that the class of Oka-1 manifolds is invariant under Oka-1 maps inducing 
a surjective homomorphism of fundamental groups; this includes 
holomorphic fibre bundles with connected Oka fibres.
In another direction, we prove that every bordered Riemann surface admits a 
holomorphic map with dense image in any connected complex manifold. 
The analogous result is shown for holomorphic Legendrian immersions in an arbitrary
connected complex contact manifold.
\end{abstract}

\maketitle

\setcounter{tocdepth}{1}
\tableofcontents

%
%
%
%
\section{Introduction}\label{sec:intro} 

%
%
The study of holomorphic curves is a perennial subject in complex 
and algebraic geometry. In this paper, we introduce and investigate a 
new class of complex manifolds characterized
by the property that they admit plenty of holomorphic curves parameterized 
by any open Riemann surface. 
Here is the precise definition of this class of manifolds.

%
%
\begin{definition}\label{def:Oka1} 
A connected complex manifold $X$ 
is an {\em Oka-1 manifold} if for any open Riemann surface $R$, 
compact Runge set $K$ in $R$, discrete sequence $a_i \in R$ without repetitions, 
continuous map $f:R\to X$ which is holomorphic on a neighbourhood of 
$K\cup \, \bigcup_i\{a_i\}$, number $\epsilon>0$, and integers 
$k_i\in\N=\{1,2,\ldots\}$ there is a holomorphic map 
$F:R\to X$ which is homotopic to $f$ and satisfies
\begin{enumerate}
\item $\sup_{p\in K} \dist_X(F(p),f(p)) < \epsilon$, and 
\item $F$ agrees with $f$ to order $k_i$ in the  point $a_i$ for every $i$.
\end{enumerate}
If condition (1) can be satisfied then $X$ has the 
Oka-1 property with approximation. 
A complex manifold $X$ is Oka-1 if every component of $X$ is such.
\end{definition}

Here, $\dist_X$ is a distance function on $X$ inducing its manifold topology. 
Since the approximation condition (1) takes place on a compact
subset, being Oka-1 is independent of the choice of $\dist_X$. 
Recall that a compact set $K$ in a connected 
Riemann surface $R$ is said to be Runge if the complement $R\setminus K$ 
has no relatively compact connected components. 
 
The definition of an Oka-1 manifold 
is inspired by the notion of an Oka manifold, 
which developed naturally from the Oka--Grauert--Gromov theory. 
A complex manifold $X$ is an Oka manifold 
if it satisfies the natural approximation and interpolation conditions
for maps $S\to X$ from Stein manifolds $S$ 
of arbitrary dimension. (See \cite[Chapter 5]{Forstneric2017E} and 
\cite{Forstneric2023Indag,ForstnericLarusson2011}.) 
Thus, every Oka manifold is also an Oka-1 manifold but 
the converse fails. For example, there is a discrete
set $A\subset\C^2$ whose complement is not dominable by $\C^2$
(see \cite{RosayRudin1988} or \cite[Sect.\ 4.7]{Forstneric2017E}), hence 
it fails to be Oka, but it is Oka-1 by a general position argument 
(see Corollary \ref{cor:thin}).
The classical results for holomorphic functions 
on open Riemann surfaces due to Runge \cite{Runge1885}, 
Weierstrass \cite{Weierstrass1876}, 
Behnke and Stein \cite{BehnkeStein1949},  
and Florack \cite{Florack1948} show that $\C^n$ is an Oka-1 manifold
for every $n\in \N$. The Cartesian product of Oka-1 manifolds
is obviously Oka-1.

In this paper, we investigate the class of Oka-1 manifolds by 
combining techniques from complex analysis and complex and algebraic geometry.

Here are some immediate observations. 
If $X$ is an Oka-1 manifold then for every point $x\in X$ 
and tangent vector $v\in T_x X$ there exists an entire map $f:\C\to X$ with $f(0)=x$ 
and $f'(0)=v$. Hence, the Kobayashi pseudometric of $X$ vanishes 
identically and every bounded plurisubharmonic function on $X$ is constant,
i.e., $X$ is Liouville. It is even strongly Liouville; see Corollary \ref{cor:SL}.
Assuming that $X$ is connected, it admits holomorphic maps with 
dense images from any open Riemann surface, in particular, from $\C$.
This implies that the class of compact K\"ahler or projective Oka-1 manifolds 
is conjecturally related to several important classes of complex manifolds studied 
in the literature, such as the special manifolds in the sense of Campana
\cite{Campana2004AIF,Campana2004AIF-2}; see the discussion 
by Campana and Winkelmann in \cite{CampanaWinkelmann2023}. 

The conditions in Definition \ref{def:Oka1} easily imply that a homotopy from 
the initial map $f$ to a holomorphic map $F$ can be chosen to consist of 
maps $R\to X$ which are holomorphic 
on a neighbourhood of $K\cup \bigcup_i\{a_i\}$
and agree with $f$ to order $k_i$ at $a_i$ for every $i$.
Note however that the axiom of Oka-1 manifolds does not include  
the parametric case concerning families of maps $R\to X$.
While the parametric Oka property follows from the basic Oka
property \cite[Proposition 5.15.1]{Forstneric2017E}, the proof 
uses the basic Oka principle for maps from Stein manifolds of arbitrary dimension,
and it does not apply to Oka-1 manifolds. One could introduce 
and study the class of complex manifolds satisfying the parametric 
Oka-1 property, but most of the techniques developed in this paper 
do not apply to this case.

%
%
%
We now describe our main results, deferring the precise statements to individual sections.  

In Section \ref{sec:tubes} we introduce a geometric sufficient condition on a complex 
manifold to be Oka-1; see Theorem \ref{th:main1}. 
This condition concerns dominability of the manifold
by spanning tubes of complex lines in affine spaces $\C^n$. 
It holds on any complex manifold $X$ which is dominable by $\C^n$ 
at every point in a Zariski open domain, so every such
manifold is Oka-1; see Corollary \ref{cor:dominable}. 
In particular, a connected algebraic manifold which is algebraically dominable 
by $\C^n$ is Oka-1. Dominability by tubes of lines is a considerably 
weaker condition than any of the sufficient conditions in the theory 
of Oka manifolds, and is the first known condition implying an 
Oka-type property which is local in the Hausdorff topology. 
(The Oka property is Zariski local according 
to Kusakabe \cite[Theorem 1.4]{Kusakabe2021IUMJ}.) 

After the preparatory Sections \ref{sec:trees}--\ref{sec:noncritical},
Theorem \ref{th:main1} is proved in Section \ref{sec:proof} as a special case 
of Theorem \ref{th:main2}. The proof reveals several features of independent interest.
In particular, Proposition \ref{prop:weakOka1} shows that, to establish 
the Oka-1 property of a complex manifold $X$, it suffices to show that every 
holomorphic map $K\to X$ from a neighbourhood of a compact set $K$
with piecewise smooth boundary in an open Riemann surface $R$ can be approximated
by holomorphic maps $L=K\cup D\to X$, where $D\subset R$ is any compact disc attached 
to $K$ along a boundary arc. This resembles the convex approximation property,
CAP, characterizing the class of Oka manifolds (see \cite[Section 5]{Forstneric2017E}).
Here, we only need to approximate maps from one-dimensional domains which,
however, may be topologically nontrivial. 

In Section \ref{sec:functorial} we study functorial properties of the class 
of Oka-1 manifolds. 
Among the main results of the section are Theorem \ref{th:updown}, which shows that 
the class of Oka-1 manifolds is invariant under Oka maps inducing a surjective 
homomorphism of fundamental groups, and Corollary \ref{cor:updown}
which gives the same conclusion for Oka-1 maps; see Definition \ref{def:Oka1map}.
In particular, if $h:X\to Y$ is a holomorphic 
fibre bundle with a connected Oka fibre, then $X$ is an Oka-1 manifold 
if and only if $Y$ is an Oka-1 manifold. Recall that Oka maps preserve the
class of Oka manifolds; see \cite[Theorem 3.15]{Forstneric2023Indag}. 
We show by examples that Oka-1 manifolds are in general not open or 
closed in smooth families of manifolds. We also introduce the class
LSAP of complex manifolds having the {\em local spray approximation
property}; see Definition \ref{def:LSAP}. This condition 
is Hausdorff local, it holds on every Oka manifold and is 
invariant under Oka maps, it implies the conclusion of
Proposition \ref{prop:weakOka1} and hence the Oka-1 property, and it seems
to have nontrivial functorial properties that remain to be fully explored.

The question of holomorphic dominability of complex surfaces by $\C^2$
was studied in the seminal paper \cite{BuzzardLu2000} by Buzzard and Lu.
In Section \ref{sec:surfaces} we combine their results, 
and some extensions obtained by inspection of their proofs, 
with the analytic methods developed in this paper to summarize 
what we know about which such surfaces are Oka-1. 
We show in particular that the class of Oka-1 manifolds 
contains most compact complex surfaces of Kodaira dimension $-\infty$, 
all Kummer surfaces and elliptic K3 surfaces, and many elliptic surfaces of 
Kodaira dimension $1$. It turns out that for some classes of 
compact complex surfaces,  
the conditions of being Oka, Oka-1, dominable by $\C^2$, 
and having a Zariski dense entire line $\C\to X$, are equivalent. 
We expect that this is a low dimensional phenomenon and 
that the gaps between these conditions increase with the 
dimension. 

In Section \ref{sec:RC} we discuss the conjecture that every rationally 
connected projective manifold is an Oka-1 manifold,
and we mention partial results in this direction. 

%
%
%
Finally, in Section \ref{sec:dense} we prove that for every connected complex
manifold $X$ and open bordered Riemann surface $M$ there exist
holomorphic curves $M \to X$ passing through any given sequence
of points in $X$. Essentially the same proof, together with the main result
of \cite{Forstneric2022APDE}, gives the analogous statement for holomorphic
Legendrian immersions from bordered Riemann surfaces to any connected 
complex contact manifold. The existence of proper holomorphic
maps, immersions, and embeddings from bordered Riemann surfaces 
to certain noncompact complex manifolds was studied in \cite{DrinovecForstneric2007DMJ,Forstneric2023Roumaine}.

The results in this paper also hold for holomorphic maps from open 
1-dimensional complex spaces, since every such space is normalized 
by an open Riemann surface.

%
%
Since the initial version of this paper was posted on the arXiv in 2023,
several works appeared citing the preprint.
In \cite{ForstnericLarusson2025MZ}, Forstneri\v c and L\'arusson 
discovered new examples and functorial properties of Oka-1 manifolds. 
They also formulated and studied the algebraic version of the Oka-1 condition, 
called aOka-1, which concerns approximation and 
interpolation of holomorphic maps from open subsets 
of affine algebraic curves to algebraic manifolds by regular algebraic maps.
They showed that aOka-1 property is a birational invariant for compact 
algebraic manifolds and holds for all rational manifolds
and all algebraically elliptic projective manifolds 
\cite[Theorem 1.6]{ForstnericLarusson2025MZ}. 
Every aOka-1 manifold is also an Oka-1 manifold as follows
from Proposition \ref{prop:weakOka1} (b);
see \cite[Proposition 1.9]{ForstnericLarusson2025MZ}.
Every projective aOka-1 manifold is 
rationally connected \cite[Proposition 1.7]{ForstnericLarusson2025MZ}.
Very recently, Benoist and Wittenberg
\cite[Theorem 1.2]{BenoistWittenberg2025} showed that every 
projective rationally simply connected manifold is an aOka-1 manifold. 
(See the more precise discussion following Conjecture \ref{con:RC}.)
They showed that this class contains every smooth
hypersurface of degree $d$ in $\CP^n$ with ${n\geq d^2-1}$
\cite[Corollary 1.3]{BenoistWittenberg2025}. 
Oka-1 manifolds were also used by Guo and Xie 
\cite{GuoXie2024MA} in their construction of universal holomorphic maps 
with slow growth, and by Xie \cite{Xie2024IMRN} 
in the construction of entire curves producing distinct Nevanlinna currents.

%
%
%
%
\section{A complex manifold densely dominable by tubes of lines is Oka-1}\label{sec:tubes}

In this section, we introduce a geometric condition implying the Oka-1 property. 
It is based on the notions of a tree and a tube of complex lines, 
and of dominability by such tubes. It holds in particular on any 
complex manifold $X$ which is dominable by $\C^{\dim X}$ at most points. 

An affine complex line in $\C^n$ is a set of the form 
$\Lambda=\{a+tv:t\in\C\}=a+\C v$, where $a\in \C^n$ and 
$v\in \C^n\setminus \{0\}$ is a {\em direction vector} of $\Lambda$.

%
%
\begin{definition}\label{def:tree}
A {\em tree of lines} in $\C^n$ is a connected set $\Lambda=\bigcup_{i=1}^k \Lambda_i$ 
whose {\em branches} $\Lambda_i$ are affine complex lines with linearly independent 
direction vectors $v_i\in\C^n$. 
The tree $\Lambda$ is {\em spanning} if $k=n$; equivalently, if 
the direction vectors $v_1,\ldots,v_k$ are a basis of $\C^n$. 
An open connected neighbourhood $T\subset \C^n$ of a tree of lines $\Lambda$ 
is a {\em tube of lines} around $\Lambda$ if $T$ is a union of affine translates 
of $\Lambda$. Such a tube $T$ is {\em spanning} if the tree $\Lambda$ is spanning.
\end{definition}

%
%
A complex manifold $X$ is said to be {\em dominable} by 
a complex manifold $Z$ at a point $x\in X$ if there exists a holomorphic map
$F:Z\to X$ and a point $z\in Z$ such that $F(z)=x$ and the differential
$dF_z:T_z Z\to T_xX$ is surjective; this requires that $\dim Z\ge \dim X$. 
The classical notion of dominability refers to the case $Z=\C^n$ with $n=\dim X$.

We denote by $\Hcal^{k}=\Hcal^{k}_{X,g}$ the $k$-dimensional Hausdorff measure on
a manifold $X$ with respect to a Riemannian metric $g$ on $X$; see 
Federer \cite{Federer1969} or Morgan \cite{Morgan2009} for this notion. 

Here is our first main result. Note that the choice of the Riemannian metric $g$ on $X$ is 
irrelevant in the condition $\Hcal^{2n-1}(E)=0$ used in the theorem. 

%
%

\begin{theorem}\label{th:main1}
Let $X$ be a complex manifold of dimension $n$. Assume that 
there is a closed subset $E\subset X$ with $\Hcal^{2n-1}(E)=0$ such
that at every point $x\in X\setminus E$, $X$ is dominable by a spanning tube 
of lines $T\subset\C^N$ (possibly depending on $x$).
Then, $X$ is an Oka-1 manifold. 
\end{theorem}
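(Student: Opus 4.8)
The plan is to deduce the theorem from Proposition \ref{prop:weakOka1}, so that it suffices to solve the following one-dimensional approximation problem: given a holomorphic map $f$ on a neighbourhood of a compact set $K\subset R$ with piecewise smooth boundary in an open Riemann surface, and a compact disc $D\subset R$ attached to $K$ along a boundary arc $\gamma$, approximate $f$ uniformly on $K$ by maps that are holomorphic on $L=K\cup D$. The essential point is that, since a holomorphic map is determined by its values on the arc $\gamma$ by the identity theorem, producing such an $F$ amounts to extending $f$ holomorphically across $\gamma$ and over the disc $D$ into $X$, where the $\epsilon$-room on $K$ supplies exactly the flexibility needed to do so even when the naive analytic continuation of $f$ over $D$ fails to exist. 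As noted after Proposition \ref{prop:weakOka1}, no homotopy obstruction arises here, because $L$ deformation retracts onto $K$ and so any such extension is automatically homotopic to $f$.

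Before constructing the extension I would arrange that the attaching arc avoids the exceptional set $E$. The hypothesis $\Hcal^{2n-1}(E)=0$ is tailored to this: covering $\gamma$ by finitely many charts and perturbing $f$ on each chart-arc by a small constant vector $w\in\C^n$, the set of forbidden vectors is $E-f(\gamma)$, and by the product inequality for Hausdorff measures together with the rectifiability of $\gamma$ one has $\Hcal^{2n}\big(E-f(\gamma)\big)\le C\,\Hcal^{2n-1}(E)\,\Hcal^{1}(\gamma)=0$. Hence a generic arbitrarily small $w$ gives $f(\gamma)\cap E=\varnothing$, and after patching these local perturbations I may assume $\dist_X(f(\gamma),E)>0$. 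Every point of the compact arc $f(\gamma)$ then admits domination by a spanning tube of lines.

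The core of the argument is local, and I would carry it out by subdividing $D$ into a chain of small bumps attached successively, each along a single boundary arc, so that at every stage the current attaching arc is mapped into a neighbourhood $U$ of a single point $x_0\in X\setminus E$ over which the dominating map $F\colon T\to X$ from a spanning tube $T\subset\C^N$ is a submersion and thus admits a holomorphic section $s\colon U\to T$. Lifting the current map by $g=s\circ f$ produces disc-boundary data with values in $T$; I would extend $g$ over the bump into $T$, approximating on the arc, and push the result down by $F$ to extend the map over the bump into $X$ while staying $\epsilon$-close to the old map on the previously constructed part. The extension inside $T$ is where the line structure of Definition \ref{def:tree} enters: since $T$ is a union of affine translates of a spanning tree $\Lambda=\bigcup_{i=1}^{n}\Lambda_i$, one can move to infinity along each branch direction $v_i$ without leaving $T$, and adding holomorphic functions in these $n$ independent directions, followed by a Mergelyan approximation, furnishes the extension inside the tube; this is the content prepared in Sections \ref{sec:trees}--\ref{sec:noncritical}. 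Gluing across the arc by a Mergelyan approximation on $L$, iterating over the bumps and then over a normal exhaustion of $R$ with summable errors, yields the desired global map $F\colon R\to X$; this is the route by which Theorem \ref{th:main1} is obtained as the special case of Theorem \ref{th:main2} proved in Section \ref{sec:proof}.

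I expect the main obstacle to be the uniform control in the tube-extension step rather than any single construction. One must keep the extended disc inside the tube $T$ while approximating the datum $g$ on the arc, and do so with bounds that survive the pushdown by $F$ and the gluing, so that the accumulated errors stay summable over the whole exhaustion; at the same time, the general-position perturbations that keep each successive attaching arc off $E$ must be compatible with the running approximation, since after each pushdown the next arc lies only approximately where $f$ placed it and may have drifted toward $E$. Balancing the size of the bumps against the radius $\rho(x_0)>0$ of the domination neighbourhood $U$ --- fixing a good point $x_0\notin E$ first and only then shrinking the bump so that its image lies in $U$ --- is what makes the local step go through uniformly, and organising this bookkeeping across all stages is the delicate part of the proof.
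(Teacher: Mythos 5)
Your outline follows the same route as the paper: reduce via Proposition \ref{prop:weakOka1} (equivalently, the Morse-exhaustion scheme of Theorem \ref{th:main2}) to the disc-attachment step, move the attaching arc off $E$ by general position using $\Hcal^{2n-1}(E)=0$, subdivide the bump so each attaching arc lands in a submersion chart of a dominating map $\sigma\colon T\to X$ from a spanning tube, lift, extend inside $T$ using the independent branch directions (Lemma \ref{lem:bump}), push down, and iterate with summable errors. There is, however, a genuine gap at the gluing step. After you lift the \emph{single} map $f$ near the arc by a section $s$, extend the lift over the bump by approximation, and push down by $\sigma$, you obtain a second holomorphic map $h=\sigma\circ\tilde g$ on a neighbourhood of the arc union the bump which only \emph{approximately} agrees with $f$ on the overlap. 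Your proposed ``gluing across the arc by a Mergelyan approximation on $L$'' cannot merge these two maps: Mergelyan's theorem approximates one given map of class $\Ascr$ on an admissible set, it does not produce a holomorphic map on $L$ out of two nearby holomorphic maps with a nonzero discrepancy on the overlap. This is precisely why the paper runs the entire local step at the level of \emph{dominating holomorphic sprays}: in Lemma \ref{lem:noncritical} one builds a dominating spray $F\colon V\times\B^N\to X$ with core $f$ (see \eqref{eq:F}), lifts the spray --- not the map --- through the submersion chart into the tube, extends it by the spray version of Lemma \ref{lem:bump} (Remark \ref{rem:bump}), pushes down to sprays $G_i$, and then glues $F$ with the $G_i$ by the Cartan-pair technique of \cite[Propositions 5.8.1 and 5.9.2]{Forstneric2017E}; the domination of $F$ on $V\setminus A$ is exactly what absorbs the discrepancy on the overlaps. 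Without this (or an equivalent splitting device) your induction cannot close.

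The same omission undermines your general-position step. Perturbing $f$ by small constant vectors chart-by-chart along $\gamma$ yields local perturbations that you cannot ``patch'' into one holomorphic map on a neighbourhood of $K$ --- patching nearby holomorphic maps is again a gluing problem. The paper's Remark \ref{rem:transversality} avoids it by perturbing inside a single global dominating spray $F(\cdotp,t)$ and applying Fubini to $F^{-1}(E)$, so that one parameter value $t$ moves the whole boundary off $E$ while preserving the jet interpolation at the prescribed points. Your measure-theoretic heuristic is the right one, but its holomorphic implementation needs the spray; once the spray machinery is installed, the rest of your plan (including the bookkeeping of bump sizes against domination neighbourhoods) coincides with the paper's proof.
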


\begin{remark}\label{rem:dominable}
When the condition in the theorem holds, we say that 
$X$ is {\em densely dominable} by (spanning) tubes of lines. If this holds with $E=\varnothing$, we say that $X$ is {\em strongly dominable} by tubes of lines. 
Note that $\C^n$ itself is a spanning tube of lines.  
\end{remark}

We wish to point out that dense (and even strong) dominability by tubes of lines, 
or by Euclidean spaces, is a considerably weaker condition  
than any of the sufficient conditions used in the theory of Oka manifolds.  
It is the first known condition implying an Oka-type property of a complex
manifold $X$ which is local in the Hausdorff topology, in the sense that dominability 
is required at a single point of $X$ at a time. For domains in $\C^n$, 
a similar condition was considered in \cite[Theorem 1.12]{DrinovecForstneric2023JMAA},
where the main focus was on the study of domains in $\R^n$ 
satisfying a similar property with respect to minimal surfaces.

Theorem \ref{th:main1} is proved in Section \ref{sec:proof} in a more general
form; see Theorem \ref{th:main2}. Sections \ref{sec:trees}--\ref{sec:noncritical}
contain the preparatory technical lemmas. In the remainder of this section, 
we discuss applications of this theorem and its relationship to extant results in the literature.

The following obvious corollary was the vantage point of our investigations.

\begin{corollary}\label{cor:Cn}
If $\Omega$ is a connected domain in $\C^n$ such that every point 
$z\in \Omega$ is contained in a spanning tube of lines $T_z \subset \Omega$,
then $\Omega$ is an Oka-1 manifold. In particular, every spanning tube
of lines in $\C^n$ is an Oka-1 manifold.
\end{corollary}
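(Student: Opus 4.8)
The plan is to verify directly that $\Omega$ satisfies the hypothesis of Theorem \ref{th:main1} with the exceptional set empty, so that the conclusion is immediate. The only content to check is that containing a point in a spanning tube that already lies inside $\Omega$ produces a dominating map at that point.

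First I would fix an arbitrary point $z\in\Omega$ and, using the hypothesis, choose a spanning tube of lines $T_z\subset\Omega$ with $z\in T_z$. Since $T_z$ is an open subset of $\C^n$ contained in $\Omega$, the inclusion $\iota_z\colon T_z\hookrightarrow\Omega$ is holomorphic and satisfies $\iota_z(z)=z$; under the natural identification of the tangent spaces of the open sets $T_z$ and $\Omega$ with $\C^n$, its differential $d(\iota_z)_z$ is the identity, hence surjective. By the definition of dominability recalled before the theorem, this shows that $\Omega$ is dominable by the spanning tube $T_z$ at the point $z$ (one may take $N=n$). As $z\in\Omega$ was arbitrary, the hypothesis of Theorem \ref{th:main1} holds with $E=\varnothing$, so $\Omega$ is strongly dominable by tubes of lines, and Theorem \ref{th:main1} yields that $\Omega$ is an Oka-1 manifold.

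For the final assertion, let $T\subset\C^n$ be any spanning tube of lines. By Definition \ref{def:tree} it is a connected domain in $\C^n$, and every point $z\in T$ lies in the spanning tube $T$ itself, which is trivially contained in $T$. Hence $T$ satisfies the hypothesis of the corollary with $T_z=T$ for every $z$, and the first part shows that $T$ is Oka-1.

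There is no real obstacle here, which is precisely why the statement is an immediate corollary once Theorem \ref{th:main1} is available: dominability of $\Omega$ by a tube of lines is witnessed by the inclusion of a tube already sitting inside $\Omega$, whose differential is the identity and therefore automatically surjective. This is what makes the criterion of Theorem \ref{th:main1} local in character and explains why it served as the vantage point of the investigation.
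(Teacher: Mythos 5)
Your proof is correct and is essentially the argument the paper has in mind: the paper states this corollary as an ``obvious'' consequence of Theorem \ref{th:main1}, and the obvious argument is exactly yours, namely that the inclusion $T_z\hookrightarrow\Omega$ of a spanning tube witnesses dominability at each point $z$, so $\Omega$ is strongly dominable by tubes of lines (take $E=\varnothing$). The observation that a spanning tube is itself connected (by Definition \ref{def:tree}) and trivially satisfies the hypothesis with $T_z=T$ correctly handles the final assertion.
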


A domain $\Omega$ in a complex manifold $X$ is said to be {\em Zariski open}
if the complement $X\setminus \Omega$ is a proper closed complex subvariety of $X$.
This is the holomorphic analogue of the standard notion in the 
category of complex algebraic manifolds. 
Let us record several further observations which follow from Theorem \ref{th:main1}.

%
%
\begin{corollary}\label{cor:dominable}
Let $X$ be a connected complex manifold of dimension $n$.
\begin{enumerate}[\rm (a)]
\item 
If $X$ is densely dominable by $\C^n$, then $X$ is an Oka-1 manifold. 
\item 
If $X$ is dominable by $\C^n$ at every point in a Zariski open subset, 
or if it contains a Zariski open Oka domain, then $X$ is an Oka-1 manifold.
\item 
If $Y$ is a complex manifold of dimension $n=\dim X$ 
which is densely dominable by tubes of lines and
$h:Y\to \Omega$ is a proper holomorphic map onto a Zariski open 
subset $\Omega$ of $X$, then $X$ is an Oka-1 manifold.
\item If $h:Y\to X$ is a surjective holomorphic map of compact complex manifolds 
of the same dimension and $Y$ is densely dominable, 
then $X$ is an Oka-1 manifold.
\item 
A connected algebraic manifold algebraically dominable by $\C^n$ is an Oka-1 manifold.
\end{enumerate}
\end{corollary}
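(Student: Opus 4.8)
The plan is to derive all five statements from Theorem \ref{th:main1} by exhibiting, in each case, a closed exceptional set $E\subset X$ with $\Hcal^{2n-1}(E)=0$ outside of which $X$ is dominable by a spanning tube of lines. Parts (a) and (b) are essentially immediate. Since $\C^n$ is itself a spanning tube of lines (Remark \ref{rem:dominable}), dominability of $X$ by $\C^n$ at a point is a special case of dominability by a spanning tube, so dense dominability by $\C^n$ is a special case of the hypothesis of Theorem \ref{th:main1}, giving (a). For (b), a Zariski open $\Omega\subset X$ has complement $X\setminus\Omega$ a proper closed complex subvariety, of real dimension at most $2n-2$, so $\Hcal^{2n-1}(X\setminus\Omega)=0$; taking $E=X\setminus\Omega$ reduces the first clause to Theorem \ref{th:main1}. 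For the second clause I would use that an Oka manifold of dimension $n$ is dominable by $\C^n$ at every one of its points (it carries a dominating spray), so a Zariski open Oka domain falls under the first clause.

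The substantive case is (c), and the plan is to transport dense dominability from $Y$ across the proper map $h\colon Y\to\Omega$. Let $E_Y\subset Y$ be the closed $\Hcal^{2n-1}$-null set outside of which $Y$ is dominable by spanning tubes, and let $B_Y=\{y\in Y: dh_y\text{ is not an isomorphism}\}$. Because $h$ is proper and surjective between connected manifolds of the same dimension $n$, it is generically a local biholomorphism, so $B_Y$ is a proper closed complex subvariety with $\Hcal^{2n-1}(B_Y)=0$; set $E_Y'=E_Y\cup B_Y$, a closed $\Hcal^{2n-1}$-null subset of $Y$. I would then show that $h(E_Y')$ is a closed $\Hcal^{2n-1}$-null subset of $\Omega$: it is closed because a proper continuous map into a locally compact Hausdorff space is closed, and it is null because $h$ is smooth, hence locally Lipschitz, and locally Lipschitz maps do not increase $\Hcal^{2n-1}$ (cover $E_Y'$ by countably many relatively compact sets on which $h$ is Lipschitz). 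Putting $E=h(E_Y')\cup(X\setminus\Omega)$ gives a closed subset of $X$ with $\Hcal^{2n-1}(E)=0$ (closed in $X$ since $X\setminus\Omega$ is closed and absorbs any limit points of $h(E_Y')$ leaving $\Omega$). For $x\in X\setminus E$ we have $x\in\Omega$, and since $x\notin h(E_Y')$ while $h^{-1}(x)\neq\varnothing$, some preimage $y\in h^{-1}(x)$ lies outside $E_Y'$; then $h$ is a local biholomorphism at $y$ and there is a spanning tube $T$ and a holomorphic $F\colon T\to Y$ submersive at a point $z$ with $F(z)=y$, whence $h\circ F\colon T\to X$ is submersive at $z$ and sends $z$ to $x$. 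Thus $X$ is densely dominable by spanning tubes and Theorem \ref{th:main1} applies.

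Parts (d) and (e) then follow quickly. For (d), if $Y$ and $X$ are compact of the same dimension and $h\colon Y\to X$ is holomorphic and surjective, then $h$ is automatically proper (its domain is compact) and $\Omega=X$ is trivially Zariski open; since dense dominability of $Y$ entails dense dominability by tubes of lines, (d) is the case $\Omega=X$ of (c). For (e), algebraic dominability by $\C^n$ furnishes a dominant morphism $F\colon\A^n\to X$; working in characteristic zero, generic smoothness together with Chevalley's theorem and the dominance of $F$ yield a Zariski open dense $\Omega\subseteq X$ contained in $F(\A^n)$ over which $F$ is a submersion. Viewing $F$ as an entire holomorphic map $\C^n\to X$, it is submersive at a preimage of each $x\in\Omega$, so $X$ is dominable by $\C^n$ at every point of $\Omega$, and part (b) yields the conclusion.

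The hard part will be the measure-theoretic bookkeeping in (c): one must guarantee \emph{simultaneously} that the transported exceptional set $h(E_Y')$ is genuinely closed in $X$ and of vanishing $\Hcal^{2n-1}$-measure. This forces the combined use of properness of $h$ (so that $h$ is a closed map and, by Remmert's theorem, carries the subvariety $B_Y$ to a subvariety) and of the local Lipschitz behaviour of $h$ (to control the image of the merely null, non-analytic part $E_Y$), together with the observation that the dominating point can always be chosen off the branch locus so that post-composition with $h$ preserves submersivity.
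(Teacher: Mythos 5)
Your proposal is correct and follows essentially the same route as the paper's proof: every part is reduced to Theorem \ref{th:main1} by exhibiting a closed $\Hcal^{2n-1}$-null exceptional set, with part (c) handled exactly as in the paper by pushing forward the exceptional set together with the branch locus $\br\, h$ across the proper map and composing dominating maps with $h$ off that locus, and (e) via a Zariski open set over which the dominating algebraic map is submersive. The only difference is that you spell out the measure-theoretic and topological bookkeeping (properness gives closedness, local Lipschitz bounds preserve $\Hcal^{2n-1}$-nullity) that the paper's proof leaves implicit.
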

%
%
\begin{proof}
Part (a) follows directly from Theorem \ref{th:main1}.  
To obtain (b), note that an Oka domain $\Omega\subset X$ is dominable by $\C^n$ with 
$n=\dim X$ at every point. If $\Omega$ is Zariski open in $X$ then 
$A=X\setminus \Omega$ is a closed complex subvariety of $X$ with $\Hcal^{2n-1}(A)=0$.
Hence, $X$ is densely dominable by $\C^n$, so it is Oka-1 by (a). 
To prove (c), assume that $E$ is a closed subset 
of $Y$ with $\Hcal^{2n-1}(E)=0$. Then, $E'=h(E\cup \br\, h)$ 
is a closed subset of $\Omega$ with $\Hcal^{2n-1}(E')=0$. 
(Here, $\br\, h$ denotes the branch locus of a holomorphic map $h$.)
Since $A=X\setminus \Omega$ is a proper complex subvariety
of $X$, $A\cup E'$ is a closed subset of $X$ with 
$\Hcal^{2n-1}(A\cup E')=0$. Note that $X$ is dominable at every 
point of $X\setminus A\cup E'$, so the conclusion follows from (a).
Part (d)  is an obvious consequence of (c). Finally, to see (e) note that 
if $X$ is an algebraic manifold and $F:\C^n\to X$ is a dominating algebraic map
then $F(\C^n\setminus \br\, F)$ is a Zariski open domain in $X$. Hence, $X$ is
densely dominable by $\C^n$, and thus an Oka-1 manifold by (a).
\end{proof}

Examples show that parts (c) and (d) in the corollary 
fail in general if $\dim Y>\dim X$.
However, given an Oka map $h: Y\to X$ which induces
a surjective homomorphism of fundamental groups, $X$ is Oka-1 
if and only if $Y$ is Oka-1; see Theorem \ref{th:updown}. 
A connected Oka manifold $X$ is strongly dominable 
by a holomorphic map $F:\C^n\to X$ with $n=\dim X$, in the sense that 
$F(\C^n\setminus \br\, F)=X$ (see \cite{Forstneric2017Indam}). 
It is not known whether every complex $n$-manifold which is strongly dominable 
by $\C^n$ is an Oka manifold; see \cite[Section 7.1]{Forstneric2017E}.

A comprehensive study of complex surfaces holomorphically dominable by 
$\mathbb C^2$ was made by Buzzard and Lu \cite{BuzzardLu2000}. 
%
Inspection shows that the complex surfaces for which dominability 
is established in their paper are actually densely dominable by $\C^n$, hence Oka-1.
We discuss these applications in Section \ref{sec:surfaces}. 
Among the highlights, we mention that every Kummer surface and every
elliptic K3 surface is an Oka manifold; see Proposition \ref{prop:Kummer} and 
Corollary \ref{cor:K3elliptic}.

When $\dim X=1$, i.e., $X$ is a Riemann surface, dominability by 
tubes of lines is clearly equivalent to dominability by $\C$, 
which holds if and only if $X$ is one of the surfaces 
$\CP^1,\C,\C^*=\C\setminus\{0\}$, or a torus.
These are precisely the Riemann surfaces which are Oka manifolds;
see \cite[Corollary 5.6.4]{Forstneric2017E}.  
Summarizing, we have the following observation.

%
%
\begin{corollary}\label{cor:RS}
For a Riemann surface $X$ the following properties are pairwise equivalent.
\begin{itemize}
\item $X$ is an Oka manifold.
\item $X$ is an Oka-1 manifold.
%
%
\item $X$ is dominable by $\C$.
\item $X$ is not Kobayashi hyperbolic.
\item $X$ is one of the Riemann surfaces $\CP^1,\C,\C^*$, or a torus.
\end{itemize}
\end{corollary}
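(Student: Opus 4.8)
The plan is to run a single cyclic chain of implications through the Oka, Oka-1, and hyperbolicity conditions, and then to slot dominability by $\C$ into the resulting equivalence class. Label the five properties (A) $X$ is Oka, (B) $X$ is Oka-1, (C) $X$ is dominable by $\C$, (D) $X$ is not Kobayashi hyperbolic, and (E) $X\in\{\CP^1,\C,\C^*,\text{torus}\}$. First I would record the two soft implications already isolated in the introduction: every Oka manifold is Oka-1, giving (A)$\Rightarrow$(B); and if $X$ is Oka-1 then for each $x\in X$ and $v\in T_xX$ there is an entire map $\C\to X$ realizing that $1$-jet, so the Kobayashi pseudometric of $X$ vanishes identically and in particular $X$ is not hyperbolic, giving (B)$\Rightarrow$(D).

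The geometric heart is (D)$\Rightarrow$(E), which is pure uniformization. By the uniformization theorem the universal cover of $X$ is $\CP^1$, $\C$, or the disc $\D$; a Riemann surface covered by $\D$ carries the descended Poincar\'e metric, which coincides with its Kobayashi metric and is nondegenerate, so the surfaces covered by $\D$ are exactly the hyperbolic ones. Thus (D) forces the universal cover to be $\CP^1$ or $\C$. In the first case the deck group acts freely on $\CP^1$, but every M\"obius transformation has a fixed point, so the group is trivial and $X=\CP^1$. In the second case the deck group is a discrete subgroup of $\Aut(\C)=\{z\mapsto az+b\}$ acting freely; freeness excludes any affine map with $a\neq1$ (which has a fixed point), so the group consists of translations and is a lattice of rank $0,1,2$, yielding $X\cong\C$, $X\cong\C/\Z\cong\C^*$, or a torus $\C/\Lambda$, respectively. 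This is (E).

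To close the cycle I would invoke the known classification of Oka Riemann surfaces: by \cite[Corollary 5.6.4]{Forstneric2017E} each of $\CP^1,\C,\C^*$ and every torus is Oka, which is (E)$\Rightarrow$(A). Together with the previous steps this makes (A), (B), (D), (E) pairwise equivalent. It remains to weave in (C). For (E)$\Rightarrow$(C) I would exhibit explicit dominating maps: the affine chart $z\mapsto[z:1]$ for $\CP^1$, the identity for $\C$, the exponential $\exp\colon\C\to\C^*$, and the locally biholomorphic quotient projection $\C\to\C/\Lambda$ for a torus; each has surjective differential at some point. For (C)$\Rightarrow$(D) I would use that a hyperbolic $X$ admits no nonconstant entire curve, since $\C$ has vanishing Kobayashi pseudometric and holomorphic maps are distance-decreasing, so any $f\colon\C\to X$ is constant; hence dominability by $\C$ precludes hyperbolicity. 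This places (C) into the equivalence class and finishes the argument.

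I expect no genuinely hard step: the only deep input is the external characterization of Oka Riemann surfaces used in (E)$\Rightarrow$(A), while the main part we carry out ourselves, the classification (D)$\Rightarrow$(E), is classical uniformization. The one point demanding care is the equivalence ``hyperbolic $\iff$ universal cover $\D$'', that is, identifying the Kobayashi metric on quotients of $\D$ with the descended Poincar\'e metric and checking that it degenerates for quotients of $\C$ and of $\CP^1$.
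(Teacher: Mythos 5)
Your proposal is correct and follows essentially the same route as the paper: the paper's justification is exactly the combination of the soft implications from the introduction (Oka $\Rightarrow$ Oka-1, and Oka-1 $\Rightarrow$ vanishing Kobayashi pseudometric, hence dominability by $\C$ in dimension one), the classical uniformization-based classification of non-hyperbolic Riemann surfaces as $\CP^1,\C,\C^*$, or a torus, and the citation of \cite[Corollary 5.6.4]{Forstneric2017E} identifying these as precisely the Oka Riemann surfaces. The only difference is one of exposition: the paper states these facts in a brief summary paragraph, whereas you spell out the deck-group argument and the disc-quotient hyperbolicity identification explicitly.
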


The proof of Theorem \ref{th:main1}, given in Section \ref{sec:proof}, 
shows that the following ostensibly weaker approximation and interpolation 
conditions imply Oka-1 properties (see Remark \ref{rem:weakOka1}).
We shall say that a map $f:K\to X$ is holomorphic on a compact set 
$K$ if it is the restriction to $K$ of a holomorphic map on a neighbourhood 
of $K$ in the ambient manifold.

%
%
\begin{proposition}\label{prop:weakOka1}
Let $X$ be a connected complex manifold.
\begin{enumerate}[\rm (a)]
\item Assume that for any open Riemann surface $R$ and pair of 
compact sets $K\subset L$ in $R$ with piecewise smooth boundaries such that 
$D=L\setminus \mathring K$ is a disc attached to $K$ along an arc 
$\alpha\subsetneq bD$, every holomorphic map $f:K\to X$ can be  
approximated uniformly on $K$ by holomorphic maps $\tilde f:L\to X$.
Then, $X$ has the Oka-1 property with approximation.
\item If in addition the map $\tilde f$ in part (a) can be chosen such that it agrees 
with $f$ in a given finite set of points in $K$, then $X$ has the Oka-1 property 
with approximation and interpolation. If in addition jet interpolation is 
possible then $X$ is an Oka-1 manifold.
\end{enumerate}
\end{proposition}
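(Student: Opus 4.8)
The plan is to derive the global Oka-1 properties from the local disc-attachment hypothesis by a Morse-theoretic induction along an exhaustion of $R$, arranging that the hypothesis supplies the only genuinely two-dimensional (disc-filling) step while all topology changes are routed through totally real arcs. Fix the data $R,K,f,\epsilon$ as in Definition \ref{def:Oka1} (for part (a) I drop the interpolation points $a_i$). Since every open Riemann surface is Stein, I would choose a smooth strongly subharmonic Morse exhaustion function $\rho\colon R\to\R$ together with regular values $c_0<c_1<c_2<\cdots\to\infty$ such that $K\subset\{\rho<c_0\}$ lies in the neighbourhood where $f$ is holomorphic, each slab $\{c_j\le\rho\le c_{j+1}\}$ contains at most one critical point of $\rho$, and the sublevel sets $K_j=\{\rho\le c_j\}$ are Runge compacts with $\bigcup_j K_j=R$. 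Setting $f_0=f$ on $K_0$, the goal is to build holomorphic maps $f_j$ on neighbourhoods of $K_j$, each homotopic to $f|_{K_j}$ and approximating $f_{j-1}$ so closely on $K_{j-1}$ that $\sum_j\sup_{K_{j-1}}\dist_X(f_j,f_{j-1})<\infty$ with the first error $<\epsilon$ on $K$; a standard telescoping then yields a limit $F=\lim_j f_j$ holomorphic on all of $R$, homotopic to $f$, with $\sup_K\dist_X(F,f)<\epsilon$.

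The inductive step splits according to the topology of the slab. In the \emph{noncritical case}, where $K_{j+1}$ deformation retracts onto $K_j$, the collar $K_{j+1}\setminus\mathring K_j$ can be subdivided into finitely many convex bumps, i.e.\ discs attached to the current set along a proper boundary arc; applying the hypothesis once per bump (with geometrically small errors) extends $f_j$ to a holomorphic map on a neighbourhood of $K_{j+1}$. Because each $L=K\cup D$ retracts onto $K$, any sufficiently close extension is automatically homotopic to the previous map, so the homotopy class is preserved for free. An \emph{index-0} critical point only creates a new disc component $\Delta$ disjoint from $K_j$; there I would define the map on $\Delta$ by any holomorphic map into a coordinate chart that is homotopic to $f|_\Delta$, and then reduce to the noncritical case.

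The main obstacle is the \emph{index-1} critical point, where the topology genuinely changes: $K_{j+1}$ is $K_j$ with a handle whose core is an arc $E$ meeting $bK_j$ in its two endpoints. The key point — and the reason the ostensibly weaker hypothesis suffices — is that the disc-attachment hypothesis alone can never change homotopy type (each convex bump gives $L\simeq K$), so the handle cannot be produced by bumps; instead I would route the topology change through the \emph{totally real} core arc $E$. First extend $f_j$ continuously over $E$, choosing the extension in the homotopy class of $f$; the resulting map is holomorphic near $K_j$ and continuous on the totally real arc $E$, so $K_j\cup E$ is an admissible set. By the Mergelyan approximation theorem for manifold-valued maps on admissible sets — which holds for an \emph{arbitrary} complex manifold $X$ and does \emph{not} use the hypothesis, being local in the target (cover $E$ by subarcs mapping into charts, extend in each chart, and patch the $\C^n$-valued pieces by a small $\dibar$-correction) — this map is approximable by maps holomorphic on a neighbourhood of $K_j\cup E$. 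Since such a neighbourhood is diffeomorphic to $K_{j+1}$, a final noncritical bump step (again from the hypothesis) lifts the approximation up to $K_{j+1}$, completing the inductive step.

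Granting the convergence and homotopy bookkeeping above establishes the Oka-1 property with approximation, proving (a). For (b), I would enlarge the exhaustion so that each point $a_i$ lies in $\mathring K_{j(i)}\setminus K_{j(i)-1}$ for a unique index, and use the enhanced hypothesis (interpolation at a finite set) to force the bump extensions to agree with $f$ at the $a_i$ already present, while requiring all later bump and handle steps to interpolate at the finitely many previously fixed points as well; this keeps the interpolation conditions intact in the limit and gives approximation together with first-order interpolation. When jet interpolation is available in the hypothesis, the same scheme with $k_i$-jet conditions at each $a_i$ yields the full Oka-1 property. The delicate points throughout are the error control guaranteeing that $F=\lim_j f_j$ exists and is holomorphic, and the verification at the index-1 step that the admissible-set approximation can be carried out while simultaneously respecting the interpolation data; these are where I expect the real work to lie.
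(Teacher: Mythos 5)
Your proof of part (a) is correct and is essentially the paper's own argument (the proof of Theorem \ref{th:main2} together with Remark \ref{rem:weakOka1}): a Morse exhaustion whose noncritical slabs are filled by successive disc attachments supplied by the hypothesis, while critical points are crossed by extending over a totally real core arc and invoking Mergelyan approximation for manifold-valued maps on admissible sets (Theorem \ref{th:Mergelyan}), with a telescoping error estimate producing the limit map in the right homotopy class.

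Part (b) as written, however, has a genuine gap in how a new interpolation point acquires its prescribed value. You place each $a_i$ in a unique slab $\mathring K_{j(i)}\setminus K_{j(i)-1}$ and then rely on the enhanced hypothesis to make extensions agree with $f$ at the points ``already present''. But the hypothesis only permits interpolation at points of the set $K$ from which one extends; at stage $j(i)$, when $a_i$ first enters the compact set, $a_i$ lies in the attached disc $D$, not in $K$, so the hypothesis gives no control whatsoever on the value of the extension at $a_i$. All later steps then faithfully propagate a value that was never pinned to $f(a_i)$ in the first place. The missing step is the one the paper performs in Case 3 of the proof of Theorem \ref{th:main2}: before running the bump extensions over the slab containing $a_i$, attach to the current compact set a small closed disc $\Delta$ around $a_i$ on which the original map $f$ is holomorphic, joined by an embedded arc $\lambda$; declare the extension to equal $f$ on $\Delta$; and apply Mergelyan approximation with jet interpolation at $a_i$ (Theorem \ref{th:Mergelyan}) on the admissible set $K_{j(i)-1}\cup\lambda\cup\Delta$. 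Only after $a_i$ sits inside the (enlarged) compact set with the correct jet can the hypothesis's interpolation preserve it through the remaining bump and handle steps. With this correction --- which uses only the admissible-set Mergelyan tool you already employ at index-1 critical points --- your scheme for (b), including the jet version, goes through exactly as in the paper.
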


The conditions in the proposition obviously imply
the analogous conditions if $L\setminus \mathring K$ is a union of annuli.
Indeed, attaching an annulus along a boundary component of $K$ amounts to 
successively attaching a pair of discs. This is the noncritical case in the proof of 
Theorem \ref{th:main1} (see Case 1), which holds by Lemma \ref{lem:noncritical} 
if $X$ is densely dominable by tubes of lines.


%
%
\begin{remark}\label{rem:transversality}
If $X$ is a complex $n$-dimensional manifold and $E$ is a closed subset 
of $X$ with $\Hcal^{2n-2}(E)=0$, then a generic holomorphic map $f:M\to X$ 
from a compact bordered Riemann surface avoids $E$. 
Indeed, take a holomorphic submersion $F: M\times \B^N\to X$ 
as in \eqref{eq:F}, with $F(\cdotp,0)=f$, where $\B^N$ denotes the unit ball 
in $\C^N$. Since $\dim_\R  M\times \B^N=2N+2$,
the condition $\Hcal^{2n-2}(E)=0$ implies $\Hcal^{2N} (F^{-1}(E))=0$ by Fubini's theorem, 
and hence the set of parameters $t\in\B^N$ for which the range of the
map $f_t=F(\cdotp,t):M\to X$ intersects $E$ has zero $2N$-dimensional measure. 
%
%
Likewise, if $\Hcal^{2n-1}(E)=0$, the same argument shows that 
a generic holomorphic map $f:M\to X$ satisfies $f(bM)\cap E=\varnothing$,
and in this case $f$ can be chosen to agree with a given holomorphic
map $M\to X$ to a given finite order at finitely many given points of 
$\mathring M=M\setminus bM$. By using this argument inductively 
in the proof of Theorem \ref{th:main1} we obtain the following corollary.

\begin{corollary}\label{cor:thin}
Let $X$ be an Oka-1 manifold of dimension $n$. If $E$ is a closed subset of $X$
with $\Hcal^{2n-2}(E)=0$, then $X\setminus E$ is an Oka-1 manifold.
This holds in particular if $E$ is a closed complex subvariety of codimension 
at least two in $X$.
\end{corollary}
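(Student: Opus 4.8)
The plan is to verify, for the manifold $X\setminus E$, the hypotheses of Proposition \ref{prop:weakOka1}(b) in its jet-interpolation form, drawing on the Oka-1 property of the ambient manifold $X$ together with the general position argument recorded in Remark \ref{rem:transversality}. The final ``in particular'' assertion is immediate: a closed complex subvariety $E\subset X$ of codimension at least two has real Hausdorff dimension at most $2n-4$, so $\Hcal^{2n-2}(E)=0$, and it suffices to treat that case.

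First I would reduce to a single disc attachment. Let $R$ be an open Riemann surface, let $K\subset L=K\cup D$ be compact sets with piecewise smooth boundary such that $D$ is a disc attached to $K$ along a boundary arc, let $a_1,\ldots,a_m\in K$ be interpolation points with prescribed orders $k_j$, and let $f\colon K\to X\setminus E$ be holomorphic near $K$. Embed $L$ as a Runge compact set in an open Riemann surface $R'$ in which $K$ is Runge as well (a collar thickening of $L$ has this property, since $D$ is attached along an arc, so the free part of its boundary reaches the end of $R'$) and extend $f$ continuously over $R'$; since $R'$ is homotopy equivalent to a one-dimensional complex and $X$ is connected, no obstruction to such an extension arises. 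Applying Definition \ref{def:Oka1} to $X$ produces a holomorphic map $\hat f\colon R'\to X$ that approximates $f$ uniformly on $K$ and agrees with $f$ to order $k_j$ at each $a_j$. Its restriction to a neighbourhood of $L$ is a holomorphic map $\hat f\colon L\to X$ with the required approximation and jet interpolation, but whose image need not avoid $E$.

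The decisive step is to push $\hat f$ off $E$ by a small generic perturbation, as in Remark \ref{rem:transversality}. I would take a dominating holomorphic submersion $F\colon L\times\B^N\to X$ with $F(\cdot,0)=\hat f$ as in \eqref{eq:F}, modified so that its spray field is multiplied by a holomorphic function on $L$ vanishing to order $k_j+1$ at each $a_j$. Then $F(a_j,t)=\hat f(a_j)$ for every parameter $t$, the $k_j$-jet at $a_j$ is preserved, and $F$ remains a submersion on $(L\setminus\{a_1,\ldots,a_m\})\times\B^N$. Because the prescribed jets take values in $X\setminus E$, the points $a_j$ are mapped into $X\setminus E$ for all $t$. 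On the complement, the hypothesis $\Hcal^{2n-2}(E)=0$ yields $\Hcal^{2N}(F^{-1}(E))=0$ by Fubini's theorem, so the set of bad parameters $t\in\B^N$ for which $F(\cdot,t)$ meets $E$ has $2N$-dimensional measure zero. Choosing a good $t$ arbitrarily close to $0$, the map $\tilde f:=F(\cdot,t)\colon L\to X\setminus E$ still approximates $f$ on $K$ and retains the jets at the $a_j$. This verifies the hypotheses of Proposition \ref{prop:weakOka1}(b), and hence $X\setminus E$ is an Oka-1 manifold.

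The main point requiring care is the simultaneous compatibility of the three demands on the perturbation: the parameter $t$ must be close enough to $0$ to preserve the approximation on $K$, yet must lie in the full-measure good set so that the image avoids $E$ over all of $L$, and the spray must fix the prescribed jets without losing its dominating character away from the finitely many interpolation points. The estimate $\Hcal^{2N}(F^{-1}(E))=0$ from Remark \ref{rem:transversality} reconciles the first two, since the good parameters are dense near the origin; the vanishing prescription for the spray field handles the third, the values at the $a_j$ remaining in $X\setminus E$ automatically because the interpolation data do. Equivalently, one may insert this general position perturbation at each stage of the exhaustion used in the proof of Theorem \ref{th:main1}, arriving at the same conclusion.
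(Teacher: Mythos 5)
Your proposal is correct and follows essentially the same route as the paper: the paper's (very terse) proof consists precisely of inserting the dominating-spray/Fubini general position argument of Remark \ref{rem:transversality}, with the spray modified to fix the prescribed jets, into the inductive scheme behind Theorem \ref{th:main1}, which by Remark \ref{rem:weakOka1} amounts to verifying Proposition \ref{prop:weakOka1}(b) for $X\setminus E$ exactly as you do. The only extra ingredient in your write-up --- the auxiliary surface $R'$ obtained by thickening $L$, in which $K$ is Runge, so that the global Oka-1 property of $X$ (Definition \ref{def:Oka1}) can be invoked for a single disc attachment --- is sound and merely compensates for working one bump at a time rather than on all of $R$ at once, as you yourself note in the closing paragraph.
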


The hypothesis $\Hcal^{2n-2}(E)=0$ is optimal. Indeed, 
the corollary fails in general if $E$ is a complex hypersurface. 
For example, the complement in $\CP^n$ of $2n+1$ 
hyperplanes in general position is Kobayashi hyperbolic by 
Green's theorem \cite{Green1972}, and so is the complement of 
a very general hypersurface of sufficiently high degree; see Brotbek \cite{Brotbek2017}.
There is no analogue of Corollary \ref{cor:thin} for Oka manifolds where even the question
of removability of a point is an open problem, and closed discrete sets in $\C^n$
are not removable in general. 

The jet transversality theorem shows that a generic holomorphic map 
$M\to X$ from a compact bordered Riemann surface to an arbitrary complex
manifold $X$ is an immersion if $\dim X>1$ and an injective immersion if $\dim X>2$; 
see \cite[Section 8.8]{Forstneric2017E}. Using this fact inductively 
in the proof of Theorem \ref{th:main1} gives the following corollary. 

%
%
\begin{corollary}\label{cor:densecurves}
If $X$ is an Oka-1 manifold of dimension $n>1$ then holomorphic maps 
$F:R\to X$ in Definition \ref{def:Oka1} can be chosen to be immersions 
(injective immersions if $n>2$) if the interpolation conditions allow it.
If a complex manifold $X$ of dimension $>1$ has the Oka-1 property with 
approximation, then every open Riemann surface $R$ admits a holomorphic 
immersion $R\to X$ (injective immersion if $\dim X>2$) 
with everywhere dense image. 
\end{corollary}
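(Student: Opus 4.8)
Both claims are obtained by combining the Oka--1 approximation property (and, in (a), interpolation) with the jet transversality theorem for holomorphic maps from compact bordered Riemann surfaces, applied over a normal exhaustion of $R$. Fix such an exhaustion $\varnothing = M_0 \subset M_1 \subset M_2 \subset \cdots$ of $R$ by smoothly bounded Runge compact domains with $M_j \Subset \mathring M_{j+1}$ and $\bigcup_j M_j = R$. Recall the transversality input I will use (see \cite[Section 8.8]{Forstneric2017E} and Remark \ref{rem:transversality}): for any complex manifold $X$ and any holomorphic map $g$ from a neighbourhood of such an $M$ to $X$, the bundle $g^* TX$ is generated by finitely many holomorphic sections over a Stein neighbourhood of $M$ (Cartan's Theorem A), so $g$ embeds in a dominating holomorphic spray $G: M \times \B^N \to X$ with $G(\cdot, 0) = g$; for a generic $t \in \B^N$ the map $g_t = G(\cdot, t)$ is an immersion when $\dim X>1$ and an injective immersion — hence, $M$ being compact, an embedding — when $\dim X>2$, while $g_t \to g$ in $C^1(M, X)$ as $t\to0$; moreover $g_t$ may be required to match any prescribed finite jets of $g$ at finitely many points of $\mathring M$.

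\textbf{Proof of (a).} Let $F : R\to X$ be a map furnished by the Oka--1 property for the given data, so $F$ is homotopic to $f$ and satisfies (1)--(2) of Definition \ref{def:Oka1}. I post-process $F$ into an immersion by the transversality input applied over the exhaustion, keeping (1)--(2) and the homotopy class. Setting $F_0=F$, for $j\ge1$ I use a dominating spray over $M_j$ to replace $F_{j-1}$ by a holomorphic map $F_j$ that is an immersion (an embedding when $n>2$) on $M_j$, that agrees with $F$ to order $k_i$ at every $a_i\in M_j$ — this is where the proviso enters: the prescribed jets are assumed to be jets of an immersion, resp.\ compatible with injectivity — and with $\|F_j-F_{j-1}\|_{C^1(M_{j-1})}$ arbitrarily small. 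Since being an immersion, resp.\ an embedding, of a \emph{compact} domain is an open condition in $C^1(M_{j-1},X)$, I calibrate the successive errors so small — below the openness radius for $F_j$ on $M_{j-1}$, and so that the total change on $K$ keeps $\sup_K\dist_X(\,\cdot\,,f)<\epsilon$ — that every later $F_m$ stays an immersion (embedding) on $M_{j-1}$. The limit $\tilde F=\lim_j F_j$ is then holomorphic on $R$, an immersion on each $M_j$ and hence on $R$, and (when $n>2$) injective on each $M_j$ and hence on $R$, while still satisfying (1)--(2) and homotopic to $f$.

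\textbf{Proof of (b).} Only approximation is available, so I generate the dense image during the construction. Fix a dense sequence $\{y_k\}\subset X$ and a sequence $(x_m)$ in which each $y_k$ occurs infinitely often, and fix $\epsilon_m\searrow0$ with $\sum_m\epsilon_m<\infty$. Start from any holomorphic $F_1$ near $M_1$. At step $m\to m+1$, choose $q_m\in\mathring M_{m+1}\setminus M_m$ and a small closed disc $\bar D_m\ni q_m$, disjoint from $M_m$, placed in an unbounded complementary component of $M_m$ so that $K'_m:=M_m\cup\bar D_m\subset\mathring M_{m+1}$ is Runge; define a continuous $g_m:R\to X$, holomorphic near $K'_m$, with $g_m=F_m$ near $M_m$ and $g_m\equiv x_m$ on $\bar D_m$ (using connectedness of $X$ to interpolate continuously in between). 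The Oka--1 approximation property yields a holomorphic $\hat F_{m+1}:R\to X$ that is $\delta_{m+1}$-close to $g_m$ on $K'_m$; in particular $\dist_X(\hat F_{m+1}(q_m),x_m)<\delta_{m+1}$ and $\hat F_{m+1}$ is $\delta_{m+1}$-close to $F_m$ on $M_m$. Perturbing $\hat F_{m+1}$ by the transversality input on $M_{m+1}$ gives $F_{m+1}$, an immersion (embedding when $n>2$) on $M_{m+1}$, $C^1$-close to $\hat F_{m+1}$. Choosing $\delta_{m+1}$ and the spray parameter small enough forces $\sup_{M_m}\dist_X(F_{m+1},F_m)<\epsilon_{m+1}$, keeps $F_{m+1}$ an immersion (embedding) on $M_m$, and keeps $\dist_X(F_{m+1}(q_m),x_m)<\epsilon_{m+1}$. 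As in (a), $F=\lim_m F_m$ is a holomorphic immersion (injective immersion when $n>2$) on $R$. Finally, once the tail satisfies $\sum_{i>m+1}\epsilon_i<\epsilon_{m+1}$ one has $\dist_X(F(q_m),x_m)<2\epsilon_{m+1}$; since each $y_k$ equals $x_m$ for infinitely many $m$ and $\epsilon_m\to0$, the image $F(R)$ accumulates at every $y_k$ and is therefore dense.

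\textbf{Main obstacle.} The substance lies not in any single perturbation but in the bookkeeping that makes approximation, interpolation (in (a)), the immersion/embedding property, and the approach to the dense targets (in (b)) \emph{all} survive in the non-compact limit. As immersion and embedding are only $C^1$-open on \emph{compact} domains, I must upgrade the uniform $C^0$ convergence to $C^1$ convergence on each $M_j$ via Cauchy estimates on $M_j\Subset\mathring M_{j+1}$, and at each stage calibrate the error against the (a priori unknown, but positive) $C^1$-openness radius of the immersion/embedding condition for the map just produced. Global injectivity on $R$ is never established directly; it is forced by the fact that $\tilde F$ (resp.\ $F$) is an embedding on every $M_j$ together with the observation that any two points of $R$ eventually lie in a common $M_j$. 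The only place where the hypothesis genuinely constrains the conclusion is the compatibility, in (a), of the jet transversality step with the prescribed interpolation jets — precisely the meaning of the proviso ``if the interpolation conditions allow it''.
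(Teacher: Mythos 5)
Your part (b) is essentially sound: alternating the Oka-1 approximation step (which always returns a map defined on all of $R$) with a jet-transversality perturbation over the current compact piece, converting $C^0$-closeness on $M_{m+1}$ into $C^1$-closeness on $M_m$ by Cauchy estimates, and calibrating errors against the $C^1$-openness radius of the immersion/embedding condition, is in substance the argument the paper intends. The paper's own (very terse) proof does the same thing in a different place: it inserts the generic perturbation at each step of the inductive construction in the proof of Theorem \ref{th:main2}, where Lemma \ref{lem:noncritical} supplies the extension of the map to the next sublevel set of the exhaustion; your (b) instead re-invokes the Oka-1 approximation property to re-globalize at each step, which is an equally valid packaging.

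Part (a), however, has a genuine gap: the iteration is not defined beyond its first step. Your transversality input takes a core map defined on a neighbourhood of $M_j$ and returns perturbed maps $G(\cdot,t)$ defined only on (a neighbourhood of) $M_j$. Thus $F_1$ is defined only near $M_1$, and it cannot serve as the core of a dominating spray over $M_2$; nothing in (a) re-extends the perturbed map to a neighbourhood of the next compact set, and spray perturbations by themselves never enlarge the domain of definition. So the scheme ``replace $F_{j-1}$ by $F_j$ via a spray over $M_j$'' collapses at $j=2$, and with it the limit argument (including the claimed homotopies $F_j\simeq F_{j-1}$, which compare maps with different domains). This is exactly why the paper does not post-process the final map $F$ but interleaves the perturbation with the extension mechanism inside the proof of Theorem \ref{th:main2}. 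The cheapest repair of your (a) is the device you already use in (b): after perturbing over $M_j$, extend the resulting map continuously to $R$ and apply the full Oka-1 property of Definition \ref{def:Oka1} (approximation on $M_j$ together with jet interpolation at the points $a_i$, which preserves condition (2) and, the jets being fixed and immersive by hypothesis, preserves immersivity at those points); the output is again defined on all of $R$ and, by Cauchy estimates, $C^1$-close to the perturbed map on $M_{j-1}$, so your calibration goes through and the limit exists, is an immersion (embedding on each $M_j$, hence injective, when $n>2$), and still satisfies (1)--(2). With that modification your proof is complete; as written, the induction in (a) cannot be carried out.
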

\end{remark}

Corollary \ref{cor:densecurves} clearly fails if the manifold $X$ is Brody hyperbolic. 
On the other hand, it was shown by Forstneri\v c and Winkelmann \cite{ForstnericWinkelmann2005MRL}
that every connected complex manifold $X$ admits a holomorphic disc $\D \to X$ 
hitting any given sequence in $X$, so there are
holomorphic discs in $X$ with everywhere dense images. In Section \ref{sec:dense} 
we generalize this to maps from any bordered Riemann surface
(see Theorem \ref{th:dense}), and also from some other open Riemann surfaces 
with more complicated topology (see Corollary \ref{co:dense-2}).

Further examples of Oka-1 manifolds are discussed in Section \ref{sec:surfaces} 
where we focus on complex surfaces. Note however that a compact complex manifold 
may be dominable by tubes of lines but not contain any rational curves. 
Examples include tori $\C^n/\Gamma$, where $\Gamma$ 
is a discrete group of translations on $\C^n$. 
These are complex homogeneous manifolds and
hence Oka manifolds (see \cite[Proposition 5.6.1]{Forstneric2017E} due to 
Grauert \cite{Grauert1957I}). By Corollary \ref{cor:Cn} and 
Proposition \ref{prop:coverings}, a torus contains many
domains which are dominable by tubes of lines. 
Another such example are Hopf manifolds.
Every Hopf manifold is an unramified quotient of $\C^n\setminus\{0\}$ $(n>1)$ by 
a cyclic group, so it is an Oka manifold \cite[Corollary 5.6.11]{Forstneric2017E}. 
Like tori, Hopf surfaces contain many Oka-1 domains. 

%
%
%
\begin{problem}\label{prob:dominability}
Let $T\subset \C^n$ be a spanning tube of lines for some $n\ge 2$
(see Definition \ref{def:tree}). 
\begin{enumerate}[\rm (a)] 
\item Does there exist a dominating holomorphic map $\C^n\to T$? 
\item Is $T$ an Oka manifold?
\item Is there a compact Oka-1 manifold which is not an Oka manifold?
\end{enumerate}
\end{problem}

We expect that the answers to questions (a) and (b) are negative, while the answer to (c) is 
positive in dimension $>1$. Natural candidates may be the K3 surfaces.

%
%
\begin{remark}\label{rem:extendedtube}
Consider $\CP^1$ with the coordinate $z\in \C\cup\{\infty\}$. 
We claim that the extended (spanning) tube of rational curves 
\[
	\wt T=\big\{(z,w)\in \CP^1 \times \CP^1: |z|<1\ \text{or}\ |w|<1\big\}
\]
is an Oka surface, and hence by \cite[Theorem 5.5.1 (e)]{Forstneric2017E} 
there is a surjective holomorphic map $\C^2\to \wt T$.
Indeed, the complement of the tree of rational curves $\Lambda=\{z=0\}\cup\{w=0\}$ 
in $X=\CP^1 \times \CP^1$ equals $\C^2$ with the complex coordinates $(1/z,1/w)$, 
and $K=X\setminus \wt T$ is the closed bidisc $\{|1/z|\le1,\ |1/w|\le 1\}$,
which is polynomially convex in $X\setminus \Lambda\cong \C^2$. Hence, 
\[
	\wt T\setminus \Lambda = (X\setminus K)\setminus \Lambda
	= (X\setminus \Lambda)\setminus K =\C^2\setminus K
\]
is an Oka surface by Kusakabe's theorem 
\cite[Theorem 1.2 and Corollary 1.3]{Kusakabe2024AM}
(see also \cite{ForstnericWold2020MRL}). The same argument applies 
to any translation of the tree $\Lambda$ within the tube $\wt T$. Clearly, there are 
translates $\Lambda_2,\Lambda_3\subset \wt  T$ of $\Lambda=\Lambda_1$
with $\bigcap_{i=1}^3 \Lambda_i=\varnothing$. 
Hence, $\wt T= \bigcup_{i=1}^3 \wt T\setminus \Lambda_i$
is a union of Zariski open Oka domains $\wt T\setminus \Lambda_i$, 
so it is an Oka manifold by Kusakabe's localization theorem 
\cite[Theorem 1.4]{Kusakabe2021IUMJ}. 
This argument clearly fails in dimensions $n>2$.
\end{remark}

%
%
Let us say a few words about the proof of Theorem \ref{th:main1}.

In Section \ref{sec:trees} we collect some basic definitions and observations concerning trees 
and tubes of complex lines in affine spaces $\C^n$. 

In Section \ref{sec:bump} we obtain the first main lemma used in the proof 
(see Lemma \ref{lem:bump}), which pertains to the 
situation described in Proposition \ref{prop:weakOka1}. More precisely,
let $K$ be a compact domain with piecewise smooth boundary in an open Riemann 
surface $R$, and let $D\subset R$ be a compact disc attached to $K$
along a boundary arc $\alpha = K\cap D = bK\cap bD$ such that the set 
$L=K\cup D$ has piecewise smooth boundary. Given a spanning
tube of lines $T\subset \C^n$ and a holomorphic map $f:K\to \C^n$ 
such that $f(\alpha)\subset T$, we show that $f$ can be approximated
uniformly on $K$ by holomorphic maps $\tilde f:L\to \C^n$ such that 
$\tilde f(D)\subset T$. The analogous result holds for local holomorphic 
sprays of maps $K\to \C^n$ sending $\alpha$ to $T$;  see Remark \ref{rem:bump}. 

In Section \ref{sec:noncritical} we use Lemma \ref{lem:bump} 
and methods from Oka theory to prove Lemma \ref{lem:noncritical}, 
which provides the noncritical case in the proof of Theorem \ref{th:main1}. 
With $K\subset R$ as above, this concerns the approximation of a 
holomorphic map $f:K\to X$ by holomorphic maps $\tilde f:L\to X$, 
where $L\Supset K$ is a compact set with piecewise smooth boundary 
such that $L\setminus \mathring K$ is the union of finitely many pairwise 
disjoint compact annuli. In addition, $\tilde f$ can be chosen to agree 
with $f$ to a given finite order at a given finite set of points in $K$. 

Using Lemma \ref{lem:noncritical}, we obtain Theorem \ref{th:main1} in 
Section \ref{sec:proof} as a special case of Theorem  \ref{th:main2}.

In the paper, we shall frequently use the Mergelyan approximation theorem 
on compact sets in Riemann surfaces with interpolation at finitely many 
points, both for functions and for manifold-valued maps. 
We recall the relevant notions and terminology.
Given a compact set $K$ in a complex manifold $R$, 
a map $f:K\to X$ to another
complex manifold is said to be of class $\Ascr(K)$ if $f$ is continuous on $K$ 
and holomorphic on the interior $\mathring K$ of $K$. A map is 
said to be holomorphic on $K$ if it is holomorphic on a neighbourhood 
of $K$; this class is denoted by $\Oscr(K)$.
A compact set $K$ in $R$ is said to have the {\em Mergelyan property} 
if every function $f\in \Ascr(K)$ is a uniform limit on $K$ of functions in 
$\Oscr(K)$. If $R$ is an open Riemann surface then this holds in particular 
if $K$ is Runge in $R$ (see \cite[Corollary 7]{FornaessForstnericWold2020}). 
The following result, which we state for reader's convenience,
is \cite[Theorem 1.13.1]{AlarconForstnericLopez2021}; see also 
\cite[Sec.\ 7.2]{FornaessForstnericWold2020}.

%
%
\begin{theorem}\label{th:Mergelyan}
Assume that $K$ is a compact set with the Mergelyan property
in a Riemann surface $R$. Given a complex manifold $X$,
a map $f:K\to X$ of class $\Ascr(K)$, and finitely many points $a_i\in K$ 
$(i=1,\ldots,m)$, we can approximate $f$ as closely as desired uniformly
on $K$ by holomorphic maps $\tilde f:U\to X$ on open neighbourhoods
$U\subset R$ of $K$ such that $\tilde f(a_i)=f(a_i)$ for every
$i=1,\ldots,m$. In addition, $\tilde f$ can be chosen to agree with $f$ 
to any given finite order in the points $a_i\in \mathring K=K\setminus bK$.
\end{theorem}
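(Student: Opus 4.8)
The plan is to prove the theorem by reducing the manifold-valued problem to the scalar Mergelyan property assumed of $K$, via a covering-and-gluing scheme, and then to obtain the interpolation by a correction performed with a local holomorphic spray. First I would reduce to a local problem. Since $f(K)$ is compact in $X$, I can cover it by finitely many holomorphic coordinate charts $\psi_j:V_j\to\C^n$ with $n=\dim X$. Cutting $K$ along finitely many short smooth arcs, I arrange a subdivision into compact pieces $K_1,\dots,K_\ell$, each again with the Mergelyan property (achievable on a Riemann surface), such that $f(K_t)\subset V_{\sigma(t)}$ for some index $\sigma(t)$. On each piece, $\psi_{\sigma(t)}\circ f|_{K_t}:K_t\to\C^n$ is of class $\Ascr(K_t)$, so applying the scalar Mergelyan property to each of its $n$ components and composing back with $\psi_{\sigma(t)}^{-1}$ yields a holomorphic map $f_t:U_t\to V_{\sigma(t)}\subset X$ that is as close to $f$ as desired uniformly on $K_t$.

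The central step is to glue these local holomorphic approximations into a single holomorphic map near $K$. On a nonempty overlap $K_s\cap K_t$ the maps $f_s$ and $f_t$ are both uniformly close to $f$, hence close to each other and valued in a common chart, so their difference defines a small holomorphic transition. I would invoke a gluing lemma of Cartan type: split this transition additively by solving a linear Cousin-I, equivalently a $\dibar$, problem on the one-dimensional domain (elementary on a Riemann surface), and recombine using a fixed local holomorphic retraction of a neighbourhood of the diagonal in $X\times X$ onto $X$. Iterating the splitting and recombination over the finitely many pieces with geometrically decreasing error bounds produces a single holomorphic map $\tilde f$ on a neighbourhood $U\supset K$ that is as close to $f$ as desired on $K$.

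For the interpolation I would record that over the compact set $K$ one can assemble a dominating holomorphic spray $F:K\times\B^N\to X$ with $F(\,\cdot\,,0)=f$ such that $t\mapsto F(p,t)$ is a submersion onto a neighbourhood of $f(p)$ for each $p$; this is built from finitely many local sprays in coordinate charts, again without obstruction because the base is one-dimensional. Given the unrestricted approximation $\tilde f$, I would seek the interpolating map in the form $p\mapsto F(p,\xi(p))$ with $\xi:U\to\B^N$ small and holomorphic. By submersivity of $F$ in the fibre variable, the requirement that the output agree with $f$ at each $a_i$, and agree to order $k_i$ at the interior points, translates into prescribing finitely many jets of $\xi$ at the $a_i$; since $\xi$ must in addition be uniformly small on $K$, this is precisely a scalar Mergelyan approximation problem with finite jet interpolation, solved by the standard enhancement (subtract a holomorphic map carrying the prescribed jets, approximate the remainder, add it back). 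Choosing $\xi$ small enough preserves uniform closeness to $f$ on $K$.

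The hard part will be the gluing step, where one must control the recombination of the local approximations so that the accumulated error stays small while each correction respects the nonlinear structure of $X$. The remedy I expect to use is the combination of a Cartan-type additive splitting on the Riemann surface with the fixed retraction near the diagonal, iterated with rapidly decreasing bounds; the one-dimensionality of the base keeps all cohomological obstructions trivial, so no deep Oka-theoretic input is required beyond the scalar Mergelyan property assumed of $K$.
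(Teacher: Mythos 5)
First, note that the paper itself contains no proof of Theorem \ref{th:Mergelyan}: it is quoted ``for reader's convenience'' from \cite[Theorem 1.13.1]{AlarconForstnericLopez2021} (see also \cite[Sect.~7.2]{FornaessForstnericWold2020}), so your proposal can only be measured against the proofs in those references, whose general architecture (localize into charts, invoke the scalar Mergelyan hypothesis, glue, then fix finitely many jets with a spray) your outline does share. The trouble is that both of your load-bearing steps have genuine gaps. The first is the subdivision: you assert, parenthetically, that $K$ can be cut along arcs into pieces $K_1,\dots,K_\ell$ that \emph{again have the Mergelyan property}. This is precisely the hard point of the whole theorem, and nothing in your argument delivers it. The functions you must approximate on a piece, $\psi_{\sigma(t)}\circ f|_{K_t}$, are not restrictions of functions in $\Ascr(K)$ --- the composition is meaningless on the rest of $K$ --- so the assumed Mergelyan property of $K$ cannot be applied to them; and whether an intersection $K\cap \overline D$ with a closed disc inherits the Mergelyan property from $K$ is a nontrivial question of rational approximation theory. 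Even in the plane the only known route is through Vitushkin's capacitary characterization and localization operators, and on a Riemann surface one needs the corresponding Bishop--Vitushkin localization theory; this is exactly where the cited proofs spend their effort, and it cannot be waved through as ``achievable on a Riemann surface''.

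The second gap is the recombination device. A holomorphic retraction of a neighbourhood of the diagonal of $X\times X$ onto the diagonal exists trivially (the first-factor projection), but it is useless for gluing; what your scheme actually requires is a holomorphic ``addition'' $s(x,v)\in X$, defined for $x$ near $f(K)$ and small $v\in\C^n$, with $s(x,0)=x$ and $\partial_v s(x,0)$ an isomorphism, so that the split Cousin corrections can be applied to $f_s$ \emph{away from the overlaps}, where no single chart contains its image. Such a target-side structure does not exist on a general complex manifold: its linearization $\xi_j(x)=\partial_{v_j}s(x,0)$ is a finite family of holomorphic vector fields spanning $TX$ on a neighbourhood of $f(K)$, and these can fail to exist. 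For instance, let $X$ be an elliptic K3 surface and let $f(K)$ be a smooth fibre $C$ over a point where the $j$-invariant is nonconstant (such $f$ exist: restrict the universal covering $\C\to C$ to a large closed disc); since K3 surfaces carry no global holomorphic vector fields and nearby fibres are non-isomorphic, every holomorphic vector field on a small neighbourhood of $C$ is tangent to the fibres, so no finite family spans $TX$. The standard repair places the spray over the \emph{source} instead of the target: each local approximation $f_t$ is holomorphic on a Stein neighbourhood of $K_t$, so its graph has a Stein neighbourhood in $R\times X$ carrying fibrewise vector fields whose flows yield a dominating spray with core $f_t$ \cite[Lemma 5.10.4]{Forstneric2017E}; transitions on overlaps then come from the implicit function theorem, and the gluing is done by the nonlinear Cartan-type splitting \cite[Propositions 5.8.1 and 5.9.2]{Forstneric2017E} --- the very mechanism this paper runs in the proof of Lemma \ref{lem:noncritical} and Remark \ref{rem:bump}. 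The same correction is needed in your interpolation step: the dominating spray must have core the holomorphic approximation $\tilde f$, not $f$ itself, since a map merely of class $\Ascr(K)$ cannot be the core of a holomorphic spray; once that is fixed, your reduction of the jet conditions to a small scalar interpolation problem via the implicit function theorem and Cauchy estimates is sound. With these two repairs your outline essentially becomes the proof in the cited sources; as written, both the localization and the recombination rest on tools that are not available.
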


%
%
%
%
\section{Trees and tubes of complex lines}\label{sec:trees} 

The notions of a tree and a tube of (affine complex) lines in $\C^n$ was introduced in 
Definition \ref{def:tree}. In this section we collect some observations which 
will be used in the sequel.

We can enumerate the branches $\Lambda_i$ of a tree 
$\Lambda=\bigcup_{i=1}^{k}\Lambda_i$ so that for each 
$i\ge 2$, the branch $\Lambda_i$ intersects the subtree 
$\Lambda^{i-1}=\bigcup_{j=1}^{i-1}\Lambda_j$ in a single point.
The intersections are transverse (normal crossings) 
due to linear independence of direction vectors of the branches. 
Several branches may intersect at the same point; 
we call $\Lambda$ a {\em regular tree} if this does not happen.
Note that a tree with $k$ branches is regular if and only of it has exactly 
$k-1$ singular points (simple nodes). Our definition of a tree of lines is 
similar to that of a {\em tree of rational curves} in a complex manifold $X$
(see \cite[Definition 4.23]{Debarre2001} or \cite{Kollar1995E}). However, an 
addition which is important in the proof of Theorem \ref{th:main1} is that the direction vectors 
of the branches of a tree of lines are linearly independent, and they are a
basis of $\C^n$ if and only if the tree is spanning.

By a linear change of coordinates on $\C^n$ we can map the direction vectors
$v_1,\ldots,v_k$ of a tree $\Lambda$ to the first $k$ standard unit vectors
$e_1,\ldots, e_k$, where $e_i=(0,\ldots,1,\ldots,0)$ with $1$ on the $i$-th spot.
Hence, it will suffice to consider trees of lines in coordinate directions.

%
%
\begin{example}\label{ex:simpletree}
Let $z=(z_1,\ldots,z_n)$ be complex coordinates on $\C^n$.
For each $i=1,\ldots,n$ let
\begin{equation}\label{eq:axes}
	\Lambda_i=\{z\in\C^n: z_j=0\ \text{for all}\ j\in \{1,\ldots,n\}\setminus\{i\}\}
	= \C e_i
\end{equation}
be the coordinate axis in the $z_i$ direction.
\begin{enumerate} [\rm (a)]
\item The union $\Lambda=\bigcup_{i=1}^k \Lambda_i$ of coordinate axes 
is a tree. It is a spanning tree if and only if $k=n$, and is a regular tree if and only if $n=2$.
\item 
Let $a_1,\ldots, a_{k}\in\C$ for $1\le k<n$ be complex numbers. The set
%
%
\begin{equation}\label{eq:simpletree}
	\Lambda= \Lambda_n \cup\, \bigcup_{i=1}^{k} \big(a_i e_n+\Lambda_i\big) 
\end{equation}
is called a {\em simple tree} or a {\em comb}, and $\Lambda_n$ is the {\em stem} 
(or the {\em handle}) of $\Lambda$. It is spanning if and only if $k=n-1$, 
and is regular if and only if the numbers $a_i$ are pairwise distinct. 
Every tree of length $\le 3$ is a simple tree in suitable affine coordinates, 
but most trees of length $>3$ are not simple.
\end{enumerate}
\end{example}



%
%
\begin{lemma}\label{lem:standard}
For every tree of lines in $\C^n$ there is an affine change of coordinates
which maps it to a tree of the form
\begin{equation}\label{eq:normal}
	\Lambda = \C e_n \cup \bigcup_{j=1}^l \Lambda^j,	
\end{equation}
where each $\Lambda^j$ is a tree in coordinate directions such that 
$\Lambda^j\cap \,\C e_n = a_j e_n$ for some numbers $a_1,\ldots, a_l\in\C$
(not necessarily distinct).
A tree \eqref{eq:normal} is said to be in {\em normal form}.
\end{lemma}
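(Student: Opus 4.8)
The plan is to split the construction into a linear-algebraic step that normalizes the directions and places one chosen branch onto $\C e_n$, followed by a combinatorial step that organizes the remaining branches into subtrees. The only nontrivial point is that each such subtree meets $\C e_n$ in a single point, and I will derive this from the linear independence of the direction vectors.

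First I would single out one branch to play the role of the stem; after relabelling, say this is $\Lambda_k$, with direction vector $v_k$. Because $v_1,\dots,v_k$ are linearly independent, I can complete them to a basis of $\C^n$ and choose a linear automorphism $A$ of $\C^n$ with $Av_k=e_n$ and with $Av_1,\dots,Av_{k-1}$ equal to distinct vectors among $e_1,\dots,e_{n-1}$. Then every $A\Lambda_i$ is parallel to a coordinate axis, and $A\Lambda_k=p+\C e_n$ for some $p\in\C^n$. Composing $A$ with the translation $z\mapsto z-p$ gives an affine automorphism $\Phi$ with $\Phi(\Lambda_k)=\C e_n$, and since translations preserve directions, all the other branches $\Phi(\Lambda_i)$ remain parallel to coordinate axes. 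Renaming $\Phi(\Lambda)$ as $\Lambda$, I may thus assume from now on that the stem is $\C e_n$ and that each branch lies in a coordinate direction.

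Next I would form $Y=\bigcup_{i\ne k}\Lambda_i$ and let $\Lambda^1,\dots,\Lambda^l$ be its connected components. Each $\Lambda^j$ is a connected union of branches whose direction vectors are distinct coordinate vectors among $e_1,\dots,e_{n-1}$, hence a tree of lines in coordinate directions. Since $\Lambda$ is connected and $\C e_n\ne\varnothing$, every component $\Lambda^j$ must meet the stem, so the statement reduces to showing that the intersection $\Lambda^j\cap\C e_n$ consists of a single point $a_j e_n$; the $a_j$ need not be distinct, as two components may attach at the same point, and the degenerate case $k=1$ (where $l=0$ and $\Lambda=\C e_n$) is trivial.

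The step I expect to be the main obstacle is exactly this single-point property, which is an acyclicity statement for trees of lines. Suppose, for contradiction, that some component $C=\Lambda^j$ met $\C e_n$ in two distinct points $P=ae_n$ and $P'=be_n$. Since a non-stem branch has direction independent of $e_n$ and hence meets the stem in at most one point, $P$ and $P'$ lie on two distinct branches $\Lambda_{j_0},\Lambda_{j_r}\subset C$; choosing a simple chain $\Lambda_{j_0},\Lambda_{j_1},\dots,\Lambda_{j_r}$ in $C$ with $\Lambda_{j_s}\cap\Lambda_{j_{s+1}}\ne\varnothing$ (possible since $C$ is connected) and closing it through the stem produces a cyclic sequence of pairwise distinct branches $\C e_n,\Lambda_{j_0},\dots,\Lambda_{j_r}$. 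Labelling the consecutive intersection points $x_0,\dots,x_{r+1}$ around this loop, each difference $x_t-x_{t-1}$ lies in the direction of the branch shared by $x_{t-1}$ and $x_t$, so the telescoping identity $\sum_t (x_t-x_{t-1})=0$ becomes a linear relation $(a-b)e_n+\sum_{t\ge1}c_t w_t=0$, where $w_t$ is the coordinate direction vector of $\Lambda_{j_{t-1}}$ and the $e_n$-coefficient $a-b$ is nonzero because $P\ne P'$. As $e_n,w_1,\dots,w_{r+1}$ are distinct coordinate vectors, hence linearly independent, the relation is impossible. Therefore each $\Lambda^j$ meets $\C e_n$ in a single point, and $\Lambda=\C e_n\cup\bigcup_{j=1}^l\Lambda^j$ is in normal form. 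Note that, pleasantly, the argument is insensitive to whether intermediate intersection points coincide (the concurrent, non-regular case), since only the nonvanishing of the $e_n$-coefficient is used.
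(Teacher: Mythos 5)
Your proof is correct and follows the same route as the paper's own (very terse) proof: pick a branch, apply an affine transformation sending its direction vector to $e_n$ and the remaining direction vectors to coordinate vectors, and read off the subtrees attached to the stem $\C e_n$. The paper essentially stops there and asserts the normal form, so your cycle-telescoping argument — showing via linear independence of $e_n, w_1,\dots,w_{r+1}$ that each connected component of the non-stem branches meets $\C e_n$ in exactly one point — is a rigorous justification of the acyclicity property that the paper (both here and in its unproved enumeration claim at the start of Section~\ref{sec:trees}) treats as evident.
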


\begin{proof}
Pick any branch of a given tree and map it to $\C e_n$ by an affine 
transformation which maps the direction vectors of the branches to 
coordinate vectors. Let $a_1,\ldots, a_l\in\C$ be such that $a_i e_n$ 
are the singular points of the new tree $\Lambda$. Then, $\Lambda$ satisfies the lemma.
\end{proof}

Note that an affine linear transformation of $\C^n$ maps a tree of lines $\Lambda$
to another tree of lines $\Lambda'$, and it maps a tube $T$ around $\Lambda$ 
to a tube $T'$ around $\Lambda'$. 

It will be convenient to use {\em polydisc tubes}. Let $\Delta^k\subset \C^k$ 
denote the unit polydisc. The polydisc tube of radius $r>0$ around the coordinate axis
$\Lambda_n=\C e_n$ is defined by
\[
	\Tscr_r(\Lambda_n) = \{z=(z',z_n)\in \C^{n-1}\times\C: z'\in r\Delta^{n-1}\}.
\]
For any affine complex line $\Lambda\subset \C^n$ there is an affine unitary 
change of coordinates $U:\C^n\to \C^n$ mapping $\Lambda$ onto $\Lambda_n$,
and we take $\Tscr_r(\Lambda) = U^{-1}(\Tscr_r(\Lambda_n))$. 
If $\Lambda$ is a tree of lines in the normal form \eqref{eq:normal}, 
then the polydisc tube $\Tscr_r(\Lambda)$ is defined to be the union of polydisc 
tubes of the same radius $r$ around its branches. 

%
%
\begin{remark}
One can consider trees of lines in $\C^n$ having more than $n$ branches. However,
examples show that a spanning tree with more than $n$ branches need not contain 
a spanning tree with $n$ branches, such as those considered above. Our proof of
Theorem \ref{th:main1} does not apply if we assume dominability by spanning trees with 
more than $n$ branches. 
\end{remark}

%
%
%
%
\section{Extending a holomorphic map across a bump taking values in a tube}\label{sec:bump} 

The following lemma will be used in the proof of Theorem \ref{th:main1}. 

\begin{lemma}\label{lem:bump}
Assume that $K$ is a compact domain with piecewise smooth boundary in an open 
Riemann surface $R$, and $D$ is a compact topological disc with piecewise smooth 
boundary in $R$ such that $\alpha=D\cap K=bK\cap bD$ is an arc 
and the compact set $L=K\cup D$ has piecewise smooth boundary. 
Let $f=(f_1,\ldots,f_n):K\to \C^n$ be a map of class $\Ascr(K)$ and 
$T\subset \C^n$ be a spanning tube of lines such that $f(\alpha)\subset T$.
Then we can approximate $f$ as closely as desired uniformly on $K$ and interpolate 
it to any given finite order at a given finite set of points in $\mathring K$ by holomorphic maps 
$\tilde f:K\cup D\to\C^n$ such that $\tilde f(D)\subset T$.
\end{lemma}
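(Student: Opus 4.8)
The plan is to reduce the statement to a one-dimensional approximation-and-gluing problem by exploiting the product structure of a tube of lines. First I would invoke Lemma \ref{lem:standard} together with the affine invariance of tubes to assume that $\Lambda$ is in normal form, so that its branches point in coordinate directions; lying in the tube $T$ near the branch of direction $e_i$ then means that the $n-1$ coordinates other than the $i$-th are confined to a fixed small disc, while the $i$-th coordinate is free. I would also first replace $f$ by a map in $\Oscr(K)$ close to it on $K$ via Theorem \ref{th:Mergelyan}, which keeps $f(\alpha)\subset T$ because $T$ is open.

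Next, since $f(\alpha)$ is a compact connected subset of the union of branch-tubes $T$, I would subdivide $\alpha$ into finitely many subarcs on each of which $f$ takes values in the tube around a single branch, and correspondingly split $D$ into sub-discs $D_1,\dots,D_m$ attached along these subarcs. Extending across one $D_l$ at a time, with the already-extended part adjoined to $K$, reduces everything to the case where $f(\alpha)$ lies in the tube around a single branch, say the one in direction $e_n$, so that $|f_j|<r$ on $\alpha$ for every $j<n$. On the disc $D$ I would then build a model map $g=(g_1,\dots,g_n)\in\Oscr(\overline D)$ with $g(\overline D)\subset T$ and $g\approx f$ on $\alpha$: take $g_n$ to be any holomorphic approximant of $f_n|_\alpha$, and choose the constrained coordinates $g_j$ $(j<n)$ holomorphic on $\overline D$, close to $f_j$ on $\alpha$, and of modulus $<r$ throughout $D$.

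With the model map $g$ in hand, I would glue $f$ on $K$ to $g$ on $D$. As the target is the vector space $\C^n$, this is the additive Cartan splitting for the (special) Cartan pair $(K,D)$: writing the difference $g-f$, which is defined and small near $\alpha$, as $b_K-b_D$ with $b_K,b_D$ holomorphic and small near $K$ and $D$ respectively, the maps $f+b_K$ and $g+b_D$ agree near $\alpha$ and define a single $\tilde f\in\Oscr(L)$ with $\tilde f\approx f$ on $K$ and $\tilde f\approx g$ on $D$, whence $\tilde f(D)\subset T$; see \cite[Chapter 5]{Forstneric2017E}. The finite-order interpolation at the prescribed points of $\mathring K$ costs nothing, since those points lie in the interior of $K$ away from $\alpha$: I would arrange $b_K$ to vanish there to the required order, or simply invoke the jet interpolation of Theorem \ref{th:Mergelyan} at the final approximation.

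The hard part will be the single-branch step: producing the constrained coordinates $g_j$ $(j<n)$ that match $f_j$ on the arc $\alpha$ and yet stay of modulus $<r$ on all of $D$. One cannot prescribe the boundary values of a disc-algebra function on the positive-measure arc $\alpha$ and independently control them on the rest of $bD$, so exact matching is impossible; it is precisely the freedom to match only approximately, together with the fact that $\alpha\subsetneq bD$ is a proper subarc, that makes this feasible. I would realise $g_j$ by Mergelyan approximation on the union of $\alpha$ with a complementary arc near $\beta=\overline{bD\setminus\alpha}$, prescribing there values $\approx f_j$ (of modulus $<r$) on $\alpha$ and small values near $\beta$, and then bound $|g_j|$ on $\mathring D$ by the maximum principle. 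A secondary delicate point is the transition between consecutive branches: at each node of the tree all coordinates are simultaneously small, so the set of constrained coordinates can be switched continuously there while the map stays inside $T$.
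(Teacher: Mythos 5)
Your outer framework (Mergelyan first, normal form, subdivision of $\alpha$ with sub-discs handled one at a time, additive Cousin--Cartan splitting to glue, which is indeed unproblematic for $\C^n$-valued maps) parallels the paper's proof, but the core step that you yourself flag as ``the hard part'' is not merely delicate: it is false, and this is a fatal gap. After your reduction you need, for each constrained coordinate $j<n$, a function $g_j\in\Oscr(\overline D)$ with $|g_j|<r$ on all of $D$ and $g_j\approx f_j$ on $\alpha$. No such function need exist. Take $R=\C$, let $D$ be the closed unit disc with a thin cap sliced off, so that $\alpha$ is an arc of the unit circle with $\length(\alpha)>6$ and $\length(bD\setminus\alpha)<\tfrac14$, let $K\subset\{|z|\ge 1\}$ be a compact domain attached to $D$ exactly along $\alpha$, and let $f_1(z)=\tfrac{r}{2}z^{-1}$, $f_2=\dots=f_{n-1}=0$: then $f$ is holomorphic near $K$ and $f(\alpha)$ lies in the single (stem) branch tube, so your method must solve precisely the one-branch problem for $f_1$. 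But every $g\in\Oscr(\overline D)$ satisfies $\int_{bD}g\,dz=0$ by Cauchy's theorem, while $\int_\alpha z^{-1}dz=\I\,\length(\alpha)$, so if $|g-f_1|<\epsilon$ on $\alpha$ and $|g|\le r$ on $bD$ then $0=\bigl|\int_{bD}g\,dz\bigr|\ge \tfrac{r}{2}\length(\alpha)-\epsilon\,\length(\alpha)-r\,\length(bD\setminus\alpha)\ge 3r-\tfrac{r}{4}-7\epsilon>0$ for $\epsilon$ small, a contradiction. Your proposed repair, Mergelyan approximation on $\alpha\cup\beta'$ followed by the maximum principle, cannot close this gap: the maximum principle needs a bound on \emph{all} of $bD$, whereas Mergelyan controls the approximant only on the compact set $\alpha\cup\beta'$, which omits part of $bD$; and one cannot take $\alpha\cup\beta'=bD$, since $bD$ is not a Runge set in $R$ ($\mathring D$ is a relatively compact component of its complement), and winding data such as the above is not approximable on $bD$ by functions in $\Oscr(\overline D)$ at all.

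The obstruction identifies exactly what any proof must avoid: one may never require a constrained coordinate to stay below $r$ on the whole of $D$ while matching prescribed data on $\alpha$, and reducing to one branch per sub-disc forces precisely this -- in effect you have reduced to the lemma for a \emph{non-spanning} tree (a single line), where the statement fails. The paper's proof keeps the entire spanning tree in play over every sub-disc: each constrained coordinate $\tilde f_i$ is obtained by Runge approximation, on the Runge set $K\cup\Delta_0\cup\cdots$, of data that is already of modulus $<r$ there, which yields $|\tilde f_i|<r$ only near $\alpha\cup\Delta_0$; one then chooses a sub-disc $\Delta_i\subset D$, disjoint from $\alpha$ and from the other $\Delta_j$, so large that $|\tilde f_i|<r$ on $\overline{D\setminus\Delta_i}$, leaving $\tilde f_i$ completely uncontrolled on $\Delta_i$. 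The image still lies in the tube because on $\Delta_i$ the stem coordinate $\tilde f_n$ is pinned near the node value $a_i$, so $\tilde f(\Delta_i)$ sits in the tube around the branch $a_ie_n+\Lambda_i$; this disc-by-disc trade-off between one free coordinate and the pinned stem is where the spanning hypothesis is actually used. In the counterexample above, the paper's extension genuinely leaves the stem tube over $\Delta_1$ and travels along another branch, which is exactly the escape hatch your single-branch reduction throws away.
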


\begin{proof}
Recall that $\Delta^n\subset \C^n$ denotes the unit polydisc centred at the origin.
We shall first prove the lemma under the following additional assumptions 
on $f$ and $T$: 
\begin{enumerate}[\rm (a)]
\item $f(\alpha)\subset r\Delta^n$ for some $r>0$, and  
\item $T\subset \C^n$ is the polydisc tube of radius $r$ around a tree
of lines $\Lambda$ in the normal form \eqref{eq:normal}. 
(Recall that the polydisc tube $\Tscr_r(\Lambda)$ is the union of polydisc 
tubes around its branches.)  
\end{enumerate}
These conditions on $f$ and $T$ imply that
$
	f(\alpha) \subset r\Delta^n \subset T.
$ 
Indeed, $r\Delta^n$ is contained in the tube of radius $r$ around the stem 
$\Lambda_n =\C e_n$ of the tree $T$. 

By Mergelyan theorem we may assume that $f$ is holomorphic on a 
neighbourhood of $K$ in $R$. Whenever invoking Runge or Mergelyan theorem, 
we shall also interpolate the given map in the  given finite set of
points in $\mathring K$ without mentioning it again (see Theorem \ref{th:Mergelyan}).

For simplicity of exposition, we first consider the case when $\Lambda$ is
a simple tree (a comb) of the form \eqref{eq:simpletree}. 
We begin by explaining how to choose the first $n-1$ components 
of the new map $\tilde f=(\tilde f',\tilde f_n)=(\tilde f_1,\ldots, \tilde f_n)$; 
the last component $\tilde f_n$ will be determined in the final step.
The general case when $\Lambda$ is of the form \eqref{eq:normal} will
be obtained by induction on $n$.

Let $\beta=bD\setminus \alpha$ be the complementary arc to $\alpha$ in $bD$.
Pick a closed topological disc $\Delta_0\subset D$ such that 
$\Delta_0 \cap \alpha =\varnothing$ and $\Delta_0 \cap bD$ is an arc contained 
in $\beta$. 
We extend the first component $f_1$ from $K$ to $K\cup \Delta_0$ by setting
$f_1=0$ on $\Delta_0$. By Runge theorem we can approximate $f_1$ on 
$K\cup \Delta_0$ by a holomorphic function $\tilde f_1$ on $L=K\cup D$ 
such that $|\tilde f_1|<r$ holds on $\alpha\cup \Delta_0$; see (a). 
Hence, there is a closed disc $\Delta_1\subset D$ such that 
\begin{enumerate}[\rm (i$_{1}$)]
\item $\Delta_1 \cap (\alpha\cup \Delta_0) =\varnothing$, 
\item $\overline {D\setminus \Delta_1}$ is the union of two disjoint discs 
and $b\Delta_1\cap\beta$ consists of two disjoint arcs, and 
\item $|\tilde f_1|<r$ holds on $\overline{D\setminus \Delta_1}$.
\end{enumerate}
%
%
\begin{figure}[ht]
	\includegraphics[width=.85\textwidth]{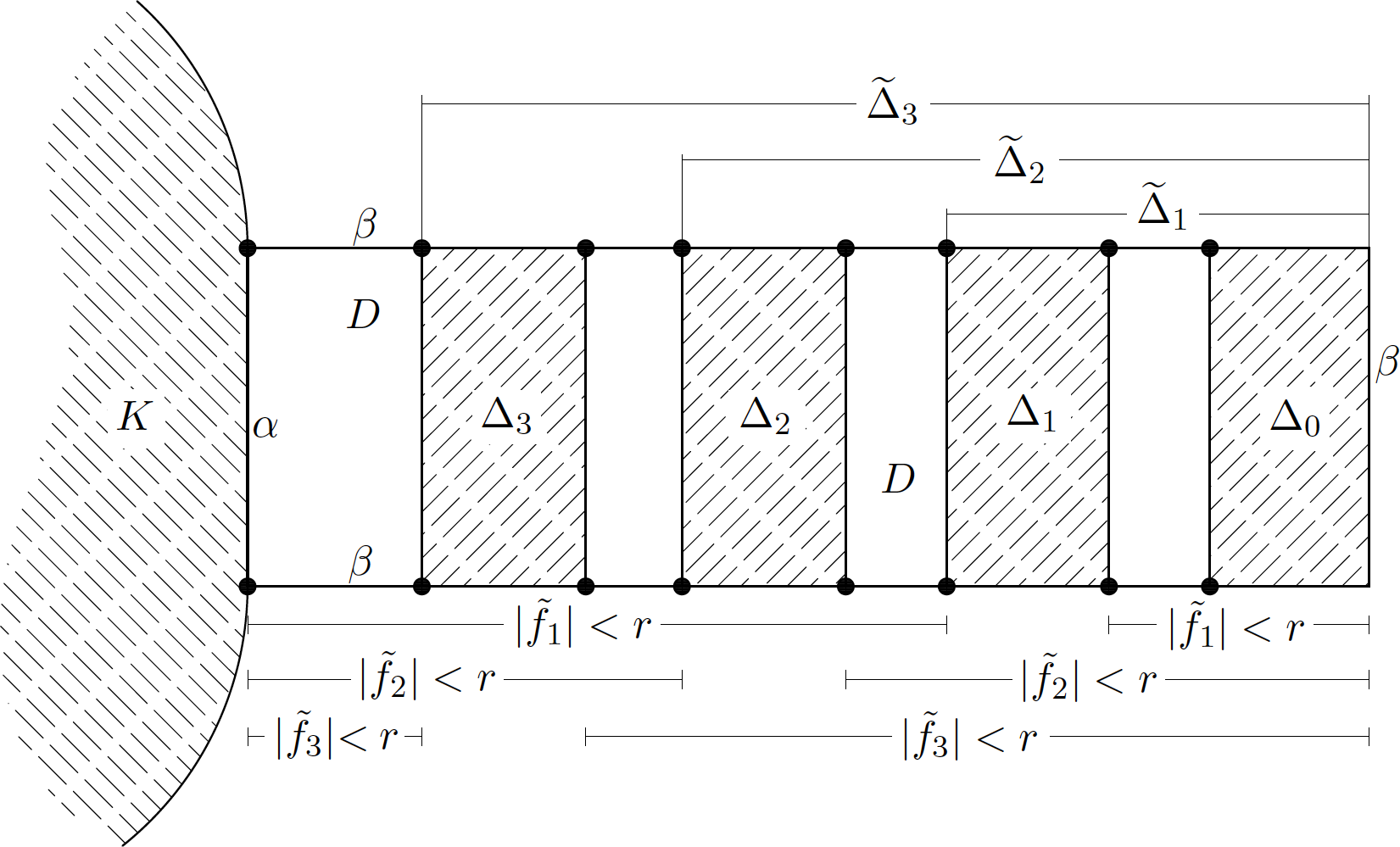}
	\caption{Proof of Lemma \ref{lem:bump} -- the special case}\label{fig:31}
\end{figure}
See Figure \ref{fig:31}.
Condition (iii$_1$) holds if the disc $\Delta_1$ satisfying conditions (i$_1$) and (ii$_1$) 
is chosen large enough. Indeed, by increasing $\Delta_1$ the set 
$\overline{D\setminus \Delta_1}$ shrinks to $\alpha\cup \Delta_0$, and we have 
that $|\tilde f_1|<r$ on $\alpha\cup \Delta_0$. 
Note that $K\cap \Delta_1=\varnothing$, and hence the 
set $K\cup \Delta_1$ is Runge in an open neighbourhood of $L=K\cup D$. 

If $n=2$, we proceed to the final argument explaining how to choose the last 
component $\tilde f_n$. Assume now that $n>2$. Let $\wt\Delta_1$ denote the union 
of $\Delta_1$ and the component of $D\setminus \Delta_1$ containing $\Delta_0$, so 
$\wt \Delta_1\subset D$ is a closed disc disjoint from $\alpha$ (see Fig.\ \ref{fig:31}).
We extend the second component $f_2$ of $f$ to 
the set $K\cup \wt \Delta_1$ by taking $f_2=0$ on $\wt \Delta_1$ and then apply 
Runge theorem on $K\cup \wt \Delta_1$ to find a holomorphic function $\tilde f_2$ 
on $L$ such that $|\tilde f_2|<r$ holds on $\alpha \cup \wt \Delta_1$; see (a). 
Hence, there is a disc $\Delta_2\subset D$ such that 
\begin{enumerate}[\rm (i$_{2}$)]
\item $\Delta_2\cap (\alpha\cup \wt \Delta_1)=\varnothing$,
\item $\overline {D\setminus \Delta_2}$ is the union of two disjoint discs 
and $b\Delta_2\cap\beta$ consists of two disjoint arcs, and 
\item $|\tilde f_2|<r$ holds on $\overline{D\setminus \Delta_2}$.
\end{enumerate}
As in the first step, condition (iii$_2$) holds if the disc $\Delta_2$ satisfying
(i$_2$) and (ii$_2$) is chosen big enough within 
$D\setminus (\alpha\cup \wt\Delta_1)$. Let $\wt\Delta_2$ denote the union of $\Delta_2$ 
and the component of $D\setminus \Delta_2$ containing $\wt \Delta_1$, so 
$\wt \Delta_2\subset D$ is a closed disc disjoint from $\alpha$. See Figure \ref{fig:31}.

If $n=3$, we proceed to the last step. If on the other hand
$n>3$, we repeat the same argument with the component $f_3$ to obtain a 
holomorphic function $\tilde f_3$ on $L$ such that 
$|\tilde f_3|<r$ holds on $\alpha\cup \wt\Delta_2$. We then pick a disc 
$\Delta_3\subset D$ which is disjoint from $\alpha\cup \wt \Delta_2$
such that $D\setminus \Delta_3$ is the union of two disjoint discs, one of them
containing $\wt\Delta_2$, and $|\tilde f_3|<r$ holds on $D\setminus \Delta_3$.
Let $\wt\Delta_3$ denote the union of $\Delta_3$ 
and the component of $D\setminus \Delta_3$ containing $\Delta_2$. See Figure \ref{fig:31}.

Clearly we can continue inductively in order to approximate the first $n-1$
components $f_1,\ldots,f_{n-1}$ of $f$ by holomorphic functions 
$\tilde f_1,\ldots,\tilde f_{n-1}$ on $L$ such that 
\begin{equation}\label{eq:cond1}
	|\tilde f_i|<r\ \ \text{holds on}\ \ \overline{D\setminus \Delta_i}
	\ \ \text{for $i=1,\ldots, n-1$}.
\end{equation}
We now extend the last component $f_n$ to the Runge compact set
$K'=K\cup\,\bigcup_{i=0}^{n-1}\Delta_i$ by setting $f_n=a_i$ on $\Delta_i$ 
for $i=0,1,\ldots, n-1$, where $a_0=0$ and the numbers $a_i\in \C$ 
for $i=1,\ldots n-1$ are as in \eqref{eq:simpletree} with $k=n-1$.
Using Runge theorem we approximate $f_n$ on $K'$
by a holomorphic function $\tilde f_n$ on $L=K\cup D$ such that 
\begin{equation}\label{eq:cond2}
	|\tilde f_n-a_i|<r\ \ \text{holds on}\ \ \Delta_i\ \ \text{for $i=0,1,\ldots,n-1$}.
\end{equation}

Conditions \eqref{eq:cond1} and \eqref{eq:cond2} imply that the map 
$\tilde f=(\tilde f',\tilde f_n) = (\tilde f_1,\ldots, \tilde f_n)$ sends the disc $D$ into the tube $T$. 
Indeed, on the disc $\Delta_i$ for $i=1,\ldots,n-1$ all components of $\tilde f'$ 
except perhaps $\tilde f_i$ are smaller than $r$ in absolute value while $|\tilde f_n-a_i|<r$, 
so $\tilde f(\Delta_i)$ is contained in the polydisc tube of radius $r$ around the affine line 
$a_n e_n+\Lambda_i\subset\Lambda$. 
On the other hand, on $D\setminus \bigcup_{i=1}^{n-1}\Delta_i$ 
all components of $\tilde f'$ are smaller than $r$, so its image by $\tilde f$ is contained 
in the polydisc tube of radius $r$ around the stem $\Lambda_n=\C e_n\subset\Lambda$. 
Note also that 
\begin{equation} \label{eq:Delta0}
	\text{$|\tilde f_j|<r$ holds on $\alpha \cup \Delta_0$ for all $j=1,\ldots,n$.}
\end{equation}

This proves the lemma (under the assumptions (a) and (b) on $f$ and $T$) 
if $\Lambda$ is a simple tree. Keeping the assumptions (a) and (b) in place, 
we now let $\Lambda$ be any tree of the form \eqref{eq:normal} with subtrees 
$\Lambda^j$ for $j=1,\ldots,l$. We proceed by induction on $n$. 
The result clearly holds for $n=2$ since in this case every tree is a comb. 
Assume inductively that $n>2$ and the result (including the condition \eqref{eq:Delta0}) 
holds in dimensions $<n$. Let $\Sigma^j$ denote the coordinate
subspace of $\C^{n-1}\times \{0\}$ spanned by the direction vectors of
the affine lines in the tree $\Lambda^j$. By renumbering the coordinates we may 
assume that $\Sigma^1$ is spanned by the coordinate directions
$1,\ldots, n_1$ for some $n_1<n$. Pick a disc $\Delta_0\subset D$ as above. 
By the inductive hypothesis, we can approximate the component functions $f_i$ for 
$i=1,\ldots,n_1$ by functions $\tilde f_i\in\Oscr(K\cup D)$ satisfying $|\tilde f_i|<r$ 
on $\alpha\cup\Delta_0$ such that the map
$\tilde f^1=(\tilde f_1,\ldots,\tilde f_{n_1}): K\cup D\to \C^{n_1}$
send $D$ into the polydisc tube $T^1\subset \C^{n_1}$ of radius $r$ around 
the tree $\Lambda^1$. Choose a disc $\Delta_1\subset D$ satisfying conditions 
(i$_{1}$)--(ii$_{1}$) above, and with condition (iii$_{1}$) replaced by
\begin{enumerate}[\rm (iii$_{1}$)]
\item $|\tilde f_i|<r$ holds on $\overline{D\setminus \Delta_1}$ for $i=1,\ldots,n_1$.	
\end{enumerate}
If $l=1$ (so $n_1=n-1$), we proceed to the last step. 
Otherwise, we repeat the same procedure for the second subtree $\Lambda^2$.
We may assume that the subspace $\Sigma^2\subset\C^n$ associated to $\Lambda^2$ 
consists of coordinate directions $n_1+1,\ldots,n_2$ for some $n_2<n$. 
Let $\wt\Delta_1$ denote the union of $\Delta_1$ and the component of 
$D\setminus \Delta_1$ containing $\Delta_0$. 
We extend the components $f_i$ of $f$ for $i=n_1+1,\ldots,n_2$ to 
the set $K\cup \wt \Delta_1$ by taking $f_i=0$ on $\wt \Delta_1$ and apply 
the inductive hypothesis to find holomorphic functions $\tilde f_i$ 
on $L$ such that $\tilde f^2=(\tilde f_{n_1+1},\ldots,\tilde f_{n_2}): L \to \C^{n_2}$
maps $D$ into the polydisc tube $T^2\subset \C^{n_2}$ of radius $r$ 
around the tree $\Lambda^2$, and $|\tilde f_i|<r$ holds on $\alpha \cup \wt \Delta_1$
for all $i=n_1+1,\ldots,n_2$. Hence, there is a disc $\Delta_2\subset D$ such that conditions 
(i$_{2}$)--(ii$_{2}$) hold, and (iii$_{2}$) is replaced by
\begin{enumerate}[\rm (iii$_{2}$)]
\item $|\tilde f_i|<r$ holds on $\overline{D\setminus \Delta_2}$ for 
$i=n_1+1,\ldots,n_2$.	
\end{enumerate}
Clearly this process continues inductively. In the last step, we choose 
$\tilde f_n$ as in \eqref{eq:cond2} with $n-1$ replaced by $l$.
We see as before that the holomorphic map 
$\tilde f=(\tilde f^1,\ldots,\tilde f^l,\tilde f_n):L \to \C^n$ sends $D$ into $T$. 
This closes the induction step and completes the proof of the lemma under the additional 
assumptions (a) and (b) made at the beginning of the proof. 

It remains to prove the general case of the lemma with the only assumption that 
$f(\alpha) \subset T$, where $T$ is a tube around an arbitrary spanning tree of lines $\Lambda$.
By Lemma \ref{lem:standard}, for every point $p\in \alpha$ there are an open neighbourhood 
$\alpha_p\subset bK$ of $p$, a tube $T_p\subset T$ containing a translate of 
$\Lambda$ through $f(p)$ such that $f(\alpha_p)\subset T_p$,
and an affine isomorphism $U_p:\C^n\to \C^n$ such that
$U_p(f(p))=0$, $U_p(T_p)$ is a tube of radius $r_p>0$ around a spanning tree
of the form \eqref{eq:normal}, and $U_p(f(\alpha_p)) \subset r_p\Delta^n$. 
In other words, the assumptions (a) and (b)
hold for the arc $\alpha_p$, the map $U_p\circ f$,
the tube $U_p(T_p)$, and the number $r_p$. 
This allows us to subdivide the arc $\alpha$ into a finite union 
$\alpha=\bigcup_{j=1}^s \alpha_j$ of closed subarcs lying back-to-back 
such that the above conditions hold on each $\alpha_j$ for an affine linear 
change of coordinates $U_j$ on $\C^n$, tube $T_j\subset U_j(T)$, and number $r_j>0$.
Let $p_{j-1}$ and $p_j$ denote the endpoints of $\alpha_j$ such that 
$p_j=\alpha_j\cap \alpha_{j+1}$ for $j=1,\ldots,s-1$. Choose an embedded arc 
$\gamma_j\subset D$ connecting the point $p_j=\alpha_j\cap \alpha_{j+1}$
to a point $q_j\in \beta=bD\setminus \alpha$ so that these arcs are pairwise disjoint,
and hence they subdivide $D$ into the union $D=\bigcup_{j=1}^s D_j$ of discs
satisfying $D_j\cap D_{j+1}=\gamma_j$ for $j=1,\ldots,s-1$ and $D_j\cap D_k=\varnothing$
if $|j-k|>1$. For notational reasons we also set $\gamma_0=p_0$ and $\gamma_s=p_s$.
(See Figure \ref{fig:31b}.)
%
%
\begin{figure}[ht]
	\includegraphics[width=.8\textwidth]{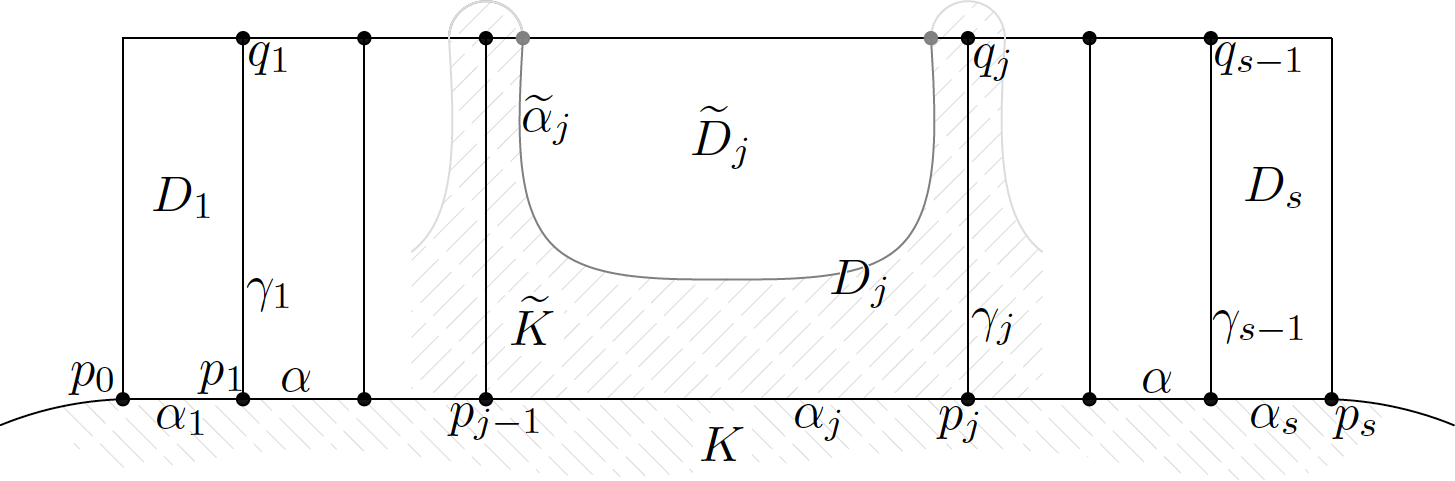}
	\caption{Proof of Lemma \ref{lem:bump} -- the general case}\label{fig:31b}
\end{figure}
We extend $f$ to the arc $\gamma_j$ as the constant map $f(p)=f(p_j)$ 
for each $p\in\gamma_j$. By Mergelyan theorem we can approximate $f$ as 
closely as desired on the compact set 
$
	S=K\cup\bigcup_{j=1}^{s-1}\gamma_j
$ 
by a holomorphic map defined on an open neighbourhood $V$ of $S$. 
Choose a small compact neighbourhood 
$\wt K\subset V$ of $S$ with smooth boundary such that 
$\overline{D\setminus \wt K}=\bigcup_{j=1}^s \wt D_j$ is the union of pairwise disjoint 
closed discs $\wt D_j \subset D_j$, and $\wt \alpha_j = b\wt D_j\cap \wt K$ is an arc
close enough to $\gamma_{j-1}\cup \alpha_j\cup \gamma_j$ such that 
\[
	U_j(f (\wt \alpha_j)) \subset r_j\Delta^n \subset T_j
	\quad \text{holds for $j=1,\ldots,s$}.
\] 
(See Figure \ref{fig:31b}.) 
It remains to apply the already established special case of the lemma
to successively extend the map (with approximation on $\wt K$
and interpolation in the  given finitely many points in $\mathring K$)
across each of the discs $\wt D_1,\ldots, \wt D_s$.  Since 
$L=K\cup D \subset \wt K\cup\, \bigcup_{j=1}^s \wt D_j$, this completes the proof.
\end{proof}

%
%
\begin{remark}\label{rem:bump}
In the proof of Lemma \ref{lem:noncritical} we shall use a version of 
Lemma \ref{lem:bump} for a certain holomorphic map 
$K\times \overline \B^N\to \C^n$ for some $N\in\N$, 
where $\B^N$ is the unit ball of $\C^N$. We see by inspection that 
the same proof applies by using the Oka--Weil theorem instead of 
the Runge theorem. This will be used in the proof of 
Lemma \ref{lem:noncritical}.
\end{remark}

%
%
%
%
\section{Extending a holomorphic map across an annulus}\label{sec:noncritical} 

By using Lemma \ref{lem:bump} and gluing methods from Oka theory,
we now prove the following lemma, which is the main ingredient in the proof of 
Theorems \ref{th:main1} and \ref{th:main2}. It provides the so-called noncritical
case in the construction of holomorphic maps $R\to X$. 

%
%

%
%
\begin{lemma}\label{lem:noncritical}
Let $X$ be a complex manifold of dimension $n$ with a complete distance
function $\dist_X$. Assume that $\Omega\subset X$ is an open subset and
$E\subset \Omega$ is a closed subset with $\Hcal^{2n-1}(E)=0$ such that 
$X$ is dominable by a tube of lines at every point $x\in\Omega\setminus E$.  
Let $R$ be an open Riemann surface and $K\subset L$ be compact 
Runge sets in $R$ with piecewise smooth boundaries 
such that $K\subset \mathring L$ 
and $K$ is a strong deformation retract of $L$. 
Assume that $f:K \to X$ is a map of class $\Ascr(K)$ 
such that $f(bK)\subset \Omega$. 
Given a finite set $A=\{a_1,\ldots,a_m\}\subset \mathring K$ 
and numbers $\epsilon>0$ and $k\in\N$, 
there is a holomorphic map $\tilde f:L\to X$ satisfying 
\begin{itemize}
\item[\rm (i)] $\sup_{p\in K} \dist_X (\tilde f(p),f(p)) <\epsilon$, and  
\item[\rm (ii)] $\tilde f$ agrees with $f$ to order $k$ at every point of $A$.
\end{itemize}
If $\Omega$ is dominable by a tube of lines at every point $x\in \Omega\setminus E$
then $\tilde f$ can be chosen such that 
\begin{itemize}
\item[\rm (iii)] $\tilde f(L\setminus \mathring K)\subset \Omega$.
\end{itemize}
\end{lemma}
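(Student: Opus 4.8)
The plan is to reduce to the \emph{bump case}, localize the target by a single domination chart so that Lemma \ref{lem:bump} applies in Euclidean space, and then glue by a dominating spray. Since $K$ is a strong deformation retract of $L$ and both have piecewise smooth boundary, $L\setminus\mathring K$ is a finite disjoint union of compact annuli, each attached along a boundary curve of $K$; attaching an annulus amounts to successively attaching two discs along boundary arcs, and these may be subdivided further. Hence it suffices to treat the elementary step $L=K\cup D$, where $D$ is a compact disc meeting $K$ in an arc $\alpha=bK\cap bD$, with $\alpha$ (and $D$) as short as we wish. By Theorem \ref{th:Mergelyan} I may assume $f$ is holomorphic on a neighbourhood of $K$ and interpolates its own $k$-jet at $A$. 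Since $f(bK)\subset\Omega$ and $\Hcal^{2n-1}(E)=0$, Remark \ref{rem:transversality} lets me perturb $f$ slightly near $bK$, staying $\epsilon$-close on $K$, fixing the $k$-jet at $A\subset\mathring K$, and keeping $f(bK)\subset\Omega$, so that in addition $f(\alpha)\subset\Omega\setminus E$. By hypothesis every point of $f(\alpha)$ then lies in the image of a submersive domination map $F\colon T\to\Omega$ from a spanning tube of lines $T\subset\C^N$ (valued in $\Omega$ in the situation needed for (iii), and in $X$ otherwise); shrinking $\alpha$ and $D$, I may assume $f(\alpha)$ lies in one such chart and that $F$ admits a holomorphic section $s$ over a neighbourhood of $f(\alpha)$.

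Next I would lift and extend at the level of sprays, following Remark \ref{rem:bump}. On a small compact neighbourhood $K_0\subset K$ of $\alpha$ the composition $g=s\circ f\colon K_0\to T$ is holomorphic, satisfies $F\circ g=f$, and has $g(\alpha)\subset T$. For $t$ in a small ball $\overline\B^N$ the map $G(p,t)=g(p)+t$ is a holomorphic spray in $\C^N$ with $G(\alpha\times\overline\B^N)\subset T$. Applying the spray version of Lemma \ref{lem:bump} in ambient dimension $N$ (the tube $T\subset\C^N$ is spanning, and one uses the Oka--Weil theorem in place of Runge's, exactly as in Remark \ref{rem:bump}) to the pair $K_0\subset K_0\cup D$, I obtain $\tilde G\colon(K_0\cup D)\times\overline\B^N\to\C^N$ approximating $G$ uniformly on $K_0\times\overline\B^N$ and satisfying $\tilde G(D\times\overline\B^N)\subset T$. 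Composing yields the $\Omega$-valued dominating spray $\Phi_D=F\circ\tilde G$ over $K_0\cup D$, with $\Phi_D(\,\cdot\,,0)\approx f$ on $K_0$, with $t\mapsto\Phi_D(p,t)$ submersive at $t=0$, and with $\Phi_D(D\times\overline\B^N)\subset\Omega$.

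To glue, I would construct over the Stein set $K$ a dominating spray $\Phi_K$ with $\Phi_K(\,\cdot\,,0)=f$, built from finitely many global holomorphic sections of $f^*TX$ spanning the bundle over $K$ (Cartan's Theorem A) exponentiated by a spray. On the overlap $K_0$ both sprays are dominating and their central maps are close, so they are related by a fibre transition close to the identity. Viewing $(K,\,K_0\cup D)$ as a Cartan pair, I split this transition additively by the standard solution of the associated $\dibar$/Cousin problem, arranging the splitting to vanish to order $k$ at $A\subset\mathring K$, and reparametrize so the two sprays match on the overlap and define a single spray over $L$. Its central map $\tilde f=\Phi(\,\cdot\,,0)\colon L\to X$ is the required extension: (i) holds by approximation, (ii) by the jet control at $A$, and (iii) because $\Phi_D$ is $\Omega$-valued gives $\tilde f(D)\subset\Omega$, while $f(bK)\subset\Omega$ together with openness of $\Omega$ keeps the image near $bK$ inside $\Omega$, so $\tilde f(L\setminus\mathring K)\subset\Omega$. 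Iterating over the finitely many bumps, with $\dist_X$ complete so that the small per-step errors stay controlled and images do not escape, completes the construction.

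The main obstacle is this final gluing step: merging $f$ on $K$ with the $\Omega$-valued spray $\Phi_D$ over $K_0\cup D$ into one holomorphic map on $L$ while simultaneously keeping the image of the newly attached region inside $\Omega$ for (iii) and preserving the prescribed $k$-jet at the interior points $A$. This is exactly where the dominating spray and its Oka--Weil gluing are needed, and where one must verify both that the domination chart $F$ can be taken $\Omega$-valued and that the transition in the splitting is tangent to the identity to sufficiently high order at $A$.
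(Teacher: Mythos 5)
Your overall strategy---localize via a submersion chart over a dominating tube, extend with the spray version of Lemma \ref{lem:bump} (Remark \ref{rem:bump}), and glue with a dominating spray over $K$ along a Cartan pair with jet control at $A$---is the same as the paper's. But there is a genuine gap in your opening reduction, and it sits exactly where the difficulty of the lemma lives. You claim that attaching an annulus ``amounts to successively attaching two discs along boundary arcs, and these may be subdivided further'', so that it suffices to treat a single disc $D$ attached along an arc $\alpha$ that is as short as you wish, with $f(\alpha)$ contained in one domination chart. This works only for the \emph{first} disc. When you attach the second (or any later) piece of the annulus, its attaching arc necessarily contains edges of the previously attached disc, and on those edges the map you have just constructed is of the form $\sigma_1\circ\tilde g_1$, whose values run through $\sigma_1(T_1)$ and are completely uncontrolled: Lemma \ref{lem:bump} lets the extension wander through the whole (unbounded) tube---that is its very purpose---so nothing keeps these values in a single chart, off the exceptional set $E$, or, in the case of conclusions (i)--(ii) where the domination maps take values in $X$ rather than in $\Omega$, even inside $\Omega$, which is the only place where dominability is assumed at all. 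Shortness of the arcs in $R$ does not help, since the derivative of the extended map on the attached disc is not controlled. So your successive scheme cannot be iterated: after one step there may be no charts available along the next attaching arc, and the induction dies.

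The paper's proof avoids this by never attaching discs successively. It keeps the whole annulus and prepares \emph{all} attaching arcs before any extension is made: after one generic perturbation (Remark \ref{rem:transversality}) arranging $f(bK)\subset\Omega\setminus E$, it subdivides $bK$ into arcs $\alpha_i$ with $f(\alpha_i)$ in charts $U_i\subset\Omega\setminus E$, joins the endpoints $p_i$ to $bL$ by pairwise disjoint spokes $\gamma_i$, extends $f$ as the constant $f(p_i)$ along each spoke, and applies Mergelyan (Theorem \ref{th:Mergelyan}) on the admissible set $S=K\cup\bigcup_i\gamma_i$. A thin compact neighbourhood $\wt K$ of $S$ inside $L$ then leaves a \emph{disjoint} union of discs $D_i$, each attached along an arc whose image under the already holomorphic map lies in the prescribed chart $U_i$; one disc spray $G_i=\sigma_i\circ\tilde g_i$ is built for each $i$, and all of them are glued with the central dominating spray $F$ over $\wt K$ in a single Cartan-pair step. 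To rescue your approach you would have to re-prove exactly this: pre-assign the map on a spine of the annulus so that every attaching arc has controlled image before any bump is filled. As written, the reduction is unjustified and the proof does not go through.
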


Note that the condition that $X$ be dominable at a point $x\in \Omega\subset X$ 
is weaker than the condition that $\Omega$ be dominable at $x$.
In the former case, a map 
which is dominating at $x\in\Omega$ may have range in $X$, while in the latter case it must lie in $\Omega$.

\begin{proof}
The conditions imply that the compact set 
$D=L\setminus \mathring K$ is a union of finitely many pairwise disjoint annuli.
It suffices to consider the case when $D$ is a single annulus
since the same construction can be performed independently on each of them.

Before proceeding, we recall the following implication of the
implicit function theorem. Given a holomorphic map $\sigma:T\to X$ 
of complex manifolds which is a submersion at a point $t_0\in T$,
there are open neighbourhoods $t_0 \in T_0\subset T$, $\sigma(t_0)\in X_0\subset X$ and a biholomorphic map $\phi:T_0\to X_0\times \Delta^d$ with 
$\dim X +d=\dim T$ such that $\sigma|_{T_0} = \pi\circ \phi$, where
$\pi:X_0\times \Delta^d\to X_0$ is the projection $\pi(x,t)=x$.  
We shall call such a triple $(T_0,\sigma,X_0)$ a {\em submersion chart}.
(When $\dim T=\dim X$, this says that $\sigma$ is locally biholomorphically 
at a point of maximal rank.) Given a submersion chart
$(T_0,\sigma,X_0)$, every holomorphic map $f:Y\to X_0$ lifts to a holomorphic 
map $g:Y \to T_0$ such that $\sigma\circ g=f$.
\[
	\xymatrixcolsep{4pc}
	\xymatrix{& T_0 \ar[d]_{\sigma} \ar[r]^\phi & X_0\times \Delta^d \ar[d]^\pi \\
	Y\ar[r]^{f}\ar[ur]^{g} & X_0 \ar[r]^{\mathrm{Id}} & X_0}
\]

Given $l\in\N$ we write $\Z_l=\Z/l\Z = \{0,1,\ldots,l-1\}$. 
The assumptions on $E\subset \Omega\subset X$ and the general 
position argument in Remark \ref{rem:transversality}
imply that, after a small perturbation of $f$ which is fixed to the given 
order $k$ in the  points in $A\subset \mathring K$,
we have that $f(bK)\subset \Omega\setminus E$.
Hence, we can subdivide the closed Jordan curve $bK$ into a finite union of 
compact subarcs $\{\alpha_i: i\in \Z_l\}$ lying back-to-back and satisfying 
the following conditions.
\begin{enumerate}[\rm ({A}1)]
\item $\alpha_i$ and $\alpha_{i+1}$ have a common endpoint $p_{i+1}$ and 
are otherwise disjoint for every $i\in\Z_l$. 
\item $\bigcup_{i\in\Z_l} \alpha_i=bK$.
\item For every $i\in \Z_l$ there are a spanning tube of lines $T_i\subset \C^{n_i}$
for some $n_i \ge \dim X$, a holomorphic map $\sigma_i:T_i\to X$, a neighbourhood 
$U_i\subset \Omega\setminus E$ of $f(\alpha_i)$, 
and an open subset $\omega_i\subset T_i$ such that 
$\sigma_i(\omega_i)=U_i$ and the triple $(\omega_i,\sigma_i,U_i)$ is a submersion chart.
\end{enumerate}
Condition (A3) can be achieved if the arcs $\alpha_i$ are chosen sufficiently short.

Let $p_i$ and $p_{i+1}$ denote the endpoints of $\alpha_i$, ordered so that 
$p_{i+1}=\alpha_i\cap \alpha_{i+1}$ for each $i\in\Z_l$.
Choose an embedded arc $\gamma_i\subset D$ connecting the point 
$p_i$ to a point $q_i\in bL$ so that these arcs are pairwise disjoint,
they intersect $bD=bK\cup bL$ only in the  respective endpoints $p_i$ and $q_i$,
and these intersections are transverse. Note that the compact set 
\begin{equation} \label{eq:S}
	S=K\cup\bigcup_{i\in\Z_l}\gamma_i
\end{equation} 
is admissible for Mergelyan approximation. Recall that $f(\alpha_i)\subset U_i$ by 
condition (A3). We extend $f$ as a constant map to each arc $\gamma_i$ having 
the value $f(p_i)$, and we use Mergelyan theorem (see Theorem \ref{th:Mergelyan})
to approximate the resulting map $f:S\to X$ uniformly on $S$ 
by a holomorphic map $V\to X$ on a neighbourhood $V\subset R$ of $S$, 
which we still denote by $f$. Assuming that the approximation is close enough, we have that 
\begin{equation} \label{eq:falphai}
	f(\gamma_{i}\cup \alpha_i\cup \gamma_{i+1})\subset U_i
	\ \ \text{holds for each $i\in\Z_l$}.
\end{equation}
Let $\wt S\subset V$ be a thin compact neighbourhood of $S$ with smooth boundary and set 
\begin{equation} \label{eq:wtK}
	\wt K=L\cap \wt S \subset V.
\end{equation} 
In light of \eqref{eq:falphai} and $U_i\subset \Omega$ (see (A3)), 
the set $\wt S$ can be chosen such that 
\[ 
	f(\wt K\setminus \mathring K) \subset \Omega\setminus E.
\] 
Furthermore, we can choose $\wt S$ (and hence $\wt K$) such that the set 
\begin{equation} \label{eq:Di}
	\overline{L \setminus \wt K}=\bigcup_{i\in\Z_l} D_i
\end{equation}
is the union of pairwise disjoint compact discs $D_i$ with piecewise smooth 
boundaries, and for each $i\in \Z_l$ the arc 
$\wt \alpha_i = \overline{bD_i\cap \mathring L}$
is so close to the arc $\gamma_{i}\cup \alpha_i\cup \gamma_{i+1}$ 
that \eqref{eq:falphai} implies 
\begin{equation}\label{eq:tildealphai}
	f(\wt \alpha_i) \subset U_i\quad\text{for all $i\in\Z_l$}.
\end{equation}
The complementary arc $bD_i\setminus \wt \alpha_i$ lies in $bL$. 
See Figure \ref{fig:51}.
%
%
\vspace{2mm}
\begin{figure}[ht]
	\includegraphics[width=.6\textwidth]{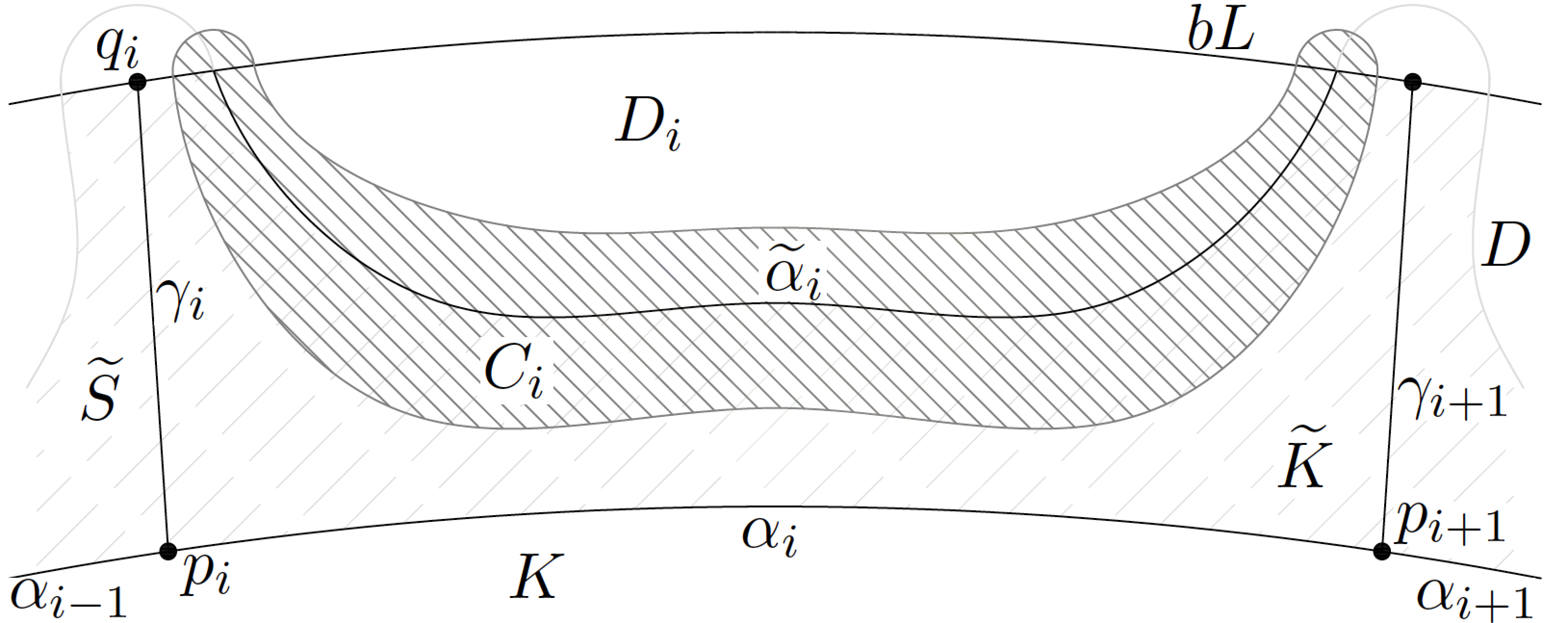}
	\caption{Sets is the proof of Lemma \ref{lem:noncritical}}\label{fig:51}
\end{figure}

By standard methods, using flows of holomorphic vector fields 
and up to shrinking the neighbourhood $V$ around $\wt K$ \eqref{eq:wtK} if necessary, 
we can find a holomorphic map 
\begin{equation}\label{eq:F} 
	F:V\times \B^N\to X\ \ \text{for some $N\ge \dim X$} 
\end{equation}
satisfying the following conditions (see \cite[Lemma 5.10.4]{Forstneric2017E}):
\begin{enumerate}[\rm (F1)]
\item $F(\cdotp,0)=f$, 
\item $F(\cdotp,t)$ agrees with $f$ to order $k$ 
at each point of $A\subset \mathring K$ for every $t\in\B^N$, and 
\item the partial derivative 
$\frac{\di}{\di t}\big|_{t=0}F(p,t):\C^N\to T_{f(p)}X$ is surjective for all $p\in V\setminus A$. 
\end{enumerate}
Such $F$ is called a (local) holomorphic spray with the core $f$ which is dominating  
on $V\setminus A$. (Here, $A=\{a_1,\ldots,a_m\}$ is the finite set given in the statement.)

For each $i\in\Z_l$ we pick a compact smoothly bounded disc neighbourhood 
$C_i\Subset V$ of the arc $\wt \alpha_i$ such that $f(C_i)\subset U_i$ 
(see \eqref{eq:tildealphai}) and $C_i\cap A=\varnothing$. Hence for some $r\in (0,1)$ we have 
that $F(C_i \times r\overline\B^N)\subset U_i$. Replacing the map $F(\cdotp,t)$ by 
$F(\cdotp,rt)$ for each $t\in\B^N$, we may assume that this holds for $r=1$. 
Furthermore, $C_i$ can be chosen such that 
$D_i\setminus \mathring C_i$ is a closed disc attached to $C_i$ as in 
Lemma \ref{lem:bump}; see Figure \ref{fig:51}.

Since $(\omega_i,\sigma_i,U_i)$ is a submersion chart for the map 
$\sigma_i:T_i\to X$ (see condition (A3)), there exists for each $i\in \Z_l$ a holomorphic map 
$g_i:C_i \times \overline \B^N \to \omega_i\subset T_i\subset \C^{n_i}$ such that  
\begin{equation}\label{eq:Tj}
	F = \sigma_i \circ g_i\ \ \text{holds on $C_i\times \overline \B^N$}.
\end{equation}
Since $D_i\setminus \mathring C_i$ is a closed disc attached to $C_i$ along
an arc, Lemma \ref{lem:bump} and Remark \ref{rem:bump} 
show that we can approximate the map $g_i$ as closely as desired 
uniformly on $C_i\times \overline \B^N$ by a holomorphic map 
$\tilde g_i:(C_i\cup D_i)\times \overline \B^N \to T_i\subset \C^{n_i}$. 
Define the map $G_i$ by
\begin{equation}\label{eq:Gi}
	G_i=\sigma_i\circ \tilde g_i:(C_i\cup D_i)\times \overline \B^N \to X
	\ \ \text{for $i\in\Z_l$}.
\end{equation} 
It follows from \eqref{eq:Tj} that $G_i$ approximates $F$ uniformly
on $C_i \times \overline \B^N$ for every $i\in\Z_l$. 

Recall that the spray $F$ is dominating on $V\setminus A$ 
and $C_i\cap A=\varnothing$ for every $i\in\Z_l$.
Hence, there is a number $r\in (0,1)$ depending on $F$ such that, if 
the approximations are close enough, we can glue $F$ and the sprays $G_i$ 
$(i\in\Z_l)$ into a holomorphic spray $\wt F:L\times r\B^N\to X$ which approximates 
$F$ on $\wt K\times r\B^N$,
and it agrees with $F$ (and hence with $f=F(\cdotp,0)$) to order $k$ at every point of 
$A\times r\B^N$ (see (F2)). We refer to 
\cite[Propositions 5.8.1 and 5.9.2]{Forstneric2017E} for this gluing technique.
The holomorphic map $\tilde f=\wt F(\cdotp,0):L\to X$ then 
satisfies the conclusion of Lemma \ref{lem:noncritical} provided that the
approximations made in the proof are close enough.

It remains to justify (iii). As before, we may assume after a small
perturbation of $f$ that $f(bK)\subset \Omega\setminus E$. There is a subdivision of $bK$
into arcs $\alpha_i$ as in (A2), open neighbourhoods $U_i\subset \Omega\setminus E$
of their images $f(\alpha_i)$, and holomorphic maps $\sigma_i:T_i\to \Omega$ from 
spanning tubes of lines $T_i\subset\C^{n_i}$ satisfying condition (A3). 
We can choose the set $\wt K$ in \eqref{eq:wtK}, neighbourhoods $C_i\subset V$ 
of the arcs $\wt \alpha_i = bD_i\cap L$ in \eqref{eq:tildealphai},
and the spray $F$ in \eqref{eq:F} such that $F(C_i\times \overline\B^N) \subset \Omega$
holds for every $i\in\Z_l$. From \eqref{eq:Gi} and $\sigma_i(T_i)\subset\Omega$ 
it follows that each spray $G_i$ \eqref{eq:Gi} has range in $\Omega$. 
An inspection of the gluing method 
(see \cite[Propositions 5.8.1 and 5.9.2]{Forstneric2017E}) shows that the 
spray $\wt F:L\times r\B^N\to X$ obtained by gluing $F$ with the $G_i$'s satisfies 
\[
	\wt F((C_i\cup D_i)\times \overline r\B^N)\subset 
	G_i((C_i\cup D_i)\times \overline \B^N) \subset\Omega
	\ \ \text{for every $i\in\Z_l$}.
\]
Since $L = \wt K \cup\, \bigcup_{i\in\Z_l} D_i$ (see \eqref{eq:Di}) and the holomorphic map 
$\tilde f=\wt F(\cdotp,0):L\to X$ approximates $f$ on $\wt K$, it follows that  
$\tilde f(L\setminus \mathring K)\subset \Omega$, so condition (iii) in the lemma holds.
\end{proof}

%
%
%
%
\section{Proof of Theorem \ref{th:main1}}\label{sec:proof}

In this section we prove the following result, which includes Theorem \ref{th:main1} 
as a special case with $\Omega=X$ (compare with Definition \ref{def:Oka1} of an
Oka-1 manifold). 

%
%

%
%
\begin{theorem}\label{th:main2}
Let $X$ be a complex manifold endowed with a complete distance function 
$\dist_X$, and let $\Omega \subset X$ be a domain which is  
densely dominable by tubes of lines (see Remark \ref{rem:dominable}). 
Assume that $R$ is an open Riemann surface, $K$ is a compact Runge set in $R$, 
$a_i \in R$ is a discrete sequence without repetitions, 
and $f:R\to X$ is a continuous map which is holomorphic on an open neighbourhood of 
$K\cup \bigcup_i\{a_i\}$ and satisfies $f(R\setminus\mathring K)\subset\Omega$.
Given $\epsilon>0$ and positive integers $k_i\in\N$, there is a holomorphic map $F:R\to X$ 
which is homotopic to $f$ and satisfies the following conditions:
\begin{enumerate}[\rm (i)]
\item $\sup_{p\in K}\dist_X(F(p),f(p))<\epsilon$, 
\item $F(R\setminus \mathring K) \subset \Omega$, and 
\item $F$ agrees with $f$ to order $k_i$ in the  point $a_i$ for every $i$.
\end{enumerate}
\end{theorem}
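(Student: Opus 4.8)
The plan is to run the standard induction of Oka theory: exhaust $R$ by compact Runge sets, fill in one topological piece at a time, and pass to the limit, with the noncritical pieces supplied by Lemma \ref{lem:noncritical} and the critical pieces by an adaptation of its proof. First I would invoke the Mergelyan theorem (Theorem \ref{th:Mergelyan}) to replace $f$ near $K$ by a map $F_0$ holomorphic on a neighbourhood, and enlarge $K$ to a smoothly bounded compact Runge set $K_0$ with $K\subset \mathring K_0$, $K$ a strong deformation retract of $K_0$, and $F_0(K_0\setminus\mathring K)\subset\Omega$ (possible since $f(K_0\setminus\mathring K)\subset \Omega$ is compact in the open set $\Omega$ and $F_0$ is close to $f$). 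Fix a smooth Morse exhaustion function $\rho\colon R\to\R$ with $K_0=\{\rho\le c_0\}$, and regular values $c_0<c_1<\cdots\to\sup\rho$ so that each slab $\{c_j\le\rho\le c_{j+1}\}$ contains at most one critical point, no $a_i$ lies on a critical level, and only finitely many $a_i$ lie in each $K_j=\{\rho\le c_j\}$. I would then build holomorphic maps $F_j$ on neighbourhoods of $K_j$ with $F_{j+1}$ approximating $F_j$ on $K_j$, agreeing with $f$ to order $k_i$ at every $a_i\in K_{j+1}$, satisfying $F_{j+1}(K_{j+1}\setminus\mathring K)\subset\Omega$, and homotopic to $f$ on $K_{j+1}$ rel these data.

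In the noncritical case, where $K_j$ is a strong deformation retract of $K_{j+1}$ and $K_{j+1}\setminus\mathring K_j$ is a union of annuli, the extension is given verbatim by Lemma \ref{lem:noncritical}, applied with the present $\Omega$ and $E$; its parts (i)--(iii) deliver the approximation on $K_j$, the interpolation to order $k_i$ at the finitely many $a_i\in K_j$, and the range condition $F_{j+1}(K_{j+1}\setminus\mathring K_j)\subset\Omega$. Interpolation points entering in $K_{j+1}\setminus K_j$ are handled at the critical step where their component first appears.

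The critical steps cross a single critical point $p_0$ of index $0$ or $1$, and the index-$1$ step is the heart of the matter. There $K_{j+1}$ is obtained from $K_j$, after a harmless noncritical enlargement, by attaching a compact disc $D$ along a boundary arc $\alpha\subset bK_j$ -- precisely the configuration of Lemma \ref{lem:bump} and Proposition \ref{prop:weakOka1}. I would treat it by repeating the construction in the proof of Lemma \ref{lem:noncritical}: after a small perturbation fixed to order $k_i$ at the $a_i$, general position (Remark \ref{rem:transversality}, using $\Hcal^{2n-1}(E)=0$) gives $f(bK_j)\subset\Omega\setminus E$; I subdivide $\alpha$ into short arcs $\alpha_i$, cover each $f(\alpha_i)$ by a submersion chart $(\omega_i,\sigma_i,U_i)$ with $U_i\subset\Omega\setminus E$ and $\sigma_i$ defined on an open subset of a spanning tube of lines $T_i\subset\C^{n_i}$ (available by dense dominability of $\Omega$), build a dominating holomorphic spray $F(\cdotp,t)$ over a neighbourhood of $K_j$ with core $F_j$ and fixed to order $k_i$ at the $a_i$, lift it through the submersion charts, extend the lifts across the subdiscs of $D$ by Lemma \ref{lem:bump} keeping their range in $T_i$ (hence their pushforward in $\Omega$), and glue $F$ with the extended sprays $G_i=\sigma_i\circ\tilde g_i$ into a spray $\wt F$ over $K_{j+1}$ via \cite[Propositions 5.8.1 and 5.9.2]{Forstneric2017E}; then $F_{j+1}=\wt F(\cdotp,0)$ works, with $F_{j+1}(D)\subset\Omega$ by construction. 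An index-$0$ critical point creates a new small disc component $C$; for $C$ small, $f(C)\subset\Omega$ lies in a single submersion chart $U\subset\Omega\setminus E$ coming from a tube of lines, so I lift $f|_C$ through $\sigma$, replace it by a holomorphic map into the tube that matches the finitely many prescribed jets of $f$ at the $a_i\in C$ (Hermite interpolation in $\C^{n_i}$, whose image stays in the tube for $C$ small), and push it down by $\sigma$; the result is automatically homotopic to $f|_C$ since $C$ is contractible.

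Finally I would pass to the limit $F=\lim_j F_j$. Choosing the errors $\epsilon_j$ summable and, at each stage, smaller than the distance from the compact set $F_j(K_j\setminus\mathring K)$ to $X\setminus\Omega$, completeness of $\dist_X$ yields a holomorphic $F\colon R\to X$ satisfying (i), while this distance bookkeeping keeps $F(R\setminus\mathring K)$ inside the open set $\Omega$, giving (ii); the jet conditions (iii) hold because they are imposed once each $a_i$ enters and preserved thereafter. The homotopy $F\simeq f$ is assembled from the steps $F_j\simeq F_{j+1}$ on $K_j$ (close maps) together with the extensions being performed within the homotopy class of $f$ over each newly attached cell, and by the remark following Definition \ref{def:Oka1} it may be taken to remain holomorphic near $K\cup\bigcup_i\{a_i\}$ and to respect the interpolation. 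The main obstacle is the index-$1$ step: it is the only place where the topology of the domain genuinely changes and where the weak, purely local dominability hypothesis on $\Omega$ must be converted, through sprays and Lemma \ref{lem:bump}, into an honest approximate extension of a holomorphic curve across a handle.
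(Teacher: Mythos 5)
Your overall framework (Morse exhaustion, noncritical steps via Lemma \ref{lem:noncritical}, summable errors with the distance-to-$X\setminus\Omega$ bookkeeping, limit map) matches the paper's proof, but your treatment of the index-$1$ critical step contains a genuine error. You claim that after a harmless noncritical enlargement, $K_{j+1}$ is obtained from $K_j$ by attaching a compact disc $D$ along a \emph{single} boundary arc $\alpha\subset bK_j$, ``precisely the configuration of Lemma \ref{lem:bump}''. This is topologically false: attaching a disc along one arc is a homotopy equivalence and never changes the homotopy type, whereas crossing an index-$1$ critical point always does. The true local picture is a band (a disc attached along \emph{two} disjoint arcs of $bK_j$), and your spray-and-gluing machinery does not apply to it: Lemma \ref{lem:bump} and the gluing propositions require the new disc to meet the old set along a single arc with the extension free on the complementary arc, while on a band the holomorphic extension must be close to the given map near \emph{both} attaching arcs, and no such spray exists before the map has been extended across the core of the band. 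This is exactly the step the paper handles differently (Case 2 of its proof): one attaches a smooth embedded arc $\lambda$ through the critical point with endpoints on $bK_j$, extends $f_j$ \emph{continuously} across $\lambda$ into $\Omega$ using the homotopy data coming from the globally defined map $f$, applies the Mergelyan theorem (Theorem \ref{th:Mergelyan}) on the admissible set $K_j\cup\lambda$, and only then is the remaining extension to $K_{j+1}$ a union of annuli, i.e.\ the noncritical case. Your plan never confronts this arc-crossing step, which is also where the hypothesis that $F$ be homotopic to $f$ enters essentially.

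A second, related gap: you defer interpolation points entering $K_{j+1}\setminus K_j$ to ``the critical step where their component first appears'', but new points $a_i$ typically enter at \emph{noncritical} steps (e.g.\ $R=\C$, $K_j$ an exhaustion by discs, $a_i$ a sequence escaping to infinity: every slab is an annulus). Lemma \ref{lem:noncritical} only interpolates at points of $\mathring K_j$ where the previous map is already defined, so it cannot make $F_{j+1}$ \emph{agree} with $f$ to order $k_i$ at a new point $a_i\in D_j$. The paper's Case 3 exists precisely for this: one connects a small disc $\Delta\subset U\cap\mathring D_j$ around $a_i$ (on which $f$ is holomorphic) to $K_j$ by an arc, sets the map equal to $f$ on $\Delta$, applies Mergelyan on $K_j\cup\Delta\cup\lambda$, and then runs the noncritical lemma with $a_i$ now an interior interpolation point. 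Without this (or an equivalent reorganization of the exhaustion), your induction cannot secure condition (iii) of the theorem.
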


\begin{proof}
%
%
%
Call $A=\{a_i\}_{i\in\N}$, and let $U\subset R$ be an 
open neighbourhood of $K\cup A$ such that the given map $f$ is holomorphic on $U$.
Pick a smooth strongly subharmonic Morse exhaustion function $\rho:R\to\R_+$ 
and an increasing sequence $0<c_1<c_2<\cdots$ diverging
to infinity such that the compact sets $K_j=\{\rho\le c_j\}$ for $j=1,2,\ldots$ 
satisfy the following conditions:
\begin{enumerate}[\rm (A)] 
\item $K\subset K_1\subset U$ and $A\cap K_1\subset K$.
\item The number $c_j$ is a regular value of $\rho$ 
and the smooth level set $\{\rho=c_j\}=bK_j$ is disjoint from $A$ for $j=1,2,\ldots$.
\item For every $j=1,2,\ldots$ the set 
$D_j=\{c_j\le \rho\le c_{j+1}\}=K_{j+1}\setminus \mathring K_j$
contains at most one critical point of $\rho$ or at most one point of $A$, but not both.
\end{enumerate}
The construction of such a function $\rho$ and a sequence $c_j$ is standard, 
using the fact that Morse critical points are isolated. 
Condition (A) is achieved by choosing $\rho$ and $c_1$ 
such that the set $K_1=\{\rho\le c_1\}$ is sufficiently close to $K$; this is possible since 
$K$ is Runge in $R$. Note that each $K_j=\{\rho\le c_j\}$ is a smoothly bounded Runge 
compact domain in $R$ and the sequence 
$K_1\Subset K_2\Subset \cdots \Subset \bigcup_{j=1}^\infty K_j = R$
is a normal exhaustion of $R$. 

Given a compact set $L\subset R$ and a pair of continuous maps $f,g:L\to X$, we write
\[
	d_L(f,g) = \max_{p\in L}\dist_X(f(p),g(p)). 
\]
%
%
Set $K_0=K$, $f_0=f|_K$, and 
$\epsilon_0=\epsilon/2$, where $K$ and $\epsilon>0$ are given in the statement. 
%
%
By the assumption, there is a closed subset $E\subset \Omega$ 
with $\Hcal^{2n-1}(E)=0$ such that $\Omega$ is dominable by a spanning tube 
of lines at every point of $\Omega\setminus E$.
By Remark \ref{rem:transversality}, there is a holomorphic map 
$f_1:K_1\to X$ which is $\epsilon_0$-close to $f$ on $K_1$, it agrees
with $f$ to order $k_i$ at every point $a_i\in A\cap K_{1}$, and it satisfies
\begin{equation}\label{eq:f1}
	f_1(K_1\setminus \mathring K_0)\subset\Omega\quad 
	\text{and}
	\quad f_1(bK_1)\subset \Omega\setminus E.
\end{equation}
Set $D_0=K_1\setminus \mathring K_0$. 
%
%
We shall inductively construct a sequence of holomorphic maps $f_j:K_j\to X$ 
and numbers $\epsilon_j>0$ satisfying the following conditions for each $j=1,2,\ldots$: 
\begin{enumerate}[\rm (a$_{j}$)] 
\item $d_{K_{j-1}}(f_{j},f_{j-1})<\epsilon_{j-1}$. 
\item $f_{j}(D_{j-1})\subset \Omega$
and $f_j(bK_j) \subset \Omega\setminus E$. 
(Here, $D_{j-1}=K_{j}\setminus \mathring K_{j-1}$ is given by (C).) 
\item $f_{j}$ agrees with $f$ to order $k_i$ at every point $a_i\in A\cap K_{j}$.
\item $f_{j}$ is homotopic to $f|_{K_{j}}:K_{j}\to X$
%
%
by a homotopy mapping $K_j\setminus \mathring K$ to $\Omega$.
\item $\epsilon_{j}< \frac12\min\{\epsilon_{j-1},\dist_X(f_{j}(D_{j-1}),X\setminus \Omega)\}$.
(If $\Omega=X$ then the second number under $\min$ is treated as $+\infty$.)
\end{enumerate}
%
%
The beginning of the induction is given by the map $f_1$ found above
and a number $\epsilon_1$ satisfying (e$_{1}$).
Assume that for some $j\in\N$ we have found maps $f_k$ and numbers 
$\epsilon_k$ for $k=1,\ldots,j$, and let us explain
how to find the next pair $(f_{j+1},\epsilon_{j+1})$. We distinguish cases.

%
%
\noindent {\em Case 1: $D_j$ does not contain any critical point of $\rho$ nor any point 
of $A$.} In this case, a map $f_{j+1}:K_{j+1}\to X$
satisfying (a$_{j+1}$)--(c$_{j+1}$) is given by Lemma \ref{lem:noncritical}.
%
%
Assuming as we may that the approximation in (a$_{j+1}$) is close enough, 
$f_{j+1}|_{K_j}$ is homotopic to $f_j$ by a homotopy staying close to $f_j$,
hence mapping $K_j\setminus \mathring K$ to $\Omega$. Since $K_j$ is a 
strong deformation retract of $K_{j+1}$ and $f(R\setminus \mathring K)\subset \Omega$, 
we obtain a homotopy from $f_{j+1}$ to $f$ satisfying (d$_{j+1}$). 
We then choose a number $\epsilon_{j+1}>0$ satisfying (e$_{j+1}$), 
thereby completing the induction step in this case.

%
%
\noindent {\em Case 2: $D_j$ contains a critical point $p$ of $\rho$.} Our assumptions 
imply that such a point $p$ is unique, it is contained in the interior 
$\mathring D_j=\mathring K_{j+1}\setminus K_j$, and $D_j\cap A=\varnothing$.
There is a compact smoothly embedded arc 
$\lambda \subset bK_j\cup \mathring D_j = \mathring K_{j+1}\setminus \mathring K_j$
passing through $p$ and having endpoints in $bK_j$ such that $\lambda$ intersects $bK_j$
only in these endpoints and the intersections are transverse, and the set $K_j\cup\lambda$
is a strong deformation retract of $K_{j+1}$. (A discussion of the possible 
changes of topology depending on the Morse index of $\rho$ at $p$ can be found in 
\cite[Section 1.4]{AlarconForstnericLopez2021}.) 
%
Condition (d$_{j}$) implies that we can extend $f_j$ smoothly across the arc $\lambda$ 
such that $f_j(\lambda)\subset\Omega$ and $f_j:K_j\cup\lambda\to X$ 
is homotopic to $f|_{K_j\cup\lambda}$ by a homotopy mapping 
$(K_j\cup\lambda)\setminus \mathring K$ to $\Omega$.
We apply Mergelyan theorem (see Theorem \ref{th:Mergelyan})
to approximate $f_j$ on $K_j\cup\lambda$ by a holomorphic map $\tilde f_j:V\to X$ 
on a neighbourhood $V\subset K_{j+1}$ of $K_j\cup\lambda$ such that 
$\tilde f_j(bK_j\cup \lambda) \subset\Omega$ and $\tilde f_j$ agrees with $f_j$ 
to the given order $k_i$ at all points $a_i\in A\cap K_j$. 
Then, there are a compact smoothly bounded neighbourhood $K'\subset V$ 
of $K_j\cup\lambda$ such that $\tilde f_j(K'\setminus \mathring K_j)\subset\Omega$
and $K_{j+1}\setminus K'$ is a finite union of annuli.
Applying Lemma \ref{lem:noncritical} as in Case 1 to the pair of sets $K'\subset K_{j+1}$ 
and the map $\tilde f_j$ gives a holomorphic map $f_{j+1}:K_{j+1}\to X$ satisfying 
conditions (a$_{j+1}$)--(d$_{j+1}$). 
Finally, we pick a number $\epsilon_{j+1}$ satisfying condition (e$_{j+1}$).
 
%
%
\noindent {\em Case 3: $D_j$ contains a point $a_i\in A$.} 
Our assumptions imply that such a point is unique and $D_j$ does not contain 
any critical point of $\rho$. Hence, $D_j$ is a finite union of annuli
and $K_j$ is a strong deformation retract of $K_{j+1}$. 
Let $U\subset R$ be the open set on which the initial  
continuous map $f:R\to X$ is holomorphic. Pick a closed disc 
$\Delta \subset U\cap \mathring D_j$ 
containing the point $a_i$ in its interior. Choose a smooth embedded arc 
$\lambda\subset bK_j\cup \mathring D_j\setminus \mathring \Delta$ 
connecting a point $p\in bK_{j}$ to a point $q\in b\Delta$ 
such that $\lambda$ intersects $bK_j$ and $b\Delta$ 
only at $p$ and $q$, respectively, and the intersections are transverse. 
The set $\wt K_j=K_j\cup \Delta\cup \lambda$ is then a strong deformation retract 
of $K_{j+1}$ and an admissible set for Mergelyan approximation. 
%
%
%
Since $f(R\setminus \mathring K)\subset \Omega$, condition (d$_{j}$) implies 
that $f_j(bK_j)\subset \Omega$ and $f(a_i)\in \Omega$ lie in the same connected
component of $\Omega$. Hence we can extend 
$f_j$ from $K_j$ to a smooth map $\tilde f_j:\wt K_j\to X$ which equals $f$ 
on $\Delta$ and $\tilde f_j$ is homotopic to $f|_{\wt K_j}$ by a homotopy mapping 
$\wt K_j\setminus \mathring K$ to $\Omega$. 
We can now complete the proof as in Case 2, 
first approximating the extended map $\tilde f_j:\wt K_j\to X$ by a holomorphic
map on a neighbourhood of $\wt K_j$ 
and then applying Lemma \ref{lem:noncritical} 
to find the next holomorphic map $f_{j+1}:K_{j+1}\to X$ satisfying 
(a$_{j+1}$) --(d$_{j+1}$). Finally we choose $\epsilon_{j+1}>0$ satisfying (e$_{j+1}$).

This completes the induction step in all cases. The theorem now follows by 
verifying that the limit map $F=\lim_{j\to\infty}f_j:R\to X$ exists and satisfies
the stated conditions. This is an obvious consequence of conditions 
(a$_j$)--(e$_j$) whose verification is left to the reader.
\end{proof}

%
%
\begin{remark}\label{rem:weakOka1}
It is obvious that the proof of Theorem \ref{th:main2} also gives Proposition
\ref{prop:weakOka1}. Indeed, Case 1 (the noncritical case) in the proof holds by the
Oka-1 property with approximation and interpolation in the  finitely many points
in the sublevel set $K_j=\{\rho\le c_j\}$, while the proof of Cases 2 and 3
only uses the Mergelyan theorem (see Theorem \ref{th:Mergelyan}). 
\end{remark}

%
%
%
%
\section{Functorial properties of Oka-1 manifolds}\label{sec:functorial}

In this section we study the behaviour of the Oka-1 property under certain natural 
operations in the category of complex manifolds, and its relationship to other flexibility 
properties of complex manifolds studied in the literature. 
A survey of these issues for the smaller class of Oka manifolds can be found in 
\cite[Chapter 7]{Forstneric2017E} and 
\cite{Forstneric2023Indag,ForstnericLarusson2014IMRN}.

It is obvious from the definition that an increasing union of Oka-1 manifolds
is an Oka-1 manifold. The proof of the following simple observation is left to the reader.

\begin{proposition}
The product $Z=X\times Y$ is an Oka-1 manifold if and only if $X$ and $Y$ 
are Oka-1 manifolds.
\end{proposition}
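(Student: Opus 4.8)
The plan is to prove both implications of the equivalence separately. For the forward direction, suppose $Z = X \times Y$ is an Oka-1 manifold; I would show each factor, say $X$, is Oka-1. The natural tool is the projection $\pi_X: Z \to X$, which is a holomorphic submersion (indeed a trivial fibre bundle with fibre $Y$). Given the Oka-1 data for $X$ — an open Riemann surface $R$, a Runge compact $K$, a discrete sequence $a_i$, a continuous map $f: R \to X$ holomorphic near $K \cup \bigcup_i\{a_i\}$, together with $\epsilon > 0$ and integers $k_i$ — I would lift $f$ to a map $\hat f = (f, c): R \to Z$, where $c: R \to Y$ is any continuous map that is holomorphic near $K \cup \bigcup_i\{a_i\}$ (for instance a constant). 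Applying the Oka-1 property of $Z$ to $\hat f$ yields a holomorphic $\hat F: R \to Z$ homotopic to $\hat f$, approximating on $K$ and interpolating the jets at the $a_i$. Then $F = \pi_X \circ \hat F: R \to X$ is holomorphic, and because $\pi_X$ is distance-decreasing up to a fixed constant (in the product metric $\dist_Z$ controls $\dist_X$), the approximation condition descends; the jet interpolation descends because $\pi_X$ is a projection and composing with it respects agreement to finite order at a point; and the homotopy $\hat F \simeq \hat f$ pushes forward to $F \simeq f$. A symmetric argument handles $Y$.

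For the reverse direction, suppose both $X$ and $Y$ are Oka-1; I would show $Z = X \times Y$ is Oka-1. Given Oka-1 data for $Z$ — a map $f = (f^X, f^Y): R \to Z$, where $f^X = \pi_X \circ f$ and $f^Y = \pi_Y \circ f$ — the key observation is that the two factors decouple completely. I would apply the Oka-1 property of $X$ to the data $(R, K, \{a_i\}, f^X, \epsilon', \{k_i\})$ to obtain a holomorphic $F^X: R \to X$ homotopic to $f^X$ with the approximation and jet conditions, and independently apply the Oka-1 property of $Y$ to $(R, K, \{a_i\}, f^Y, \epsilon', \{k_i\})$ to obtain $F^Y: R \to Y$. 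Setting $F = (F^X, F^Y): R \to Z$ gives a holomorphic map. The product homotopies $F^X \simeq f^X$ and $F^Y \simeq f^Y$ combine coordinatewise into a homotopy $F \simeq f$. The approximation condition follows by choosing $\epsilon'$ so that the product distance satisfies $\dist_Z((F^X, F^Y), (f^X, f^Y)) < \epsilon$ whenever each factor is within $\epsilon'$ (e.g. $\epsilon' = \epsilon/2$ for a sum-type or max-type product metric), and the jet interpolation at each $a_i$ holds coordinatewise since a map into a product agrees to order $k_i$ if and only if each component does.

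The main technical point to handle carefully is the compatibility of the Riemannian distance functions, since the Oka-1 definition is phrased in terms of a fixed $\dist_Z$ on $Z$. I would fix a product metric $g_Z = g_X \oplus g_Y$ (or note that any two Riemannian metrics on a manifold are locally comparable and that the Oka-1 property is metric-independent, which one may record as a preliminary remark) so that the comparisons $\dist_X \le \dist_Z$ and $\dist_Z \le \dist_X + \dist_Y$ hold and make the $\epsilon$-bookkeeping routine. Apart from this, both directions are essentially formal consequences of the coordinatewise structure of maps into a product, so no serious obstacle is expected; the only thing to verify with a little care is that ``holomorphic near $K \cup \bigcup_i\{a_i\}$'' and ``homotopic'' are preserved under composing with projections and under taking products, both of which are immediate. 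This is why the statement is left to the reader in the paper.
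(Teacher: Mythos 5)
Your proof is correct, and since the paper explicitly leaves this ``simple observation'' to the reader, your argument---projecting via $\pi_X$ after filling in a constant second component for the forward direction, and combining the two factorwise solutions coordinatewise for the converse, with the routine bookkeeping for the product metric and the metric-independence of the Oka-1 property---is exactly the intended one. No gaps: approximation, jet interpolation, and the homotopy condition all pass through projections and products as you say.
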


Next, we look at the following problem. 
Let $X$ and $Y$ be connected complex manifolds.
Under which conditions on a surjective holomorphic map $h:X\to Y$ 
is $X$ an Oka-1 manifold if (and only if) $Y$ is an Oka-1 manifold?

Our first result in this direction concerns unramified holomorphic covering projections.

%
%
\begin{proposition}\label{prop:ascent}
Let $h:X\to Y$ be a holomorphic covering projection.
\begin{enumerate}[\rm (a)]
\item If $Y$ is an Oka-1 manifold, then $X$ is an Oka-1 manifold. 
\item If $X$ has the Oka-1 property for complex lines $\C\to X$, then so does $Y$.
\end{enumerate}
\end{proposition}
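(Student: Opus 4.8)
The plan is to exploit the two defining features of a holomorphic covering projection: it is a local biholomorphism, so it transports holomorphicity and jets in both directions, and it is a fibration enjoying the unique path and homotopy lifting property. It is convenient to first equip $X$ with the pullback metric $h^*g_Y$ of a Riemannian metric $g_Y$ inducing $\dist_Y$; then $h$ becomes a local isometry, hence $1$-Lipschitz, and $\dist_X$ coincides with $\dist_Y$ on evenly covered balls. Since the Oka-1 property is insensitive to the choice of Riemannian distance (two such are uniformly comparable on a neighbourhood of any compact set, and the approximation condition only concerns pairs of nearby points), I may assume throughout that $\dist_X$ is this pullback distance.

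For part (a), I would fix the data $R,K,(a_i),f,\epsilon,(k_i)$ for $X$ and set $g=h\circ f:R\to Y$, which is continuous and holomorphic near $K\cup\bigcup_i\{a_i\}$. As $f(K)$ is compact, I may choose $\delta\in(0,\epsilon)$ so small that for every $x\in f(K)$ the map $h$ restricts to an isometry of $B_X(x,\delta)$ onto $B_Y(h(x),\delta)$. Using that $Y$ is Oka-1, I would produce a holomorphic $G:R\to Y$, homotopic to $g$, agreeing with $g$ to order $k_i$ at each $a_i$, and joined to $g$ by a homotopy $g_t$ ($g_0=g$, $g_1=G$) which is holomorphic near $K\cup\bigcup_i\{a_i\}$, has constant $k_i$-jet at each $a_i$, and stays $\delta$-close to $g$ on $K$, i.e. $\sup_{p\in K,\,t}\dist_Y(g_t(p),g(p))<\delta$. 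By the homotopy lifting property of $h$, I lift $g_t$ to a homotopy $f_t:R\to X$ with $f_0=f$; then $F:=f_1$ satisfies $h\circ F=G$, is holomorphic (a continuous lift of a holomorphic map through a local biholomorphism), and is homotopic to $f$. Near each $a_i$ one has $f_t=(h|_{\mathrm{sheet}})^{-1}\circ g_t$, so the $k_i$-jet of $f_t$ at $a_i$ is constant and equal to that of $f$, giving interpolation. For the approximation, since $t\mapsto g_t(p)$ stays in $B_Y(g(p),\delta)$ for $p\in K$, the lifted path $t\mapsto f_t(p)$ stays in the single sheet $B_X(f(p),\delta)$, whence $\dist_X(F(p),f(p))=\dist_Y(G(p),g(p))<\delta<\epsilon$.

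Part (b) is easier, precisely because the domain is simply connected. Given data $R=\C,K,(a_i),g,\epsilon,(k_i)$ for $Y$, the lifting criterion is vacuous ($\pi_1(\C)=0$), so $g$ lifts to a continuous $\tilde g:\C\to X$ with $h\circ\tilde g=g$, again holomorphic near $K\cup\bigcup_i\{a_i\}$. Applying the Oka-1 property of $X$ for maps $\C\to X$ yields a holomorphic $\tilde G:\C\to X$, homotopic to $\tilde g$, with $\sup_K\dist_X(\tilde G,\tilde g)<\epsilon$ and $k_i$-jet agreement at each $a_i$. Then $G:=h\circ\tilde G:\C\to Y$ is holomorphic and homotopic to $h\circ\tilde g=g$; it agrees with $g$ to order $k_i$ at $a_i$ because composing with the holomorphic $h$ preserves jet agreement, and $\sup_K\dist_Y(G,g)\le\sup_K\dist_X(\tilde G,\tilde g)<\epsilon$ since $h$ is $1$-Lipschitz.

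I expect the main obstacle to be the refinement used in part (a): arranging that the descending homotopy $g_t$ stays uniformly $\delta$-close to $g$ on $K$, so that its lift selects the correct sheet and the approximation survives the lifting. Merely knowing that $G$ is close to $g$ on $K$ is insufficient, since an uncontrolled homotopy could carry the lift onto a different sheet and destroy the estimate. I would secure the close homotopy either by reading it off from the iterative construction in the proof of Theorem \ref{th:main2}, where each correction stays close to the previous map on the fixed part $K$ and the error radii are summably small, or, near $K$, by replacing the given homotopy with the short geodesic homotopy between the two holomorphic maps $g$ and $G$, which are already $\delta$-close there. Everything else—holomorphicity of lifts, descent of jets, and the $1$-Lipschitz estimate—is routine once the covering-space machinery is in place.
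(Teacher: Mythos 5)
Your part (b) is correct and essentially coincides with the paper's argument (lift $g$ to $X$ using simple connectivity, approximate upstairs, push back down with $h$). Part (a), however, has a genuine gap, and it is exactly at the point you flag as the ``main obstacle''. Definition \ref{def:Oka1} only asserts the existence of \emph{some} holomorphic $G:R\to Y$ homotopic to $g$, close to $g$ on $K$, with matching jets; it gives you no control over the homotopy, and for the $G$ it hands you, a homotopy staying $\delta$-close to $g$ on $K$ need not exist at all. Concretely, take $h=\exp:\C\to\C^*$, $R=\C$, $K=K_1\sqcup K_2$ two disjoint closed discs (Runge in $\C$), $f\equiv 0$, $g\equiv 1$, and interpolation points $a_1\in K_1$, $a_2\in K_2$. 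The map $G=e^{\eta}$, where $\eta$ is entire with $\eta\approx 0$ on $K_1$, $\eta\approx 2\pi\I$ on $K_2$, $\eta(a_1)=0$, $\eta(a_2)=2\pi\I$ (Runge's theorem), is a perfectly valid output of the Oka-1 property of $\C^*$: it is holomorphic, homotopic to $g$ via $e^{t\eta}$, uniformly close to $1$ on $K$, and equals $g$ at $a_1,a_2$. But its lifts through $\exp$ are exactly $\eta+2\pi\I k$, $k\in\Z$, and \emph{no} lift is close to $f$ on both components of $K$, nor agrees with $f$ at both $a_1$ and $a_2$. Consequently no homotopy from $g$ to this $G$ can stay close to $g$ on $K$ (lifting such a homotopy would produce the nonexistent good lift), so neither of your repairs can succeed: the geodesic homotopy is defined only on a neighbourhood of $K$ and cannot be extended to a homotopy from $g$ to $G$ on $R$ (in this example none exists), and the ``iterative construction in the proof of Theorem \ref{th:main2}'' is not available for $Y$, since $Y$ is only assumed to be Oka-1, not densely dominable by tubes of lines, so there is no construction to read the homotopy off from.

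The paper avoids this global topological obstruction by never lifting a map defined on all of $R$. It verifies for $X$ the hypothesis of Proposition \ref{prop:weakOka1}: one only needs to approximate a holomorphic map $f:K\to X$ by holomorphic maps on $L=K\cup D$, where $D$ is a disc attached along an arc, so that $K\hookrightarrow L$ is a homotopy equivalence. One projects $g=h\circ f$, approximates by $\tilde g:L\to Y$ (with at least one interpolation point placed in \emph{each} connected component of $K$), and then lifts $\tilde g$ \emph{separately on each component of $L$}, anchoring the lift at the interpolation point of that component; since $\pi_1(K_j)\to\pi_1(L_j)$ is an isomorphism and $\tilde g|_{K_j}$ is homotopic to the liftable map $g|_{K_j}$ by closeness, each such lift exists, is unique, and is automatically close to $f$ and jet-matching. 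No consistency between components and no homotopy on $R$ is ever needed, because the required map only has to live on $L$; the globalization on $R$ (including the homotopy $F\simeq f$ demanded by Definition \ref{def:Oka1}) is then delivered wholesale by the machinery behind Proposition \ref{prop:weakOka1}. In your scheme the missing ingredient is precisely this reduction: without it, the claim that the Oka-1 property of $Y$ can be upgraded to provide a homotopy that is uniformly small on $K$ is false as stated, and part (a) does not go through.
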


\begin{proof}
To prove part (a), assume that $Y$ is an Oka-1 manifold. Let $K\subset L=K\cup D$ be 
compact sets in an open Riemann surface $R$ as in Proposition \ref{prop:weakOka1}, 
and let $f:K\to X$ be a holomorphic map. Then, the projection 
$g=h\circ f:K\to Y$ (see \eqref{eq:lifting}) 
can be approximated uniformly on $K$ by holomorphic maps $\tilde g:L\to Y$, 
with interpolation in given finitely many points $a_1,\ldots, a_m\in K$. We may assume 
that there is at least one point $a_i$ in each connected component of $K$.
If the approximation is close enough then $\tilde g$ is homotopic to $g$ on $K$,
and hence it lifts to a unique holomorphic map $\tilde f:L\to X$ which agrees with $f$ 
at the points $a_1,\ldots, a_m$ and approximates $f$ on $K$.
Hence, $X$ is Oka-1 by Proposition \ref{prop:weakOka1}.

This argument fails in the opposite direction since a map $K\to Y$ need not lift 
to a map $K\to X$, unless the set $K$ is simply connected. 
In the latter case, every holomorphic map $K\to Y$ admits a holomorphic lift $K\to X$.
To prove (b), assume that $X$ has the Oka-1 property for complex lines $\C\to X$. 
Let $K\subset\C$ be a smoothly bounded Runge compact set, hence simply connected, 
and let $g:K\to Y$ be a holomorphic map. Since $K$ is simply connected, $g$ lifts 
to a holomorphic map $f:K\to X$. By the assumption we can approximate 
$f$ as closely as desired by a holomorphic map $\tilde f: \C\to X$ which 
agrees with $f$ to a given order $k$ in the points $a_1,\ldots, a_m\in K$. 
The holomorphic map $\tilde g:=h\circ \tilde f:\C\to Y$ 
then approximates $g$ on $K$ and agrees with $g$ to order $k$ 
in the points $a_1,\ldots, a_m$. Applying this argument 
inductively on an increasing sequence of discs  exhausting $\C$ gives (b).
\end{proof}

\begin{problem}\label{prob:coverings}
If $h:X\to Y$ is a holomorphic covering projection and $X$ is an Oka-1 manifold, 
is $Y$ an Oka-1 manifold?
\end{problem}

On the other hand, since a tube of lines is simply connected, we have the following 
observation concerning the sufficient condition for Oka-1 manifolds in Theorem \ref{th:main1}.

\begin{proposition}\label{prop:coverings}
If $h: X\to Y$ is a holomorphic covering space, then $X$ is dominable by tubes of lines
if and only $Y$ is dominable by tubes of lines. 
The same holds for dense and strong dominabilities by tubes of lines.
\end{proposition}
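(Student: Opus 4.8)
The plan is to exploit two structural facts about the holomorphic covering projection $h\colon X\to Y$: it is a local biholomorphism, so its differential is an isomorphism at every point, and a holomorphic map from a simply connected source lifts through it to a holomorphic map. Since every tube of lines $T\subset\C^N$ is connected, locally path connected, and simply connected, the standard covering-space lifting criterion applies: any holomorphic $G\colon T\to Y$ admits a continuous lift $F\colon T\to X$ with $h\circ F=G$, and $F$ may be prescribed to hit any chosen point of the fibre $h^{-1}(G(z_0))$. This lift is automatically holomorphic, since locally it coincides with $(h|_V)^{-1}\circ G$ for a local holomorphic inverse of $h$. This yields the two basic transfer statements I will establish first. (I) If $G\colon T\to Y$ dominates at $z_0$ (that is, $dG_{z_0}$ is surjective) and $x\in h^{-1}(G(z_0))$, then the lift $F$ with $F(z_0)=x$ satisfies $dG_{z_0}=dh_x\circ dF_{z_0}$, so $dF_{z_0}$ is surjective and $X$ is dominable by $T$ at $x$. (II) If $F\colon T\to X$ dominates at $z_0$, then $G=h\circ F$ satisfies $dG_{z_0}=dh_{F(z_0)}\circ dF_{z_0}$, so $Y$ is dominable by $T$ at $h(F(z_0))$.

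From (I) and (II) the pointwise equivalence of dominability is immediate, and since $h$ is surjective with nonempty fibres it upgrades directly to the plain and the strong versions: $Y$ is (strongly) dominable by tubes of lines if and only if, for every relevant $y$, some point of the fibre $h^{-1}(y)$ is a point of dominability for $X$, which is precisely (strong) dominability of $X$ via (I) and (II).

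The delicate case, and the main obstacle, is dense dominability, where one must track the exceptional $\Hcal^{2n-1}$-null sets, with $n=\dim X=\dim Y$. For the descent, given the bad set $E\subset Y$ for $Y$ I take $E'=h^{-1}(E)$; as $X$ is second countable it has a countable cover by open sets on which $h$ is biholomorphic, and biholomorphisms are locally Lipschitz and hence preserve $\Hcal^{2n-1}$-null sets, so $\Hcal^{2n-1}(E')=0$, while (I) gives dominability of $X$ at every $x\notin E'$. For the ascent, given the bad set $E\subset X$ for $X$ the correct candidate is
\[
  E'=\{\,y\in Y : h^{-1}(y)\subset E\,\}=Y\setminus h(X\setminus E).
\]
Because $h$ is an open map and $X\setminus E$ is open, $E'$ is closed; and $E'\subset h(E)$, which is $\Hcal^{2n-1}$-null by the same countable-cover argument applied to $E$. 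For $y\notin E'$ the fibre $h^{-1}(y)$ meets $X\setminus E$, so (II) makes $Y$ dominable at $y$, and $Y$ is densely dominable.

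I expect the only genuine care to be needed in verifying that preimages and images of $\Hcal^{2n-1}$-null sets remain null, and that $E'$ in the ascent is genuinely closed; the latter is exactly where the open-mapping property of $h$ is essential, and the former relies on the local Lipschitz character of the local biholomorphisms together with second countability.
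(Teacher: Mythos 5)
Your proof is correct and follows exactly the route the paper has in mind: the paper gives no formal proof, justifying the proposition only by the remark that a tube of lines is simply connected, so that holomorphic maps from tubes lift through the covering $h$ and dominability transfers both ways because $h$ is a local biholomorphism. Your write-up fills in precisely these details, and in addition supplies the measure-theoretic bookkeeping for the dense case (closedness of $E'=Y\setminus h(X\setminus E)$ via openness of $h$, and preservation of $\Hcal^{2n-1}$-null sets under the locally Lipschitz local biholomorphisms using second countability), which the paper leaves implicit.
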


%
%
Recall that a complex manifold $X$ is said to be {\em Liouville} if it carries no nonconstant 
negative plurisubharmonic functions, and {\em strongly Liouville} if the universal 
covering space of $X$ is Liouville. As remarked in the introduction, every 
Oka-1 manifold is Liouville. Proposition \ref{prop:ascent} gives the 
following stronger conclusion. 

\begin{corollary}\label{cor:SL}
Every Oka-1 manifold is strongly Liouville.
\end{corollary}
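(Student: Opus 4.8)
The plan is to obtain the strong Liouville property as an essentially immediate consequence of two facts already available: that the class of Oka-1 manifolds ascends along covering projections, recorded in Proposition \ref{prop:ascent}(a), and that every Oka-1 manifold is Liouville, observed in the introduction. Concretely, unwinding the definition, $X$ is \emph{strongly Liouville} precisely when its universal covering space is Liouville, so the whole statement reduces to verifying that this universal cover is Liouville.

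The execution is short. Let $X$ be a connected Oka-1 manifold and let $\pi\colon\widehat X\to X$ be its universal covering. Since $X$ is a complex manifold, $\widehat X$ carries a unique complex structure making $\pi$ a holomorphic (unramified) covering projection, and $\widehat X$ is again connected. The key step is to apply Proposition \ref{prop:ascent}(a) to this covering: taking $X$ itself as the base (which is Oka-1 by hypothesis), the proposition yields that the total space $\widehat X$ is an Oka-1 manifold. Finally, invoking the observation that every Oka-1 manifold is Liouville---no nonconstant negative plurisubharmonic function, equivalently every plurisubharmonic function bounded above is constant, because through every point and in every tangent direction there is an entire curve $\C\to X$ along which such a function restricts to a bounded-above, hence constant, subharmonic function on $\C$---we conclude that $\widehat X$ is Liouville. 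By definition this is exactly the assertion that $X$ is strongly Liouville, proving Corollary \ref{cor:SL}.

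I expect no genuine analytic obstacle here, since the real content is already isolated in Proposition \ref{prop:ascent}(a); the argument is purely a matter of combining it with the Liouville property and reading off the definition. The only points deserving care are bookkeeping ones: confirming that the universal cover inherits a holomorphic covering structure so that the proposition literally applies, and matching the \emph{direction} of the ascent statement, namely that being Oka-1 passes from the base to the total space, which is precisely the direction we need. Should one prefer to reprove the implication ``Oka-1 $\Rightarrow$ Liouville'' from scratch rather than cite the introduction, the single nontrivial step would be upgrading ``constant along every entire curve'' to ``globally constant,'' which requires a short connectivity argument exploiting that entire curves emanate from every point in every tangent direction so that the level sets of a negative plurisubharmonic function are open.
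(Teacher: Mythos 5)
Your proof is correct and is precisely the paper's argument: the paper derives Corollary \ref{cor:SL} by combining Proposition \ref{prop:ascent}(a) (the Oka-1 property ascends to covering spaces, in particular to the universal cover) with the observation from the introduction that every Oka-1 manifold is Liouville. Your additional sketch of why Oka-1 implies Liouville, via entire curves in every direction through every point, also matches the paper's reasoning.
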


Next, we show that the class of Oka-1 manifolds is invariant under 
Oka maps with connected fibres. 
We recall this notion, referring to \cite[Sect.\ 7.4]{Forstneric2017E} and
\cite[Sect.\ 3.6]{Forstneric2023Indag} for a more detailed presentation.

A holomorphic map $h:X\to Y$ of complex manifolds 
is said to enjoy the {\em parametric Oka property with approximation and interpolation}
(POPAI) if for every holomorphic map $g:S\to Y$ from a Stein manifold $S$, 
any continuous lifting $f_0:S\to X$ is homotopic through 
liftings of $g$ to a holomorphic lifting $f:S \to X$ as in the following diagram, 
\begin{equation}\label{eq:lifting}
	\xymatrixcolsep{5pc}
   	 \xymatrix{  & X \ar[d]^{h} \\ S \ar[r]^{\ \ g} \ar@{-->}[ur]^{f} & Y}
\end{equation}
with approximation on a compact $\Oscr(S)$-convex subset of $S$
and interpolation on a closed complex subvariety of $S$ on which $f_0$ is 
holomorphic. Furthermore, the analogous conditions must hold for families 
of maps $g_p:S\to Y$ depending continuously on  a parameter $p$ in a 
compact Hausdorff space; see
\cite[Definitions 7.4.1 and 7.4.7]{Forstneric2017E} for the details.

%
%
A holomorphic map $h:X\to Y$ is said to be an {\em Oka map} if it enjoys POPAI and 
is a Serre fibration (see \cite{Larusson2004,Forstneric2010CR} and 
\cite[Definition 7.4.7]{Forstneric2017E}). Assuming that $Y$ is connected, 
such a map is necessarily a surjective submersion and its fibres are Oka manifolds 
(see \cite[Proposition 3.14]{Forstneric2023Indag}). 
In particular, the constant map $X\to point$ 
is an Oka map if and only if $X$ is an Oka manifold. 
More generally, if $h:X\to Y$ is an Oka map then $X$ is an Oka manifold if and only if 
$Y$ is an Oka manifold (see \cite[Theorem 3.15]{Forstneric2023Indag}). 


%
%
\begin{theorem} \label{th:updown}
Let $h:X\to Y$ be an Oka map between connected complex manifolds.
\begin{enumerate}[\rm (a)]
\item If $Y$ is an Oka-1 manifold then $X$ is an Oka-1 manifold.
\item If $X$ is an Oka-1 manifold and the homomorphism 
$h_*:\pi_1(X)\to \pi_1(Y)$ of fundamentals groups is surjective, 
then $Y$ is an Oka-1 manifold.
\item If $h:X\to Y$ is a holomorphic fibre bundle projection with a connected Oka fibre, 
then $X$ is an Oka-1 manifold if and only if $Y$ is an Oka-1 manifold.
\end{enumerate}
\end{theorem}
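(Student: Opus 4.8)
The plan is to derive all three parts from the characterization of the Oka-1 property in Proposition \ref{prop:weakOka1}, by transferring maps through $h$ (downward for part (a), upward for part (b)) and using the parametric Oka property POPAI of the Oka map $h$ to restore holomorphicity of the transferred maps. Throughout I fix a pair $K\subset L=K\cup D$ in an open Riemann surface $R$ as in Proposition \ref{prop:weakOka1}, so that $D$ is a disc attached to $K$ along a boundary arc $\alpha$; in particular $K$ is a strong deformation retract of $L$, and I may choose a Stein neighbourhood $S\subset R$ of $L$ in which $K$ is $\Oscr(S)$-convex. Recall that, $Y$ being connected, $h$ is a surjective submersion and a Serre fibration with Oka fibres.

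For part (a), assume $Y$ is Oka-1 and let $f\colon K\to X$ be holomorphic. I would set $g=h\circ f$ and use that $Y$ is Oka-1, via Proposition \ref{prop:weakOka1}, to approximate $g$ uniformly on $K$ by a holomorphic $\tilde g\colon L\to Y$ agreeing with $g$ to the prescribed order at the interpolation points $A\subset K$. The task is then to lift $\tilde g$ to $X$ while controlling it on $K$ and $A$. Since $h$ is a submersion and $\tilde g|_K$ is $C^0$-close to $h\circ f$, I first correct $f$ to a continuous lift of $\tilde g|_K$ close to $f$; near each $a_i\in A$ I replace it by a holomorphic local lift of $\tilde g$ whose $k$-jet equals that of $f$ (such a lift exists because $\tilde g$ matches $h\circ f$ to order $k$ at $a_i$ and $h$ is a submersion). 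Using that $h$ is a Serre fibration and that $D$ retracts onto $\alpha\subset K$, I extend this lift over $D$ by the homotopy lifting property to a continuous lift $f_0\colon L\to X$ of $\tilde g$ that approximates $f$ on $K$ and is holomorphic near $A$ with the correct jets. Applying POPAI to $f_0$ over the Stein set $S$ then produces a holomorphic lift $\tilde f\colon L\to X$ of $\tilde g$ approximating $f_0$ (hence $f$) on the $\Oscr(S)$-convex set $K$ and jet-interpolating $f$ at $A$; Proposition \ref{prop:weakOka1} concludes that $X$ is Oka-1.

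For part (b) I run the same scheme in reverse, and this is where surjectivity of $h_*$ enters. Writing $F$ for the fibre, the homotopy exact sequence $\pi_1(X)\xrightarrow{h_*}\pi_1(Y)\to\pi_0(F)\to\pi_0(X)$ and connectivity of $X$ show that $h_*$ surjective is equivalent to $F$ being connected. Given holomorphic $g\colon K\to Y$, I would first produce a continuous lift $f_0\colon K\to X$ of $g$: as $K$ deformation retracts onto a finite graph $K_0$ and $F$ is path connected, the pullback fibration $g^{*}X\to K_0$ trivializes over each contractible cell, so a section extends cell by cell with no obstruction, and the resulting lift of $g|_{K_0}$ extends over $K$ by the homotopy lifting property. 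I then make the lift holomorphic by applying POPAI over $S$, obtaining a holomorphic lift $f\colon K\to X$ of $g$. Now $X$ is Oka-1, so Proposition \ref{prop:weakOka1} approximates $f$ on $K$ by a holomorphic $\tilde f\colon L\to X$ agreeing with $f$ to order $k$ at $A$. Pushing down, $\tilde g:=h\circ\tilde f\colon L\to Y$ is holomorphic, approximates $h\circ f=g$ on $K$, and jet-interpolates $g$ at $A$ (composition with the fixed holomorphic $h$ preserves $k$-jet agreement); hence $Y$ is Oka-1 by Proposition \ref{prop:weakOka1}.

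Part (c) follows by specialization: a holomorphic fibre bundle with Oka fibre is an Oka map (it is a Serre fibration satisfying POPAI; see \cite[Sect.\ 7.4]{Forstneric2017E}), so part (a) gives $Y$ Oka-1 $\Rightarrow$ $X$ Oka-1. For the converse, a bundle with connected fibre $F$ has $h_*\colon\pi_1(X)\to\pi_1(Y)$ surjective by the same exact sequence $\pi_1(X)\to\pi_1(Y)\to\pi_0(F)=0$, so part (b) applies and yields $X$ Oka-1 $\Rightarrow$ $Y$ Oka-1. I expect the main obstacle to be the lifting step in part (b): transferring a map $g\colon K\to Y$ across the topologically nontrivial set $K$ is possible exactly when the fibre is connected, which is precisely the content of the $\pi_1$-surjectivity hypothesis; once a continuous lift exists, POPAI and Proposition \ref{prop:weakOka1} carry out the analytic work. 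No such hypothesis is needed in part (a), since there one begins with an already given lift $f$ and only has to correct and extend it.
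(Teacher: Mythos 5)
Your overall strategy---reducing all three parts to Proposition \ref{prop:weakOka1} and transferring maps through $h$ via its Oka property---is exactly the paper's, and your parts (b) and (c) are correct and essentially identical to the paper's argument: the paper lifts a holomorphic map $g:K\to Y$ continuously using $\pi_1$-surjectivity (each component of $K$ has the homotopy type of a bouquet of circles), deforms the lift to a holomorphic one by the Oka property of $h$, applies the Oka-1 property of $X$, and pushes the result down by $h$; your detour through connectedness of the fibre via the homotopy exact sequence is an equivalent route, and your part (c) matches the paper's citation of the fact that a fibre bundle with Oka fibre is an Oka map.

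The genuine gap is in part (a), at the moment you apply POPAI to the lift $f_0:L\to X$ of $\tilde g$. Your $f_0$ is merely continuous on $K$ (holomorphic only near the finite set $A$), but the approximation clause of POPAI---see \cite[Definitions 7.4.1 and 7.4.7]{Forstneric2017E}, to which the paper's Section \ref{sec:functorial} refers---only guarantees approximation on a compact $\Oscr(S)$-convex set when the initial lifting is holomorphic on a neighbourhood of that set. For a lift that is only continuous on $K$, the homotopy through liftings produced by POPAI need not stay uniformly close to $f_0$ there, so the holomorphic lift $\tilde f$ you obtain need not satisfy $\sup_{p\in K}\dist_X(\tilde f(p),f(p))<\epsilon$, and condition (a) of Proposition \ref{prop:weakOka1} is not verified. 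This is precisely where the paper inserts an extra ingredient: using a holomorphic family of holomorphic retractions onto the fibres of $h$ (\cite[Lemma 3.16]{Forstneric2023Indag}), it first constructs a genuinely \emph{holomorphic} lift $f_1$ of $\tilde g$ on a neighbourhood of $K$ which is uniformly close to $f$ and has the prescribed jets at the points of $A$, and only then invokes the Oka property of $h$, with $f_1$ as the holomorphic-near-$K$ initial lift, to produce a holomorphic $\tilde f:L\to X$ with $h\circ\tilde f=\tilde g$, approximation on $K$, and jet interpolation at $A$. Your argument becomes correct once your continuous correction of $f$ over $K$ is replaced by this holomorphic correction; making the lift holomorphic only near $A$ is not enough, and without a tool such as the cited retraction lemma even the continuous correction "close to $f$" is not automatic.
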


\begin{proof} 
Fix a pair of compact sets $K\subset L=K\cup D$ in an open Riemann surface $R$ 
as in Proposition \ref{prop:weakOka1}, where $D$ is a disc attached to $K$ along 
a boundary arc $\alpha=bD\cap bK$. Also, let $A$ be a finite subset of $K$
and $k\in \N$.

\noindent {\em Proof of (a).} Let $f_0:K\to X$ be a holomorphic map.
Since $Y$ is an Oka-1 manifold, the holomorphic map $h\circ f_0:K\to Y$ 
can be approximated uniformly on $K$ by holomorphic maps $g:L\to Y$ 
with interpolation to order $k$ in the points of $A$. Assuming that the approximation 
is close enough, we see as in \cite[proof of Theorem 3.15]{Forstneric2023Indag} that there 
is a holomorphic map $f_1:K \to X$ which is uniformly close to $f_0$ on $K$, it agrees
with $f_0$ to order $k$ in the points of $A$, 
and it satisfies $h\circ f_1=g$ on $K$; i.e., $f_1$ is a lifting of $g$, 
see \eqref{eq:lifting}. (The construction of $f_1$ uses a holomorphic family 
of holomorphic retractions on the fibres of $h$, 
provided by \cite[Lemma 3.16]{Forstneric2023Indag}.)
Since $g:L \to Y$ is a holomorphic map, $f_1$ is a 
holomorphic lifting of $g$ over $K$, and $h:X\to Y$ is an Oka map, 
we can approximate $f_1$ (and hence $f_0$) uniformly on 
$K$ by holomorphic maps $f:L \to X$ satisfying $h\circ f=g$ such that 
$f$ agrees with $f_1$ to order $k$ in the points of $A$. 
Hence, Proposition \ref{prop:weakOka1} shows that $X$ is an Oka-1 manifold.  

\noindent {\em Proof of (b).} 
Note that each connected component of $K$ has the homotopy type of a
finite bouquet of circles. The assumption that the homomorphism 
$h_*:\pi_1(X,x)\to \pi_1(Y,h(x))$ is surjective for some 
(and hence for all) $x\in X$ therefore implies that 
every holomorphic map $g:K\to Y$ lifts to a continuous map $f_0:K\to X$ such that 
$h\circ f_0=g$. Since $h$ is an Oka map, we can homotopically 
deform $f_0$ to a holomorphic map $f_1:K\to X$ satisfying $h\circ f_1=g$. 
Since $X$ is an Oka-1 manifold, we can approximate $f_1$ as closely as desired 
uniformly on $K$ by a holomorphic map $\tilde f:L\to X$ which agrees with $f_1$ 
to order $k$ in the points of $A$. The holomorphic map $\tilde g=h\circ \tilde f:L\to Y$ 
then approximates $g$ uniformly on $K$, and it agrees with $g$ to order 
$k$ in the  points of $A$. Hence, $Y$ is an Oka-1 manifold 
by Proposition \ref{prop:weakOka1}.

\smallskip
\noindent {\em Proof of (c).} 
This follows from (a) and (b) by noting that a holomorphic fibre bundle map $h:X\to Y$ 
is an Oka map if and only if its fibre is an Oka manifold 
\cite[Corollary 7.4.8 (i)]{Forstneric2017E}, and if the fibre of $h$ is connected 
then $h_*:\pi_1(X,x)\to \pi_1(Y,h(x))$ is surjective.
\end{proof}


%
%
We now introduce the notion of an Oka-1 map by analogy with Oka maps.

\begin{definition}\label{def:Oka1map}
A holomorphic map $h:X\to Y$ of complex manifolds is an {\em Oka-1 map} if
\begin{enumerate}[\rm (i)] 
\item $h$ is a Serre fibration, and 
\item given an open Riemann surface $R$, a holomorphic map $g:R\to Y$,
and a continuous lifting $f_0:R\to X$ of $g$ which is holomorphic on a neighbourhood of
a compact Runge subset $K\subset R$, we can deform $f_0$ through liftings
of $g$ to a holomorphic lifting $f:R\to X$ which approximates $f_0$ as closely
as desired on $K$ and agrees with $f_0$ to any given finite order in given finitely
many points of $K$ (see \eqref{eq:lifting}). 
\end{enumerate}
\end{definition}

Note that the constant map $X\to point$ is an Oka-1 map if and only if $X$ is an 
Oka-1 manifold. Obviously, every Oka map is also an Oka-1 map, but the converse fails
at least for maps with noncompact fibres. We have the following analogue
of \cite[Proposition 3.14]{Forstneric2023Indag}.

%
%
\begin{proposition}\label{prop:Oka1map}
An Oka-1 map $h:X\to Y$ to a connected complex manifold $Y$  
is a surjective submersion and its fibres are Oka-1 manifolds. 
\end{proposition}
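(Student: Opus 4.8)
The plan is to follow the strategy of \cite[Proposition 3.14]{Forstneric2023Indag}, replacing the parametric Oka property over Stein bases by the one--dimensional lifting property of Definition \ref{def:Oka1map}(ii) wherever maps from open Riemann surfaces suffice, and by topological/holomorphic arguments for the structural statements. Throughout I would write $n=\dim X$, $m=\dim Y$, and $d=n-m$. First I would settle the surjectivity and openness of $h$, which use only that $h$ is a Serre fibration over the connected (hence path--connected) manifold $Y$. Over a contractible coordinate ball $V\ni y_0=h(x_0)$, choosing a continuous family of paths from $y_0$ to the points of $V$ and applying the homotopy lifting property to the constant initial lift $(y,0)\mapsto x_0$ on $V\times\{0\}$, one produces a continuous local section $\sigma:V\to X$ of $h$ with $\sigma(y_0)=x_0$. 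The existence of such sections through every point gives at once that $h$ is surjective and open.

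Next I would prove that $h$ is a submersion. Since $h$ is surjective, its image has positive measure in $Y$, so the generic rank of $dh$ equals $m$; hence $h$ is a submersion off a proper closed analytic subvariety $S\subset X$, with $\mathrm{rank}\, dh_x<m$ exactly for $x\in S$. To see that $S=\varnothing$, note first that openness of $h$ together with the open mapping theorem for holomorphic maps forces every fibre to have pure dimension $d$, so no fibre--dimension jump can occur along $S$. It then remains to exclude a ramification point, i.e.\ a point $x_0\in S$ at which the fibre dimension is still $d$. Such a point would make $h$ behave, transversally to the fibre, like a genuinely branched holomorphic covering, so that the fibre over $y_0$ would differ topologically (for instance in its number of local sheets) from the nearby smooth fibres of dimension $d$; equivalently, there would be no continuous section of $h$ through $x_0$, contradicting the section $\sigma$ constructed above. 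I expect this exclusion of ramification --- making the local degree and monodromy argument rigorous so as to contradict the Serre fibration property --- to be the main obstacle, since one--dimensional lifts through $x_0$ can never enlarge the image of $dh_{x_0}$ and so cannot themselves detect the rank drop.

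Finally, granting that $h$ is a submersion, each fibre $F=X_{y_0}=h^{-1}(y_0)$ is a closed complex submanifold of $X$, and I would verify that $F$ is an Oka--1 manifold through the criterion of Proposition \ref{prop:weakOka1}. Given compact sets $K\subset L=K\cup D$ in an open Riemann surface $R$ as there (chosen so that $K$ is Runge in $R$), a holomorphic map $\phi:K\to F$, and finitely many interpolation points in $K$, I would extend $\phi$ to a continuous map $f_0:R\to F\subset X$ that is still holomorphic near $K$; then $f_0$ is a continuous lifting of the constant map $g\equiv y_0:R\to Y$ which is holomorphic near the Runge set $K$. Applying Definition \ref{def:Oka1map}(ii) to this lifting problem deforms $f_0$ through liftings of $g$ --- that is, through maps into $F$ --- to a holomorphic lifting $f:R\to X$ with $h\circ f\equiv y_0$, hence $f(R)\subset F$, which approximates $\phi$ on $K$ and matches the prescribed jets at the chosen points. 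Restricting $f$ to $L$ yields the required $\tilde\phi=f|_L:L\to F$, and since jet interpolation is permitted, Proposition \ref{prop:weakOka1} shows that $F$ is a full Oka--1 manifold. This last step should be the most straightforward, as the lifting property of $h$ transfers directly to the Oka--1 property of the fibre; the delicate point of the whole proposition is the exclusion of ramification in the submersivity argument.
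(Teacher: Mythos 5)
Your fibre argument and your surjectivity/openness discussion are essentially fine, and the fibre part is in substance the paper's own argument: liftings of the constant map $g\equiv y_0$ are precisely maps into the fibre $h^{-1}(y_0)$, so Definition \ref{def:Oka1map}(ii) yields approximation on $K$ with jet interpolation by holomorphic maps into the fibre, and Proposition \ref{prop:weakOka1} upgrades this to the full Oka-1 property. (The paper obtains surjectivity slightly differently, by lifting a holomorphic disc whose image contains two given points of $Y$; your Serre-fibration argument also works, with the small caveat that local sections only show the image is open, and surjectivity itself needs path lifting over the connected base.)

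The genuine gap is the submersion step, and it is exactly the step you yourself flag as ``the main obstacle''. You reduce to excluding a rank-drop point $x_0$ with equidimensional fibres and propose to contradict the Serre fibration property by a local degree/monodromy argument; this argument is never carried out, and it would amount to proving by topology alone that every holomorphic Serre fibration is a submersion --- a much stronger and far from obvious claim, and not what the paper does. Worse, your reason for abandoning the lifting property here inverts the correct logic. You write that one-dimensional lifts through $x_0$ ``can never enlarge the image of $dh_{x_0}$ and so cannot themselves detect the rank drop''. The first half is true: if $f$ is a holomorphic lift of a disc $g:\Delta\to Y$ with $f(0)=x_0$, then $g'(0)=dh_{x_0}(f'(0))\in \mathrm{im}\, dh_{x_0}$. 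But this constraint is precisely why such lifts \emph{do} detect the rank drop. The paper's proof runs: given any holomorphic disc $g$ with $g(0)=h(x_0)$, the Serre fibration property supplies a continuous lifting of $g$ through $x_0$, and Definition \ref{def:Oka1map} (the deformation through liftings keeps the centre in the fibre, and the interpolation clause pins its value) then produces a \emph{holomorphic} lift $f$ with $f(0)=x_0$; since $g'(0)$ may be any vector in $T_{h(x_0)}Y$, the identity $g'(0)=dh_{x_0}(f'(0))$ forces $dh_{x_0}$ to be surjective. Equivalently, a rank drop at $x_0$ would make some disc non-liftable through $x_0$, contradicting the Oka-1 map property. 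This short argument (which also gives surjectivity, since any two points of $Y$ lie on a holomorphic disc) is the paper's entire proof of the first assertion; your proposal replaces it with an unproven topological claim, so as it stands it does not prove the proposition.
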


\begin{proof} 
It follows from the definition of an Oka-1 map that for every holomorphic disc 
$g:\Delta\to Y$ and point $x\in h^{-1}(g(0)) \in X$
there is a holomorphic lifting $f:\Delta\to X$ with $h\circ f=g$ and $f(0)=x$.
Hence, every tangent vector $v\in T_{g(0)}Y$ lies in the image of
the differential $dh_x:T_x X\to T_{g(0)}Y$, so $h$ is a submersion. 
Since any pair of points in $Y$ lie in the image 
of a holomorphic disc $\Delta\to Y$, $h$ is surjective. 
Hence, every fibre $h^{-1}(y)$ $(y\in Y)$ is 
a closed complex submanifold of $X$. Applying the definition of an Oka-1 map
to liftings of constant maps $R \to y\in Y$ shows that $h^{-1}(y)$ is an 
Oka-1 manifold for every $y\in Y$.
\end{proof}

An inspection of the proof of Theorem \ref{th:updown} shows that it holds 
if $h:X\to Y$ is an Oka-1 map, so we obtain the following corollary.

\begin{corollary}\label{cor:updown}
If $h:X\to Y$ is an Oka-1 map of connected complex manifolds then 
the conclusion of Theorem \ref{th:updown} holds.
\end{corollary}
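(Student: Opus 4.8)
The plan is to follow the proof of Theorem \ref{th:updown} line by line and to verify that it invokes the Oka map $h$ only through features that an Oka-1 map also enjoys. Two such features are used. The first is that $h$ is a surjective submersion and a Serre fibration; for an Oka-1 map this is exactly Definition \ref{def:Oka1map}(i) together with Proposition \ref{prop:Oka1map}. The second is the lifting of holomorphic maps from one-dimensional domains, with approximation and interpolation, which I would extract from Definition \ref{def:Oka1map}(ii).

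Concretely, reducing as in Theorem \ref{th:updown} to the disc attachment $K\subset L=K\cup D$ of Proposition \ref{prop:weakOka1} and a finite interpolation set $A\subset\mathring K$, the one lifting statement I need is: given holomorphic $g:L\to Y$ and a holomorphic lifting $f_1:K\to X$ of $g$, approximate $f_1$ uniformly on $K$ by a holomorphic lifting $f:L\to X$ of $g$ agreeing with $f_1$ to a prescribed order on $A$. To prove this I would choose a thin connected neighbourhood $R'$ of $L$ on which $g$ is holomorphic; since $D$ is attached along a boundary arc and $R'$ is thin, no component of $R'\setminus K$ is relatively compact in $R'$, so $K$ is a compact Runge set in the open Riemann surface $R'$. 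By the identity principle $f_1$ is a holomorphic lifting of $g$ on a neighbourhood of $K$, and since $K$ is a deformation retract of $R'$ the homotopy lifting property of the Serre fibration $h$ lets me extend $f_1$ to a continuous lifting $\hat f:R'\to X$ of $g$ that coincides with $f_1$ near $K$. Definition \ref{def:Oka1map}(ii) then deforms $\hat f$ to a holomorphic lifting $f:R'\to X$ of $g$, with approximation on $K$ and interpolation on $A$, and restricting to $L$ finishes the claim.

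Granting this, part (a) transcribes directly: using that $Y$ is Oka-1 I approximate $h\circ f_0$ by a holomorphic $g:L\to Y$, pass from $f_0$ to a holomorphic lifting $f_1$ of $g$ over $K$ by projecting onto the nearby fibres of the submersion $h$ (the holomorphic fibre retractions of \cite[Lemma 3.16]{Forstneric2023Indag}, whose construction over the one-dimensional, hence Stein, set $K$ uses only that $h$ is a holomorphic submersion), and then apply the lifting statement; Proposition \ref{prop:weakOka1} gives that $X$ is Oka-1. For part (b) the only extra ingredient is to promote a continuous lifting to a holomorphic one: a continuous lifting $f_0:K\to X$ of $g$ exists because $h_*$ is surjective and each component of $K$ is homotopy equivalent to a bouquet of circles; choosing a closed disc in each component of $\mathring K$ and using that the fibres of $h$ are connected Oka-1 manifolds, I would first homotope $f_0$ through liftings of $g$ to one holomorphic near these discs, and then apply Definition \ref{def:Oka1map}(ii) with the discs as the Runge set to obtain a holomorphic lifting $f_1:K\to X$. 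The rest of part (b) --- extend $f_1$ across $L$ using that $X$ is Oka-1 and push down by $h$ --- is unchanged, and part (c) follows from (a) and (b) exactly as in Theorem \ref{th:updown}, a fibre bundle Oka-1 map with connected fibres inducing a surjective homomorphism of fundamental groups.

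The main obstacle I anticipate is bridging the global formulation of Definition \ref{def:Oka1map}(ii), which is stated over an open Riemann surface with a Runge compact set, and the local disc-attachment lifting that the proof of Theorem \ref{th:updown} actually needs: one has to manufacture an open Riemann surface neighbourhood in which the relevant compact set is Runge and to promote the given holomorphic boundary data to a genuine global continuous lifting by means of the Serre fibration property. The continuous-to-holomorphic step in part (b) is the second delicate point. Everything else is a routine rewriting of the proof of Theorem \ref{th:updown}.
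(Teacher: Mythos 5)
Your proposal is correct and follows essentially the same route as the paper, whose entire proof of this corollary is the one-line remark that an inspection of the proof of Theorem \ref{th:updown} shows it remains valid for Oka-1 maps; your write-up simply carries out that inspection, supplying the bridging details (a thin Riemann surface neighbourhood of $L$ in which $K$ is Runge, extension of the holomorphic lifting over $K$ to a continuous global lifting via the Serre fibration property) needed to apply Definition \ref{def:Oka1map}(ii) in place of the Oka map's lifting property. The only blemish is your appeal in part (b) to the fibres of $h$ being \emph{connected} Oka-1 manifolds, which is neither granted by the hypotheses nor needed: holomorphizing the continuous lifting near the chosen discs only requires local submersion charts for the submersion $h$ (or, alternatively, one may apply Definition \ref{def:Oka1map}(ii) with empty Runge set), so this does not affect correctness.
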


However, we do not know the answer to the following question.

\begin{problem}
Let $h:X\to Y$ be a holomorphic fibre bundle whose fibre is an Oka-1 manifold.
Is $h$ an Oka-1 map?
\end{problem}

The proof of the analogous affirmative result for Oka manifolds and Oka maps 
(see \cite[Theorem 5.6.5]{Forstneric2017E} and 
\cite[Theorem 3.15]{Forstneric2023Indag}) does not apply.

One might wonder why is the conclusion of Theorem \ref{th:updown} (b) weaker for 
Oka-1 manifolds than for Oka manifolds, where the condition on the homotopy groups 
is unnecessary. The reason is that Oka manifolds are characterized by the 
convex approximation property (CAP), which refers to holomorphic maps from bounded 
convex sets in complex Euclidean spaces to the given manifold; 
see \cite[Definition 5.4.3 and Theorem 5.4.4]{Forstneric2017E}. 

We now introduce an approximation condition on a  
complex manifold which implies that it is Oka-1;
see Definition \ref{def:LSAP}. This is a version of the 
{\em convex approximation property}, CAP (see \cite[Definition 5.4.3]{Forstneric2017E}),
applied to dominating holomorphic sprays on discs with images in a 
given open subset of $X$. 
It encapsulates the condition which is needed to glue a pair of sprays 
in the proof of Lemma \ref{lem:noncritical}. The condition is invariant under 
Oka maps and under dominating holomorphic maps, but  
we do not know whether it characterizes Oka-1 manifolds.

Let us call a pair of compact topological discs $D\subset D'\subset \C$ 
a {\em special pair} if both discs have piecewise smooth boundaries 
and $D'\setminus \mathring D$ is a disc attached to $D$ along a boundary arc.
A holomorphic map $F:D \times \B^N\to X$ to a complex manifold $X$ 
is called a {\em holomorphic spray} of maps $D\to X$ with the core $f=F(\cdotp,0)$. 
Such a spray is said to be {\em dominating} if the partial derivative 
$\frac{\di}{\di t}\big|_{t=0}F(z,t):\C^N\to T_{f(z)}X$ is surjective for all $z\in D$. 
(Dominating sprays were used in the proof of Lemma \ref{lem:noncritical}, see \eqref{eq:F}.)  
 
%
%
\begin{definition}\label{def:LSAP}
A complex manifold $X$ has the {\em local spray approximation property}, 
LSAP, at point $x\in X$ if there is an open neighbourhood $V\subset X$ of $x$ satisfying
the following condition. Given a special pair of compact discs $D\subset D'\subset \C$ 
and a dominating holomorphic spray $F:D\times \B^N\to V$, there is a number 
$r=r(F) \in (0,1)$ such that $F$ can be approximated as closely as desired uniformly
on $D\times r\overline\B^N$ by holomorphic maps $G:D' \times r\overline\B^N\to X$.

A manifold $X$ has LSAP if the above condition holds at every point $x\in X$.
\end{definition}

\begin{remark}\label{rem:LSAP}
(A) The sprays in Definition \ref{def:LSAP} can either be defined over 
open neighbourhoods of the compact discs $D$ and $D'$, or else they could be 
continuous over the closed disc and holomorphic over its interior. Our applications 
of LSAP work in both cases since the gluing of sprays on Cartan pairs
works in both cases (see \cite[Section 5.9]{Forstneric2017E}). 

(B) Since every holomorphic spray $F:D \times \B^N\to X$ can be extended to a 
dominating spray by adding additional parameters, thereby  
increasing the dimension $N$ and shrinking the ball if necessary
(see \cite[Lemma 5.10.4]{Forstneric2017E} for a more general result),
LSAP is equivalent to the condition that every holomorphic spray of discs 
(not necessarily dominating) can be approximated by a spray on a bigger disc 
$D'$ as in Definition \ref{def:LSAP}. 
\end{remark}

An inspection of the proof of Lemma \ref{lem:noncritical} shows that 
if $X$ is dominable by a tube of lines (or by $\C^n$) at $x\in X$, 
then $X$ satisfies LSAP at $x$. 
Furthermore, if $X$ has LSAP at every point $x\in \Omega\setminus E\subset X$,
using the notation of Lemma \ref{lem:noncritical}, then the conclusion of the 
said lemma holds. This observation and the proofs of Theorems \ref{th:main2} and 
\ref{th:updown} imply the following results.

%
%
\begin{proposition}\label{prop:LSAP}
\begin{enumerate}[\rm (a)]
\item A spanning tube of complex lines in $\C^n$ has LSAP.
\item If a complex manifold $X$ satisfies LSAP at every point $x\in X\setminus E$ 
in the complement of a closed subset $E\subset X$ with $\Hcal^{2\dim X-1}(E)=0$, 
then $X$ is an Oka-1 manifold. In particular, a complex manifold with LSAP is an 
Oka-1 manifold.
\item A complex manifold which is densely dominable by manifolds having LSAP
is Oka-1. In particular, if $f:X\to Y$ is a surjective holomorphic submersion 
and $X$ is an LSAP manifold, then $Y$ is an LSAP manifold.
\item A holomorphic fibre bundle $X\to Y$ with an LSAP fibre is an Oka-1 map.
\end{enumerate}
\end{proposition}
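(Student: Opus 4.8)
The plan is to prove the four parts in order, reducing each to material already in place; parts (a)--(c) are short, and I expect the real work to be in part (d).

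For part (a), I would show that a spanning tube of lines $T\subset\C^n$ has LSAP at every point $x\in T$ by taking the neighbourhood in Definition \ref{def:LSAP} to be $V=T$ itself and invoking the spray form of Lemma \ref{lem:bump} supplied by Remark \ref{rem:bump}. Given a special pair of discs $D\subset D'$ and a dominating spray $F:D\times\B^N\to T$, I would view $D'\setminus\mathring D$ as a disc attached to $D$ along the arc $\alpha=D\cap\overline{D'\setminus\mathring D}$ and restrict to a parameter ball $r\overline\B^N$ with $r\in(0,1)$. Since $F$ takes values in $T$, we have $F(\alpha\times r\overline\B^N)\subset T$, so Lemma \ref{lem:bump} in its version for maps $D\times r\overline\B^N\to\C^n$ (using Oka--Weil in place of Runge) approximates $F$ uniformly on $D\times r\overline\B^N$ by a holomorphic map $G:D'\times r\overline\B^N\to\C^n$ sending the attached bump into $T$. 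For a close enough approximation, $G$ also maps $D\times r\overline\B^N$ into the open set $T$, so $G$ takes values in $T=X$, which is exactly the LSAP condition at $x$.

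For part (b), the point is that LSAP is precisely the hypothesis needed for the gluing step in the proof of Lemma \ref{lem:noncritical}. In that proof, dominability by a tube of lines is used only to build the submersion charts $(\omega_i,\sigma_i,U_i)$, lift the dominating spray $F$ to the tube $T_i$, extend it over the bump $D_i\setminus\mathring C_i$ by Lemma \ref{lem:bump}, and push it back down; the net effect is simply the extension of a dominating spray of discs, with values in a neighbourhood of a point of $f(\alpha_i)$, across the bump. If $X$ has LSAP at every point of $X\setminus E$, this extension is available verbatim from Definition \ref{def:LSAP} applied at a point of $f(\alpha_i)$, and the ensuing spray-gluing is unchanged. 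Hence the conclusion of Lemma \ref{lem:noncritical} holds with $\Omega=X$ (general position, Remark \ref{rem:transversality}, keeps the boundary images off $E$), and feeding this into the exhaustion argument of Theorem \ref{th:main2} with $\Omega=X$ --- where the latter reduces exactly to the Oka-1 property --- shows that $X$ is Oka-1. Taking $E=\varnothing$ gives the final sentence of (b).

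For part (c), I would first isolate the pointwise claim that \emph{if $X$ is dominable by an LSAP manifold $Z$ at a point $x$, then $X$ has LSAP at $x$}. Choosing $F:Z\to X$ holomorphic with $F(z_0)=x$ and $dF_{z_0}$ surjective, the submersion-chart form of the implicit function theorem recalled in the proof of Lemma \ref{lem:noncritical} yields neighbourhoods $z_0\in Z_0\subset Z$ and $x\in X_0\subset X$ together with a holomorphic section $s:X_0\to Z_0$ of $F$, so $F\circ s=\Id_{X_0}$. Shrinking $X_0$ so that $s(X_0)$ lies in an LSAP neighbourhood $V_Z\ni z_0$, I would take $V=X_0$: a holomorphic spray $G:D\times\B^N\to X_0$ composes with $s$ to a spray $s\circ G$ into $V_Z$, which by LSAP of $Z$ (in the non-dominating form of Remark \ref{rem:LSAP}(B)) extends to $H:D'\times r\overline\B^N\to Z$ approximating $s\circ G$, and then $F\circ H$ extends and approximates $F\circ s\circ G=G$. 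Granting this claim, dense dominability by LSAP manifolds yields LSAP on $X\setminus E$ and hence the Oka-1 property by part (b); and for the ``in particular'', a surjective holomorphic submersion $f:X\to Y$ with $X$ of class LSAP presents $Y$ as dominable by the LSAP manifold $X$ at every point, so $Y$ has LSAP everywhere.

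For part (d), I would pass to the pullback bundle. A continuous lifting $f_0$ of $g:R\to Y$ is the same as a continuous section of the holomorphic fibre bundle $E:=g^{*}X\to R$, which has the LSAP fibre $Z$, and $f_0$ is holomorphic near the compact Runge set $K$. I would then run the exhaustion scheme of Theorem \ref{th:main2} for sections of $E$: over each short boundary arc the bundle trivialises, so a dominating spray of sections becomes, in the trivialisation, a dominating spray of discs into a small neighbourhood of a point of $Z$, and LSAP of $Z$ supplies exactly the extension-over-a-bump required for the section analogue of the noncritical Lemma \ref{lem:noncritical}. The critical (topology-changing) steps and the jet interpolation at the finitely many prescribed points of $K$ are handled by the Mergelyan theorem (Theorem \ref{th:Mergelyan}) read in local trivialisations, and the homotopies produced stagewise are through sections of $E$, i.e. through liftings of $g$. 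Since holomorphic fibre bundles are Serre fibrations, both conditions of Definition \ref{def:Oka1map} hold, so $h$ is an Oka-1 map. I expect the main obstacle to be making this section version of Lemma \ref{lem:noncritical} fully rigorous --- the local trivialisations, the formation and gluing of dominating sprays of sections, and the bookkeeping of the homotopies-through-liftings --- though every ingredient is already available from the proofs of Lemma \ref{lem:noncritical} and Theorem \ref{th:main2}.
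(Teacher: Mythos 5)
Your proposal is correct and follows essentially the same route as the paper: part (a) via the spray version of Lemma \ref{lem:bump} (Remark \ref{rem:bump}), part (b) by substituting LSAP for the tube-dominability step inside Lemma \ref{lem:noncritical} and then running the exhaustion of Theorem \ref{th:main2}, and part (d) via the same pullback-bundle/section scheme, at the same level of sketchiness, that the paper itself gives. Your one genuine refinement is in (c): isolating the pointwise transfer claim that dominability by an LSAP manifold at $x$ implies LSAP at $x$ (via a local holomorphic section from the submersion chart together with Remark \ref{rem:LSAP}(B)) both reduces (c) cleanly to (b) and explicitly delivers the ``in particular'' assertion that $Y$ itself has LSAP, a point the paper's one-line proof leaves implicit.
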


\begin{proof}
Part (a) follows by inspecting the proof of Lemma \ref{lem:bump}.
Parts (b) and (c) follow from the proof of Theorem \ref{th:main2},  
with condition LSAP replacing the use of Lemma \ref{lem:noncritical} as
explained above. 
Part (d) follows from the proof of Theorem \ref{th:updown}.
The details are similar to the proof that a holomorphic fibre bundle
with Oka fibre is an Oka map (see \cite[Corollary 7.4.8]{Forstneric2017E}),
except that we also use localization as in the proof of Theorem \ref{th:main2}.
Note also that a lifting $f:D\to X$ of a holomorphic map $g:D\to Y$ in a holomorphic 
fibre bundle $h:X\to Y$ corresponds to a holomorphic section of the pullback bundle
$g^*X\to D$. 
\end{proof}

%
%
Another interesting question is whether the set of Oka-1 manifolds is open or closed in 
a smooth family of complex manifolds. By \cite[Corollary 5]{ForstnericLarusson2014IMRN} 
(see also \cite[Corollary 7.3.3]{Forstneric2017E}), compact complex surfaces that are 
Oka (and hence Oka-1) can degenerate to a surface that is not strongly Liouville, 
and hence is not an Oka-1 manifold by Corollary \ref{cor:SL}. 
This shows that the property of being Oka-1
is not closed in families of compact complex manifolds. 
Concerning families of open manifolds, in \cite[Section 10]{Forstneric2023Indag} 
there is an example of a holomorphic submersion $h:X\to \C$ from a Stein 3-fold $X$  
such that $h$ is a trivial holomorphic fibre bundle 
with fibre $\C^2$ over $\C^*=\C\setminus\{0\}$, while the fibre $X_0$ over $0\in\C$
is the product $\Delta\times \C$ which is not Liouville, and hence not Oka-1.

There are immediate examples showing that the property of being Oka or Oka-1 is not
open in families of noncompact complex manifolds. For example, one can consider the 
family $h:X=\{(z,w)\in\C^2 : |zw|<1\}\to \C$ whose fibre over any $z\in \C^*$ is the disc, 
while the fibre over $z=0$ is $\C$. On the other hand, we are not aware of an example 
of an isolated Oka or Oka-1 fibre in a smooth family of compact complex manifolds. 

%
%
%
%
\section{Oka-1 manifolds among compact complex surfaces}\label{sec:surfaces}
In this section, we summarize what we know about which compact 
complex surfaces are Oka-1.
Our discussion is based on the Enriques--Kodaira classification of such
surfaces (see Barth et al.\ \cite[Table 10, p.\ 244]{BarthHulek2004}), 
combined with Corollary \ref{cor:dominable} and 
the results of Buzzard and Lu \cite{BuzzardLu2000} on holomorphic dominability. 
The analogous analysis concerning compact Oka surfaces 
can be found in the paper
\cite{ForstnericLarusson2014IMRN} by Forstneri\v c and L\'arusson 
and in \cite[Section 7.3]{Forstneric2017E}. Comparing the two lists, 
we shall see that for many classes of compact complex surfaces
with Kodaira dimension $<2$ the properties of being Oka, Oka-1, 
and dominable by $\C^2$, are pairwise equivalent. 
The main exceptions are the K3 surfaces
and the elliptic fibrations. In the K3 class, Kummer surfaces and the 
elliptic K3 surfaces are Oka-1 (see Proposition \ref{prop:Kummer} and 
Corollary \ref{cor:K3elliptic}), but it is not known whether any or 
all of them are Oka manifolds.

The most important invariant of a compact complex manifold $X$ is 
its Kodaira dimension $\kappa_X\in \{-\infty,0,1,\ldots,n=\dim X\}$.
Let $K_X=\Lambda^n T^*X$ denote the canonical line bundle of $X$, and
for each integer $m\in\N$ let $P_m(X) = h^0(K_X^{\otimes m})$ 
denote the dimension of the complex vector space of holomorphic sections 
of the $m$-th tensor power of $K_X$.
Then, $k=\kappa_X$ is the integer such that $P_m(X)$ grows like 
$m^{k}$ as $m\to +\infty$, where $k=-\infty$ means that 
$K_X^{\otimes m}$ only admits the trivial (zero) section for every $m$. 
(See \cite[p.\ 29]{BarthHulek2004}.)
By Kodaira's pioneering work \cite{Kodaira1968} and its extensions 
(see Carlson and Griffiths \cite{CarlsonGriffiths1972}
and Kobayashi and Ochiai \cite{KobayashiOchiai1975}),
a compact complex manifold $X$ which is holomorphically or even just
meromorphically dominable by $\C^{\dim X}$ satisfies $\kappa_X<\dim X$. 
Manifolds with the maximal Kodaira dimension $\kappa_X=\dim X$ 
are said to be {\em of general type}, and they cannot be Oka since they are
not dominable by $\C^{\dim X}$. Conjecturally no such manifold is Oka-1 either, 
since it is believed that any holomorphic line $\C\to X$ in a manifold 
of general type is contained in a proper complex subvariety of $X$. 
This conjecture of Lang \cite{Lang1986} from 1986 seems to be still open.

In the sequel and unless stated otherwise, 
$X$ denotes a compact complex surface with Kodaira dimension
$\kappa\in\{-\infty,0,1\}$. A complete list of such surfaces, classified according 
to the value of $\kappa$, can be found in the monograph by 
Barth et al. \cite[Table 10, p.\ 244]{BarthHulek2004}. There are 10 classes,
and every compact complex surface has a minimal model (obtained by blowing down
all $-1$ rational curves) in exactly one of these classes. This minimal model is unique up to 
isomorphisms, except for surfaces with minimal models in the classes 1 and 3.

A {\em fibration} $f:X\to C$ of a complex surface $X$ onto a complex curve $C$ 
is a proper surjective holomorphic map with connected fibres. 
A fibration is {\em relatively minimal} if there are no $-1$ curves on any fibre.
(Any such curve is rational and can be blown down.)
A fibration is said to be {\em elliptic} if the general fibre $X_p=f^{-1}(p)$ 
is an elliptic curve (a complex torus). Such a surface can have any Kodaira dimension 
$\kappa\in \{-\infty,0,1\}$. A compact complex surface $X$ is an {\em elliptic surface} 
if it admits an elliptic fibration $X\to C$ onto an elliptic curve.  
In this connection, we recall (see \cite[Theorem 15.4, p.\ 127]{BarthHulek2004}) that if 
$X$ is a compact complex surface, $f:X\to C$ is a fibration without singular fibres,
and the curve $C$ is either $\CP^1$ or elliptic, then $f$ is a holomorphic fibre bundle.

%
%
\subsection{Kodaira dimension $\kappa=-\infty$} \label{ss:Kodairaminusinfinity}
Rational surfaces (including nonminimal ones) are Oka, and hence Oka-1.  
A ruled surface is Oka-1 if and only if its base is $\CP^1$ or an elliptic curve. 
Theorem~\ref{th:classVII} covers surfaces of class VII if the 
global spherical shell conjecture holds true.

Let us go through the list and justify these claims.

Every smooth {\em rational surface} is obtained by repeatedly 
blowing up a minimal rational surface. The minimal rational surfaces are the projective 
plane $\CP^2$, which is Oka, and the Hirzebruch surfaces $\Sigma_r$ for 
$r \in\Z_+$. The latter are holomorphic $\CP^1$-bundles over $\CP^1$, 
so they are Oka by \cite[Theorem 5.6.5]{Forstneric2017E}. 
Repeated blowups preserve the Oka property for surfaces in this class 
by \cite[Proposition 6.4.6]{Forstneric2017E}, so nonminimal rational surfaces 
are also Oka.

A {\em ruled surface} is the total space of a holomorphic fibre bundle 
$X\to C$ with fibre $\CP^1$ over a compact curve $C$ 
(see \cite[p.\ 189]{BarthHulek2004}). 
By \cite[Theorem 5.6.5]{Forstneric2017E} such a surface $X$ is Oka if and only if the base 
curve $C$ is Oka, which holds if and only if $C$ is $\CP^1$ 
or a quotient of $\C$ (see \cite[Corollary 5.6.4]{Forstneric2017E}). 
By Theorem \ref{th:updown} (c), $X$ is Oka-1 if and only if $C$ is
$\CP^1$ or a quotient of $\C$. Note that minimal ruled surfaces over $\CP^1$ 
are just the Hirzebruch surfaces. 

{\em Class VII} comprises the 
nonalgebraic compact complex surfaces of Kodaira dimension $\kappa=-\infty$. 
Minimal surfaces of class VII fall into several mutually disjoint classes.  
For second Betti number $b_2=0$, we have {\em Hopf surfaces} and {\em Inoue surfaces}.  
For $b_2\geq 1$, there are {\em Enoki surfaces}, {\em Inoue-Hirzebruch surfaces}, 
and {\em intermediate surfaces}; together they form the class of {\em Kato surfaces}. 
Conjecturally, all surfaces with $\kappa=-\infty$ and  $b_2\geq 1$ 
admit a global spherical shell, i.e., a neighbourhood of the 3-sphere $S^3\subset \C^2$ 
holomorphically embedded into $X$ so that the complement is connected.
If this \textit{global spherical shell conjecture} holds true then every minimal surface 
of class VII with $b_2\geq 1$ is a Kato surface. The conjecture was proved 
in the cases $b_2\in \{1,2,3\}$ by Teleman in the respective papers
\cite{Teleman2005,Teleman2010,Teleman2018}.
Assuming that the global spherical shell conjecture holds true, 
the following result gives a complete description of Oka-1 surfaces 
in class VII. For the corresponding description of Oka surfaces in this class,
see \cite[Theorem 4]{ForstnericLarusson2014IMRN} or \cite[Theorem 7.3.2]{Forstneric2017E}. 

%
%
\begin{theorem}  \label{th:classVII}
Minimal Hopf surfaces and minimal Enoki surfaces are Oka, and hence Oka-1. 
Inoue surfaces, Inoue--Hirzebruch surfaces, and intermediate surfaces, 
minimal or blown up, are not strongly Liouville, and hence not Oka or Oka-1. 
\end{theorem}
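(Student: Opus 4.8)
The plan is to reduce the statement to two facts and then to invoke the bridge between the Oka, Oka-1, and strong Liouville properties. Recall that every Oka manifold is Oka-1, and that by Corollary \ref{cor:SL} every Oka-1 manifold is strongly Liouville; the contrapositive of the latter says that a manifold which is not strongly Liouville is neither Oka-1 nor Oka. Thus it suffices to prove (i) that minimal Hopf and minimal Enoki surfaces are Oka, and (ii) that Inoue surfaces, Inoue--Hirzebruch surfaces, and intermediate surfaces, minimal or blown up, fail to be strongly Liouville.

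For (i), a minimal Hopf surface is an unramified quotient of $\C^2\setminus\{0\}$ by a cyclic group of contractions, so it is Oka by \cite[Corollary 5.6.11]{Forstneric2017E}, exactly as recorded in the introduction. For minimal Enoki surfaces I would rely on the fact, established in \cite[Theorem 4]{ForstnericLarusson2014IMRN} (see also \cite[Theorem 7.3.2]{Forstneric2017E}), that they are Oka; the underlying geometric reason is that such a surface is densely dominable by $\C^2$, which follows from the work of Buzzard and Lu \cite{BuzzardLu2000} and already yields the Oka-1 property through Corollary \ref{cor:dominable}(a), while the full Oka property is supplied by the cited results. In either case these surfaces are Oka, hence Oka-1.

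For (ii), I would argue on the universal cover. An Inoue surface has universal covering space biholomorphic to $\H\times\C$, where $\H$ is the upper half plane. Since $\H$ is biholomorphic to the disc, it carries a nonconstant negative plurisubharmonic function (for instance the pullback of $z\mapsto |z|^2-1$), and composing it with the projection $\H\times\C\to\H$ produces a nonconstant negative plurisubharmonic function on the universal cover. Therefore the universal cover is not Liouville, so the Inoue surface is not strongly Liouville. For Inoue--Hirzebruch and intermediate surfaces the same conclusion — nonexistence of a Liouville universal cover — is established in \cite[Theorem 4]{ForstnericLarusson2014IMRN} and \cite[Section 7.3]{Forstneric2017E}, from which the claim follows; these universal covers are less transparent than the product $\H\times\C$, and this is where I expect the main obstacle to lie, so I would lean on the structural analysis of \cite{ForstnericLarusson2014IMRN} rather than reproving it.

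Finally, I would check that failure of the strong Liouville property is preserved under blow-ups, which handles the nonminimal cases uniformly. Blowing up a point does not change the fundamental group of a compact complex surface, so the universal cover of a blow-up $X'\to X$ is obtained from the universal cover $\widetilde X$ of $X$ by blowing up the discrete orbit of the centre; denote the induced blow-down by $\pi\colon\widetilde{X'}\to\widetilde X$. If $u$ is a nonconstant negative plurisubharmonic function on $\widetilde X$, then $u\circ\pi$ is a nonconstant negative plurisubharmonic function on $\widetilde{X'}$, so $\widetilde{X'}$ is not Liouville and $X'$ is not strongly Liouville. Combined with Corollary \ref{cor:SL}, this shows that no surface in the second group is Oka-1 or Oka, completing the proof.
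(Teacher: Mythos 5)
Your proposal is correct and takes essentially the same approach as the paper: the paper offers no standalone proof of Theorem \ref{th:classVII}, justifying it exactly as you do by citing \cite[Theorem 4]{ForstnericLarusson2014IMRN} (equivalently \cite[Theorem 7.3.2]{Forstneric2017E}) for the Oka property of minimal Hopf and Enoki surfaces and for the failure of strong Liouvilleness of Inoue, Inoue--Hirzebruch, and intermediate surfaces, and then combining this with Corollary \ref{cor:SL} together with the trivial implication that Oka implies Oka-1. Your supplementary elementary arguments (Hopf surfaces as unramified quotients of $\C^2\setminus\{0\}$, the universal cover $\H\times\C$ of Inoue surfaces, and pulling back negative plurisubharmonic functions under the blow-down map between universal covers to handle the blown-up cases) are correct fill-ins of what the cited theorem contains; only your parenthetical attribution of the dense dominability of Enoki surfaces to \cite{BuzzardLu2000} is doubtful as a matter of attribution, but nothing in your argument rests on it.
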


Recall that every Oka-1 manifold is strongly Liouville by Corollary \ref{cor:SL}.

In summary, we have the following corollary concerning surfaces with 
$\kappa_X=-\infty$.

\begin{corollary}\label{cor:minusinfty}
Modulo the global spherical shell conjecture, the following conditions 
are equivalent for every minimal compact complex surface $X$ with 
$\kappa_X=-\infty$:
\[
	\text{Oka} \ \Longleftrightarrow\  \text{dominable by $\C^2$}  
	\ \Longleftrightarrow\ \text{Oka-1} 
	\ \Longleftrightarrow\ \text{not strongly Liouville}.
\]
\end{corollary}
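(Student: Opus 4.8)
The plan is to close the cycle of implications
\[
\text{Oka}\ \Longrightarrow\ \text{dominable by }\C^2\ \Longrightarrow\ \text{Oka-1}\ \Longrightarrow\ \text{strongly Liouville}\ \Longrightarrow\ \text{Oka},
\]
which gives the asserted pairwise equivalence; by Corollary~\ref{cor:SL} the fourth property entering the equivalence is being strongly Liouville, and the whole point of the argument will be to identify, among minimal $\kappa_X=-\infty$ surfaces, the \emph{non}–strongly-Liouville ones with the non-Oka ones. The first three arrows are general facts already available. A connected Oka surface is strongly dominable by a holomorphic map $\C^2\to X$, so Oka $\Rightarrow$ dominable. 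Every minimal $\kappa_X=-\infty$ surface whose dominability by $\C^2$ is established in \cite{BuzzardLu2000} is, upon inspection of the proofs, densely dominable by $\C^2$, so dominable $\Rightarrow$ Oka-1 by Corollary~\ref{cor:dominable}(a). Finally, Oka-1 $\Rightarrow$ strongly Liouville is precisely Corollary~\ref{cor:SL}. Thus the sole content is the closing arrow, strongly Liouville $\Rightarrow$ Oka.

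I would prove this in contrapositive form: every minimal $\kappa_X=-\infty$ surface that is not Oka fails to be strongly Liouville. Running through the Enriques--Kodaira list (\cite[Table~10]{BarthHulek2004}) together with \S\ref{ss:Kodairaminusinfinity}, the Oka classes are exactly the rational surfaces, the ruled surfaces over $\CP^1$ or an elliptic curve, the minimal Hopf surfaces, and the minimal Enoki surfaces, so only two families of non-Oka minimal surfaces remain to treat. First, a ruled surface $h:X\to C$ with $g(C)\ge 2$: since the fibre $\CP^1$ is simply connected we have $\pi_1(X)\cong\pi_1(C)$, and the universal covering space $\wt X$ is the pullback of the bundle $h$ to $\wt C=\D$, hence a holomorphic $\CP^1$-bundle $\wt X\to\D$; pulling back a nonconstant negative plurisubharmonic function from $\D$ along this projection produces such a function on $\wt X$, so $\wt X$ is not Liouville and $X$ is not strongly Liouville. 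Second, the Inoue surfaces (when $b_2=0$) together with the Inoue--Hirzebruch and intermediate surfaces (when $b_2\ge 1$): these are exactly the surfaces shown to be non--strongly Liouville in Theorem~\ref{th:classVII}.

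The main obstacle, and the reason for the hypothesis in the statement, is completeness of the classification in class VII with $b_2\ge 1$. Here I would invoke the global spherical shell conjecture to conclude that every such minimal surface admits a global spherical shell and is therefore a Kato surface, hence an Enoki, Inoue--Hirzebruch, or intermediate surface; without this input one cannot a priori exclude a hypothetical minimal class VII surface of some other type, and the contrapositive would be incomplete. Granting the conjecture, the two families above exhaust all non-Oka minimal surfaces with $\kappa_X=-\infty$, each of them is not strongly Liouville, and the contrapositive—hence the closing arrow strongly Liouville $\Rightarrow$ Oka—is established. This completes the cycle and the equivalence. I would finish with the remark that, by Theorem~\ref{th:classVII}, the Inoue, Inoue--Hirzebruch, and intermediate surfaces remain non--strongly Liouville after blowing up, so these classes are excluded from being Oka-1 whether or not the surface is minimal.
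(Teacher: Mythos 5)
Your overall architecture --- the cycle Oka $\Rightarrow$ dominable $\Rightarrow$ Oka-1 $\Rightarrow$ strongly Liouville $\Rightarrow$ Oka, with the closing arrow proved in contrapositive by listing the non-Oka minimal surfaces with $\kappa_X=-\infty$ and showing each fails to be strongly Liouville --- matches in substance the paper's case-by-case discussion in Section \ref{ss:Kodairaminusinfinity}, and your reading of the fourth property as \emph{strongly Liouville} is the correct one (the printed ``not'' is incompatible with Corollary \ref{cor:SL}, e.g.\ for Hopf surfaces, whose universal cover $\C^2\setminus\{0\}$ is Liouville). Your treatment of ruled surfaces over a base of genus $\ge 2$ via the universal cover (the pulled-back $\CP^1$-bundle over $\D$, carrying a negative plurisubharmonic function pulled back from $\D$) is a correct variant of what the paper does; the paper instead disposes of ruled surfaces by Theorem \ref{th:updown}(c), transferring the Oka-1 property along the $\CP^1$-bundle to the base curve. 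Either route is fine for the Oka-1 and strong Liouville legs.

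There is, however, a genuine gap in your arrow ``dominable $\Rightarrow$ Oka-1''. Your justification --- that every surface whose dominability is \emph{established} in \cite{BuzzardLu2000} is in fact densely dominable, hence Oka-1 by Corollary \ref{cor:dominable}(a) --- covers only the surfaces on Buzzard--Lu's list. For the cycle to close, the implication must hold for \emph{every} minimal surface with $\kappa_X=-\infty$, so you must also rule out dominability of the remaining classes: ruled surfaces over a base of genus $\ge 2$ (easy but unstated: a dominating map $\C^2\to X$ followed by the projection to the hyperbolic base is constant, so the image lies in a single fibre), Inoue surfaces (easy: the universal cover is $\H\times\C$, and the $\H$-component of any lifted map from $\C^2$ is constant), and --- the nontrivial case --- the Inoue--Hirzebruch and intermediate Kato surfaces. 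Your contrapositive shows these are not strongly Liouville, which excludes Oka and Oka-1 but says nothing about dominability: a negative plurisubharmonic function $u$ on the universal cover $\wt X$ pulls back along a lifted dominating map $\C^2\to\wt X$ to a constant, which forces $u$ to be constant only on the open image, not on all of $\wt X$; so ``not strongly Liouville $\Rightarrow$ not dominable'' is not automatic. Nothing in your argument excludes a hypothetical dominable Inoue--Hirzebruch surface, and then the four-way equivalence would fail at this arrow. The missing input is the result of Forstneri\v c and L\'arusson \cite[Theorem 4]{ForstnericLarusson2014IMRN}, on which the paper's Theorem \ref{th:classVII} rests, where (modulo the global spherical shell conjecture) the flexibility properties, \emph{including dominability by $\C^2$}, are shown to be equivalent for minimal class VII surfaces; there the failure of strong Liouville is witnessed by data on the universal cover whose pullback to $\C^2$ must degenerate, which is what kills dominability. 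With that citation (or a direct non-dominability argument for the Kato classes) your cycle closes; without it, the dominability leg of the equivalence is unproved.
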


%
%
\subsection{Kodaira dimension} $\kappa=0$. 
Bielliptic surfaces, Kodaira surfaces, 
and tori are Oka, and hence Oka-1. Elliptic and Kummer K3 surfaces are
Oka-1 manifolds, but we do not know whether any or all of them are Oka manifolds.
Again, we proceed case by case.
 
{\em Tori} (unramified quotients of $\C^2$) are complex homogeneous 
manifiolds,  hence Oka (see \cite[Proposition 5.6.1]{Forstneric2017E}) 
and therefore Oka-1. In fact, more can be said about compact tori. 

\begin{proposition}\label{prop:torus}
A complex surface $Y$ bimeromorphic to a compact torus is densely
dominable by $\C^2$, and hence an Oka-1 manifold.
\end{proposition}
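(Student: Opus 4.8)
The plan is to establish dense dominability of $Y$ by $\C^2$ and then invoke Corollary \ref{cor:dominable}(a), which immediately delivers the Oka-1 conclusion. \emph{First} I would record the structure of $Y$. Since the Kodaira dimension is a bimeromorphic invariant, $\kappa_Y=0$, and a torus is a minimal surface which is not ruled; hence by uniqueness of minimal models for non-ruled surfaces (Enriques--Kodaira classification, see \cite{BarthHulek2004}) the minimal model of $Y$ is biholomorphic to the torus $T=\C^2/\Gamma$. Consequently there is a holomorphic bimeromorphic morphism $\tau:Y\to T$, a finite composition of blow-downs, whose exceptional set $E=\bigcup_i E_i$ is a finite union of compact complex curves and which restricts to a biholomorphism $Y\setminus E \xrightarrow{\ \sim\ } T\setminus\{p_1,\dots,p_k\}$. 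Because $E$ is a finite union of curves it has real dimension $2$, so $\Hcal^{3}(E)=\Hcal^{2\dim Y-1}(E)=0$; thus it suffices to dominate $Y$ by $\C^2$ at every point of $Y\setminus E$ (cf.\ Remark \ref{rem:dominable}).

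\emph{Next} I would combine the universal covering $\pi:\C^2\to T$, which is everywhere a local biholomorphism, with $\tau^{-1}$ to form $f=\tau^{-1}\circ\pi$. This is a surjective local biholomorphism from $\C^2\setminus I$ onto $Y\setminus E$, where $I=\pi^{-1}(\{p_1,\dots,p_k\})$ is a closed discrete subset of $\C^2$. The only difficulty is that $f$ is not defined on all of $\C^2$, whereas dominability by $\C^2$ requires a holomorphic map from the entire plane. \emph{The hard part} — and the crux of the argument — is to repair this, which I would do by a Fatou--Bieberbach trick. Fix $y_0\in Y\setminus E$ and a point $z_0\in\C^2\setminus I$ with $f(z_0)=y_0$, and choose a ball $B\ni z_0$ with $\overline B\cap I=\varnothing$, so that $f$ is holomorphic on $B$. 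There exists a Fatou--Bieberbach domain $\Omega$ with $z_0\in\Omega\subset B$, that is, a biholomorphism $\psi:\C^2\xrightarrow{\ \sim\ }\Omega$ onto a proper subdomain contained in $B$ (obtained from any such domain by an affine automorphism of $\C^2$; see \cite{Forstneric2017E}). Setting $F=f\circ\psi:\C^2\to Y$ then gives a holomorphic map defined on all of $\C^2$ whose differential at $\zeta_0:=\psi^{-1}(z_0)$ equals $df_{z_0}\circ d\psi_{\zeta_0}$, a composition of isomorphisms; hence $F$ is a submersion at $\zeta_0$ with $F(\zeta_0)=y_0$, and $Y$ is dominable by $\C^2$ at $y_0$.

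\emph{Finally}, since $y_0\in Y\setminus E$ was arbitrary and $\Hcal^{3}(E)=0$, this shows that $Y$ is densely dominable by $\C^2$, and Corollary \ref{cor:dominable}(a) gives that $Y$ is an Oka-1 manifold. I expect the only genuine obstacle to be the passage from the partially defined dominating map $f$ to an honest map out of $\C^2$: the indeterminacy set $I$ is merely discrete, and the set-theoretic lift of $\pi$ through the blow-down $\tau$ does not extend holomorphically across $I$, so no removable-singularity argument is available. It is precisely the availability of Fatou--Bieberbach domains that lets one shrink the source into a ball avoiding $I$ while retaining a point of maximal rank. The identification of $Y$ as a blow-up of $T$ is standard input from surface classification, and the $\Hcal^{3}$-bookkeeping for $E$ is routine.
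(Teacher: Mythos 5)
Your reduction to the geometry of the blow-down $\tau:Y\to T$ is fine and in fact agrees with the paper's own set-up: the minimal model argument, the identification $Y\setminus E\cong T\setminus\{p_1,\dots,p_k\}$, the $\Hcal^{3}$-bookkeeping, and the plan of dominating at each point of $Y\setminus E$ and then invoking Corollary \ref{cor:dominable}(a) are all correct. The fatal problem is the step you yourself single out as the crux: there is \emph{no} Fatou--Bieberbach domain $\Omega$ with $z_0\in\Omega\subset B$ for a bounded ball $B$. A biholomorphism $\psi:\C^2\to\Omega\subset B$ would be a bounded nonconstant holomorphic map $\C^2\to\C^2$, which is impossible by Liouville's theorem applied to its coordinate functions (equivalently: a bounded domain is Kobayashi hyperbolic, while $\C^2$ is not). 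Fatou--Bieberbach domains are always unbounded, and no affine automorphism can squeeze one into a ball, so the map $F=f\circ\psi$ is never constructed.

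The gap is essential rather than cosmetic, because discreteness of $I=\pi^{-1}(\{p_1,\dots,p_k\})$ alone cannot yield a dominating map defined on all of $\C^2$: by Rosay and Rudin \cite{RosayRudin1988} (cited in the introduction of the paper) there exist discrete sets $A\subset\C^2$ such that every holomorphic map $\C^2\to\C^2\setminus A$ is degenerate, so there is no general device for avoiding a discrete set while retaining a point of maximal rank. What saves the argument is the special structure of $I$ as a finite union of translates of the lattice $\Gamma$: Buzzard and Lu proved \cite[Proposition 4.1]{BuzzardLu2000} that such a set is \emph{tame} in $\C^2$, whence $\C^2\setminus I$ is an Oka manifold by \cite[Proposition 5.6.17]{Forstneric2017E}, in particular dominable by $\C^2$ at every point. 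This is exactly the route the paper takes: $X\setminus P$ is an unramified quotient of the Oka manifold $\C^2\setminus\wt\Gamma$, hence Oka, and composing with the biholomorphism $X\setminus P\cong Y\setminus E$ gives dense dominability of $Y$. If you replace your Fatou--Bieberbach step by the tameness theorem (or, equivalently, produce a Fatou--Bieberbach domain inside $\C^2\setminus I$ through a prescribed point, which again requires tameness), your proof becomes the paper's proof.
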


\begin{proof} 
Let $\Gamma$ be a lattice of rank $4$ in $\C^2$, that is, a free abelian
subgroup $\Gamma\cong\Z^4$ of $\C^2$. Given finitely many points 
$P=\{p_1,\ldots, p_m\}$ in the torus $X=\C^2/\Gamma$, the complement 
$X\setminus P$ is universally covered by $\C^2\setminus \wt \Gamma$, where 
$\wt \Gamma=\bigcup_{i=1}^m (a_i+\Gamma)$ and 
$a_i\in \C^2$ are points mapped to $p_i$ under the quotient projection.
Buzzard and Lu showed in \cite[Proposition 4.1]{BuzzardLu2000} 
that the discrete set $\wt \Gamma$ is tame in $\C^2$. Hence, 
$\C^2\setminus \wt \Gamma$ is an Oka manifold by 
\cite[Proposition 5.6.17]{Forstneric2017E}, and its unramified quotient 
$X\setminus P$ is an Oka manifold by \cite[Proposition 5.6.3]{Forstneric2017E}. 
Any compact complex surface $Y$ which is bimeromorphic to $X$ 
admits a dominating holomorphic map from such a complement $X\setminus P$
with a Zariski dense image in $Y$, so $Y$ is densely dominable by $\C^2$. 
The conclusion now follows from Corollary \ref{cor:dominable} (a).
\end{proof}

Most tori are not elliptic. The elliptic compact $2$-tori form a 3-dimensional
family in the 4-dimensional family of tori. A generic 2-torus does not contain
any compact complex curves. 

According to \cite[p.\ 245]{BarthHulek2004}, every {\em bielliptic surface}    
and every {\em primary Kodaira surface} is the total space of a 
holomorphic fibre bundle with torus fibre over a torus, so it is Oka by 
\cite[Theorem 5.6.5]{Forstneric2017E} . 
A {\em secondary Kodaira surface} is a proper unramified holomorphic quotient of 
a primary Kodaira surface, so it is Oka by \cite[Proposition 5.6.3]{Forstneric2017E}. 
They are elliptic fibrations over $\CP^1$ with $b_1(X)=1$ and with nontrivial canonical bundle.

A {\em K3 surface} is a simply connected surface ($b_1=0$) with trivial canonical bundle,
hence $\kappa=0$. We refer to Barth et al.\ \cite[Chapter VIII]{BarthHulek2004} and 
Huybrechts \cite{Huybrechts2016} for a detailed treatment of such surfaces;
the basic description in \cite[Section 4.2]{BuzzardLu2000} will suffice for our needs. 
The class of K3 surfaces includes Kummer surfaces, which form a dense codimension 
16 family in the moduli space of K3 surfaces, and the elliptic K3 surfaces, which form
a dense codimension one family in the moduli space. All elliptic fibrations in the K3 class 
are ramified. It is not known whether any or all K3 surfaces are Oka.
Here we will show that every Kummer surface and every elliptic K3 surface
is an Oka-1 manifold.


Let us recall the structure of Kummer surfaces; see \cite[p.\ 224]{BarthHulek2004}. 
Let $X=\C^2/\Gamma$ be a compact complex 2-torus, and let $h:\C^2\to X$ 
be the quotient (covering) map.  The involution $\C^2\to \C^2$, 
$(z_1,z_2)\mapsto (-z_1,-z_2)$ descends to an involution $\sigma$ on $X$ 
with 16 fixed points $P=\{p_1,\ldots,p_{16}\}$. 
(If $\omega_1,\ldots,\omega_4\in\C^2$ are the generators of the lattice $\Gamma$ 
then $p_1,\ldots,p_{16}$ are the images under $h$ of the 16 points 
$c_1\omega_1+\cdots+c_4\omega_4$, where $c_1,\ldots,c_4\in\{0,\frac{1}{2}\}$.)   
The quotient space $X/\{1,\sigma\}$ is a 2-dimensional complex space with 16 
singular points $q_1,\ldots,q_{16}$. Blowing up $X/\{1,\sigma\}$ at each 
of these points yields a smooth compact surface $Y$ containing pairwise disjoint 
smooth rational curves $C_1,\ldots,C_{16}$. (Each of them is a $-2$ curve.)   
This is the Kummer surface associated to the rank four lattice $\Gamma\subset\C^2$.

Let $C=\bigcup_{i=1}^{16} C_i$. Note that 
$Y\setminus C$ is an unramified two-sheeted quotient of $X\setminus P$.
We have seen in the proof of Proposition \ref{prop:torus} that $Y\setminus C$
and $X\setminus P$ are Oka manifolds. If $B$ is a finite subset 
of $Y\setminus C$ and $A$ is its saturated preimage in $X\setminus P$,
then the Zariski domains $Y_0=Y\setminus (B\cup C)$ and $X_0=X\setminus (A\cup P)$
are manifolds of the same kind, hence Oka. If $\wt Y$ is a compact surface bimeromorphic 
to $Y$, it admits a dominating holomorphic map from such 
$Y_0$ (for some $B$) with a Zariski dense image in $\wt Y$, so $\wt Y$ 
is densely dominable by $\C^2$. This proves the following statement.

%
%
\begin{proposition}\label{prop:Kummer}
A compact complex surface bimeromorphic to a Kummer surface is densely 
dominable by $\C^2$, and hence an Oka-1 manifold.
\end{proposition}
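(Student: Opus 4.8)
The plan is to follow the explicit construction of Kummer surfaces recalled just before the statement and to reduce everything to the torus case already settled in Proposition \ref{prop:torus}. First I would fix the relevant data: the Kummer surface $Y$ comes from a compact $2$-torus $X=\C^2/\Gamma$ together with the involution $\sigma$ induced by $z\mapsto -z$, whose fixed locus is the $16$-point set $P=\{p_1,\ldots,p_{16}\}$; blowing up the $16$ singular points of $X/\{1,\sigma\}$ yields $Y$ together with the pairwise disjoint rational $(-2)$-curves $C_1,\ldots,C_{16}$, and I set $C=\bigcup_{i=1}^{16}C_i$. The geometric fact I would lean on is that $Y\setminus C$ is the quotient of $X\setminus P$ by the action of $\sigma$, which is free off its fixed points; hence the natural map $X\setminus P\to Y\setminus C$ is an unramified two-sheeted covering.

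Next I would establish that $Y\setminus C$, and more generally $Y_0=Y\setminus(B\cup C)$ for an arbitrary finite set $B\subset Y\setminus C$, is an Oka manifold, reproducing the argument from the proof of Proposition \ref{prop:torus}. Letting $A$ be the $\sigma$-saturated preimage of $B$ in $X\setminus P$, the surface $X_0=X\setminus(A\cup P)$ is universally covered by $\C^2\setminus\wt\Gamma$, where $\wt\Gamma$ is a finite union of translates of the lattice $\Gamma$. By the tameness result of Buzzard and Lu \cite[Proposition 4.1]{BuzzardLu2000}, $\wt\Gamma$ is tame in $\C^2$, so its complement is Oka by \cite[Proposition 5.6.17]{Forstneric2017E}; the unramified quotients $X_0$ and then $Y_0$ are therefore Oka by \cite[Proposition 5.6.3]{Forstneric2017E}.

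Finally, given a compact complex surface $\wt Y$ bimeromorphic to $Y$, I would produce dense dominability by $\C^2$. A bimeromorphic map $Y\dashrightarrow\wt Y$ is biholomorphic over Zariski open dense subsets, so after enlarging $B$ to absorb the finitely many indeterminacy points lying in $Y\setminus C$, it restricts to a holomorphic map $\phi:Y_0\to\wt Y$ with Zariski dense image. Since $Y_0$ is a connected Oka manifold, it is strongly dominable: there is a holomorphic map $G:\C^2\to Y_0$ with $G(\C^2\setminus\br\,G)=Y_0$ (as recalled after Corollary \ref{cor:dominable}; see \cite{Forstneric2017Indam}). The composition $\phi\circ G:\C^2\to\wt Y$ is then a holomorphic map whose image contains $\phi(Y_0)$, hence is Zariski dense, and which is submersive on a Zariski open subset of $\C^2$ (where $G$ is submersive and $\phi$ is a local isomorphism). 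This exhibits $\wt Y$ as densely dominable by $\C^2$, and the Oka-1 conclusion follows at once from Corollary \ref{cor:dominable}(a).

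The main obstacle I expect is the bookkeeping in the last paragraph rather than any deep point: one must choose $B$ large enough to clear the indeterminacy of the bimeromorphic correspondence, so that $\phi$ becomes an honest holomorphic dominating map on $Y_0$, yet keep $B$ finite so that the Oka property of $Y_0$ established above is preserved. Verifying that the dominating map from $\C^2$ has \emph{Zariski} dense (and not merely topologically dense) image reduces to the fact that $\phi(Y_0)$ omits only a proper subvariety of $\wt Y$, which holds because $\wt Y\setminus\phi(Y_0)$ is contained in the proper analytic set where the bimeromorphic map fails to be an isomorphism.
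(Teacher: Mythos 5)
Your proposal is correct and takes essentially the same route as the paper: the same reduction via the Kummer construction to the torus case (Buzzard--Lu tameness of $\wt\Gamma$, the Oka property of $Y_0=Y\setminus(B\cup C)$ via unramified quotients as in Proposition \ref{prop:torus}), followed by composing a dominating map $\C^2\to Y_0$ with the bimeromorphic map to get dense dominability of $\wt Y$ and then invoking Corollary \ref{cor:dominable}(a). You merely spell out in more detail the final step (strong dominability of the Oka surface $Y_0$ and the absorption of indeterminacy points into $B$), which the paper states tersely.
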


Next, we consider elliptic fibrations. Suppose that $f:X\to C$ is an elliptic fibration 
over a complex curve $C=\overline C\setminus P$ obtained by removing at most 
finitely many points $P$ from a compact complex curve $\overline C$. 
(Such a curve $C$ is quasi-projective.) Let $m\ge 0$ denote the cardinality
of the set $P$. The Euler characteristic of $C$ equals 
\begin{equation}\label{eq:chiC}
	\chi(C) = \chi(\overline C)-m = 2-2g(\overline C) - m,
\end{equation}
where $g$ denotes the genus. We assume that the fibration $f:X\to C$ has 
at most finitely many multiple and singular fibres. Let $n_p\in \N$ for $p\in C$
denote the multiplicity of the fibre $X_p=f^{-1}(p)$, so $n_p=1$ means that
the fibre is not multiple, although it may still be singular. 
(See \cite[Chapter V]{BarthHulek2004} or \cite[Sect.\ 3.2]{BuzzardLu2000}
for a detailed description of this notion.)
The fibration $f:X\to C$ determines the divisor 
\begin{equation}\label{eq:D}
	D=\sum_{p\in C}\left(1-\frac{1}{n_p}\right)p
\end{equation} 
with rational coefficients of degree 
\begin{equation}\label{eq:degD}
	\deg D = \sum_{p\in C}\left(1-\frac{1}{n_p}\right)\in \mathbb Q_+.
\end{equation} 
The sum is over $p\in C$ with $n_p > 1$, and
$\deg D=0$ if and only of there are no multiple fibres. 

We can now state our main result concerning Oka-1 manifolds among
elliptic fibrations. 

%
%
\begin{theorem}\label{th:elliptic} 
Assume that $C$ is a quasi-projective complex curve and $f:X\to C$ 
is a relatively minimal elliptic fibration with at most a finite number of multiple fibres.
Let $D$ denote the associated $\mathbb Q$-divisor \eqref{eq:D}. Then,
the following conditions are equivalent.
\begin{enumerate}[\rm (a)]
\item The surface $X$ is densely dominable by $\C^2$, and hence an Oka-1 manifold.
\item $\chi(C,D)=\chi(C) - \deg D \ge 0$. (See \eqref{eq:chiC} and \eqref{eq:degD}.)
\item There is a holomorphic map $\C\to X$ with a Zariski dense image.
\end{enumerate}
Assuming that $X$ is not bimeromorphic to a primary or a secondary Kodaira
surface, the above conditions are also equivalent to the following:
\begin{enumerate}[\rm (d)]
\item  The fundamental group $\pi_1(X)$ is a finite extension of an abelian group.
\end{enumerate}
\end{theorem}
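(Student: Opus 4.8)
The plan is to establish the cyclic chain $(a)\Rightarrow(c)\Rightarrow(b)\Rightarrow(a)$ and then to treat $(d)$ separately under the additional hypothesis. The governing invariant throughout is the orbifold Euler characteristic $\chi(C,D)=\chi(C)-\deg D$ of the base curve $C$ equipped with the points of $D$, each $p$ carrying the weight $n_p$ coming from the corresponding multiple fibre. The two external inputs are the results of Buzzard and Lu \cite{BuzzardLu2000} on holomorphic dominability of elliptic surfaces, and Corollary \ref{cor:dominable}(a), which promotes dense dominability by $\C^2$ to the Oka-1 property; the latter is precisely what lets us fold ``densely dominable by $\C^2$, hence Oka-1'' into the single condition $(a)$.

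For $(a)\Rightarrow(c)$, condition $(a)$ gives that $X$ is an Oka-1 manifold of dimension $2>1$, so Corollary \ref{cor:densecurves} produces a holomorphic map $\C\to X$ with everywhere dense, in particular Zariski dense, image. (Alternatively, restricting a dominating map $\C^2\to X$ to a generic complex line already yields an entire curve with Zariski dense image.) For $(c)\Rightarrow(b)$, let $g:\C\to X$ have Zariski dense image and suppose, for contradiction, that $\chi(C,D)<0$, so that the base orbifold $(C,D)$ is of hyperbolic type and its orbifold universal cover is the disc $\D$. The composition $f\circ g:\C\to C$ cannot be constant, for otherwise $g(\C)$ would lie in a single one-dimensional fibre, contradicting Zariski density. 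Near a multiple fibre of multiplicity $n_p$ the map $f$ factors, after a local base change, through the $n_p$-th power map, so the ramification of $f\circ g$ over $p$ is divisible by $n_p$; hence $f\circ g$ defines a non-constant holomorphic map into the orbifold $(C,D)$. Such a map lifts to a holomorphic map $\C\to\D$, which is constant by Liouville's theorem, a contradiction. Therefore $\chi(C,D)\ge 0$.

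The substantive implication is $(b)\Rightarrow(a)$. When $\chi(C,D)\ge 0$ the base orbifold is parabolic or elliptic, so its orbifold universal cover is $\C$ or $\CP^1$ and it admits a dominating holomorphic map $\C\to(C,D)$ respecting the multiplicities. Pulling the fibration $f$ back along such a map and trivializing over the orbifold cover by means of the group structure on the smooth fibres, one constructs a holomorphic map from a two-dimensional source that dominates $X$ at a generic point; this is exactly the construction carried out for elliptic surfaces by Buzzard and Lu \cite{BuzzardLu2000}. Inspecting their argument shows that the dominating map has Zariski open image and is submersive off a proper subvariety, so $X$ is in fact densely dominable by $\C^2$, and Corollary \ref{cor:dominable}(a) then gives the Oka-1 property. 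I expect the delicate point to be precisely here: near the multiple and singular fibres the total space is only a local quotient, so one must trivialize the pulled-back family carefully on the orbifold cover and check that the resulting map stays submersive at generic points despite the degenerate fibres. This is where relative minimality and the finiteness of the set of multiple fibres are used.

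Finally, for $(d)$, the homotopy exact sequence of the elliptic fibration exhibits $\pi_1(X)$ as an extension of the orbifold fundamental group $\pi_1^{\mathrm{orb}}(C,D)$ by a finitely generated abelian group, namely a quotient of the fibre group $\Z^2$. Since $\pi_1^{\mathrm{orb}}(C,D)$ is a quotient of $\pi_1(X)$, condition $(d)$ forces $\pi_1^{\mathrm{orb}}(C,D)$ to be virtually abelian, and a curve orbifold has virtually abelian orbifold fundamental group precisely when it is non-hyperbolic, i.e.\ when $\chi(C,D)\ge 0$; this gives $(d)\Rightarrow(b)$. Conversely, when $\chi(C,D)\ge 0$ and the abelian kernel of the extension does not introduce extra non-commutativity, virtual abelianness of $\pi_1^{\mathrm{orb}}(C,D)$ passes up to $\pi_1(X)$, giving $(b)\Rightarrow(d)$. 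The genuine exception is exactly the class of primary and secondary Kodaira surfaces: there one has $D=0$ and $\chi(C,D)\ge 0$, so $(b)$ holds, yet the extension is a non-split central extension and $\pi_1(X)$ is nilpotent of class two, hence not virtually abelian, so $(d)$ fails. Excluding these surfaces removes the only obstruction to the extension being virtually abelian under $(b)$, and the equivalence $(d)\Leftrightarrow(b)$ is restored.
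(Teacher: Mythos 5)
Your chain $(a)\Rightarrow(c)\Rightarrow(b)$ is sound: $(a)\Rightarrow(c)$ follows from Corollary \ref{cor:densecurves} as you say, and your hyperbolicity argument for $(c)\Rightarrow(b)$ (multiplicities of $f\circ g$ over the points of $D$ are divisible by $n_p$, so $f\circ g$ lifts to the orbifold universal cover $\D$ and Liouville applies) is a correct re-proof of an implication the paper simply cites from Buzzard--Lu. The genuine gap is in $(b)\Rightarrow(a)$, which is the one implication the paper actually has to prove (everything else it reduces to citations of \cite{BuzzardLu2000}); there your proposal substitutes assertion for proof. You write that ``inspecting their argument shows that the dominating map has Zariski open image and is submersive off a proper subvariety'' and then flag that you expect the delicate point to lie exactly there --- in other words, the step is not carried out. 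The paper's proof of this step runs as follows: by \cite[IV 9.12]{FarkasKra1992} the orbifold $(C,D)$ has a uniformizing covering $\alpha:\wt C\to C$ with $\wt C\in\{\CP^1,\C\}$ when $\chi(C,D)\ge 0$; the pullback fibration $\alpha^* X\to \wt C$ has no multiple fibres and $\alpha^*X\to X$ is an \emph{unramified} covering, and since (dense) dominability is covering-invariant one may assume $C=\C$ (the case $\CP^1$ reduces to $\C$ by deleting a point) and that there are no multiple fibres. One then needs \cite[Lemma 3.8]{BuzzardLu2000} to produce a global holomorphic section $\sigma:\C\to X$ (multiplicity-one components of non-multiple singular fibres plus a Cousin-I argument), and Kodaira's theorem giving a fibre-preserving isomorphism $\Theta:\mathrm{Jac}(f)\to X\setminus X^\sigma$, where $X^\sigma$ is the union of fibre components missing $\sigma(p)$. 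Since $\mathrm{Jac}(f)$ is a quotient of the trivial line bundle $L\cong\C^2$ over $\C$, and the quotient map is an unramified covering away from the singular fibres, the composition $L\to X\setminus X^\sigma$ is dominating at every point outside a one-dimensional analytic subset, which is exactly what dense dominability requires. Your phrase ``trivializing over the orbifold cover by means of the group structure on the smooth fibres'' gestures at this, but without the section and the Jacobian fibration there is no construction: the source is not $\C^2$ acting as a trivialization of the family but a line bundle covering $\mathrm{Jac}(f)$, and producing the section is where relative minimality and Kodaira's fibre classification enter.

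There is a second gap in your treatment of $(d)$. The direction $(d)\Rightarrow(b)$ is fine: $\pi_1^{\mathrm{orb}}(C,D)$ is a quotient of $\pi_1(X)$, and a hyperbolic orbifold group contains a nonabelian free subgroup, hence is not virtually abelian. But for $(b)\Rightarrow(d)$ you say that, once Kodaira surfaces are excluded, ``the abelian kernel of the extension does not introduce extra non-commutativity'' --- that is precisely the statement to be proved, not an argument. An extension of a virtually abelian group by an abelian group need not be virtually abelian (the Heisenberg-type group of a primary Kodaira surface is the standard counterexample, as you note), so one must show that among relatively minimal elliptic fibrations satisfying $(b)$ the non-virtually-abelian extensions occur \emph{only} for surfaces bimeromorphic to primary or secondary Kodaira surfaces. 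That classification-type statement is the content of \cite[Theorem 3.23]{BuzzardLu2000}, which the paper invokes for exactly this purpose; your proposal neither proves it nor cites it, so the equivalence with $(d)$ remains open in your write-up.
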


\begin{proof}
This result is analogous to \cite[Theorem 3.9]{BuzzardLu2000} by Buzzard and Lu, 
except that condition (a) in their theorem, that $X$ be dominable by $\C^2$,  
is now replaced by the stronger condition that $X$ be densely dominable. 
Hence, it follows that for such fibrations these two conditions are equivalent.
Condition (d) is equivalent to dominability of $X$ by \cite[Theorem 3.23]{BuzzardLu2000}. 
To prove the theorem, it thus suffices to show the implication (b) $\ \Rightarrow\ $(a).

The support $P\subset C$ of the divisor $D$ \eqref{eq:D} 
is a finite subset of $C$ by the assumption. 
By \cite[IV 9.12]{FarkasKra1992} the pair $(C,D)$ has a uniformizing orbifold
covering $\alpha:\wt C\to C$ where $\wt C$ is one of the Riemann
surfaces $\CP^1,\C,\D$ according to $\chi(C,D)>0$,  $\chi(C,D)=0$, 
or $\chi(C,D)<0$. (Here, $\D$ is the unit disc in $\C$.) This means that $\alpha:\wt C\to C$
is a surjective branched holomorphic map such that 
$\alpha:\wt C\setminus \alpha^{-1}(P)\to C\setminus P$ is a finite covering map,
and for each $p\in P$, $\alpha$ has ramification index $n_p$ at every point
of the fibre $\alpha^{-1}(p)$. Then, the pullback elliptic fibration 
$\tilde f:\wt X=\alpha^*X\to \wt C$ has no multiple fibres, and the natural map
$\wt X\to X$ covering $\alpha$ is an unramified covering map.
Since the properties of being dominable or densely dominable by $\C^2$ are 
invariant under covering maps, this reduces the problem to the case when 
$C\in\{\CP^1,\C,\D\}$ and $X\to C$ is an elliptic fibration without multiple fibres 
(see \cite[Proposition 3.4]{BuzzardLu2000}.) The case 
$C=\D$ is excluded by condition (b) 
(see \cite[proof of Theorem 3.9]{BuzzardLu2000}), and the case $C=\CP^1$ 
reduces to $C=\C$ by removing a point. 

It remains to show that the total space $X$ of an elliptic fibration $f:X\to \C$
without multiple fibres is densely dominable. By \cite[Lemma 3.8]{BuzzardLu2000} 
there exists a holomorphic section $\sigma:\C\to X$ of the fibration $f$.
(The proof uses the fact that every singular fibre $X_p=f^{-1}(p)$ 
which is not a multiple fibre admits an irreducible component of multiplicity one,
so there is a local holomorphic section of $X$ at every point. 
This is seen by inspecting Kodaira's list of non-multiple singular fibres,
see \cite[Table 3, p.\ 201]{BarthHulek2004}. A global section is then found
by solving a Cousin-1 problem.) Let $X^\sigma_p$ denote the union of all 
irreducible components of the fibre $X_p$ which do not contain the point $\sigma(p)$
(such exist only if $X_p$ is a singular fibre). Their union 
$X^\sigma=\bigcup_p X^\sigma_p$ is a closed one-dimensional complex subvariety 
of $X$. By a theorem of Kodaira \cite[Proposition V-9.1, p.\ 206]{BarthHulek2004}
there is a canonical fibre-preserving isomorphism 
$\Theta:\mathrm{Jac}(f)\to\Omega \subset X$ from the Jacobian fibration
$\mathrm{Jac}(f)\to \C$ onto the Zariski open domain 
$\Omega = X\setminus X^\sigma$ in $X$. Recall that the fibre of the 
Jacobian fibration over $p\in \C$ is 
\[
	\mathrm{Jac}(f)_p=\mathrm{Pic}^0(X_p)=H^1(X_p,\Oscr_{X_p}) / H^1(X_p,\Z),
\]
where the inclusion $H^1(X_p,\Z)\hra H^1(X_p,\Oscr_{X_p})$ comes from the 
cohomology sequence of the exponential sheaf sequence 
$0\to \Z\hra\C\to \C^*\to 0$: 
\[
	0 \to H^1(X_p,\Z) \to H^1(X_p,\Oscr_{X_p}) \to 
	H^1(X_p,\Oscr_{X_p}^*)=\Pic(X_p) \to H^2(X_p,\Z)\to 0
\]
(see \cite[p.\ 627]{BuzzardLu2000}). Thus, $\mathrm{Pic}^0(X_p)$
is the subgroup of the group $\Pic(X_p)$ consisting of holomorphic line bundles 
$E\to X_p$ with trivial first Chern class $0=c_1(E)\in H^2(X_p,\Z)$.  
We have that $L_p:=H^1(X_p,\Oscr_{X_p}) \cong\C$ for all $p\in\C$,
$L=\bigsqcup_{p\in \C}L_p\to \C$ is a holomorphic line bundle, which is trivial
by the Oka--Grauert principle, and $\mathrm{Jac}(f)$ is a quotient of $L$.
Let $A\subset \C$ be the discrete set of points $p\in \C$ for which the fibre $X_p$ 
is singular, and set $X'=\bigcup_{p\in A}X_p$ and $L'=\bigcup_{p\in A}L_p$.
The natural quotient projection $\phi: L\to \mathrm{Jac}(f)$ is a 
nonramified covering map on the Zariski open set $L\setminus L'$ in $L$. The 
holomorphic map 
\[ 
	h=\Theta\circ \phi: L\to \Omega = X\setminus X^\sigma
\] 
is then surjective, and $h:L\setminus L' \to \Omega\setminus X'$ 
is a nonramified covering projection. Hence, $h$ is dominating 
on the Zariski open domain $\Omega\setminus X'$, so $X$ is densely 
dominable by $L\cong \C^2$.
\end{proof}

Since a K3 surface has trivial fundamental group, 
we have the following corollary to Theorem \ref{th:elliptic} 
(see part (d) of the theorem).

%
%
\begin{corollary}\label{cor:K3elliptic}
Every elliptic K3 surface is densely dominable by $\C^2$, and hence Oka-1.
\end{corollary}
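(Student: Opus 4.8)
The plan is to read the corollary off Theorem \ref{th:elliptic}, using the fundamental-group criterion in its condition (d); the only real work is to check that an elliptic K3 surface satisfies all of the theorem's hypotheses, after which (d) is immediate. So let $X$ be an elliptic K3 surface with elliptic fibration $f:X\to C$. First I would identify the base: since a K3 surface is simply connected and $f$ has connected fibres, the induced map $\pi_1(X)\to\pi_1(C)$ is surjective, forcing $\pi_1(C)=1$; as $C$ is a smooth projective curve this gives $C=\CP^1$, which is quasi-projective (with $\overline C=\CP^1$ and $P=\varnothing$ in the notation of the theorem). Next, triviality of $K_X$ together with the adjunction formula shows that every rational curve on $X$ has self-intersection $-2$, so $X$ contains no $(-1)$-curve and is minimal; hence $f$ is relatively minimal. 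Finally, multiple fibres are supported over isolated points of the compact base, so there are at most finitely many of them.

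It remains to address the exceptional clause preceding condition (d), namely that $X$ is not bimeromorphic to a primary or secondary Kodaira surface. The cleanest argument uses the bimeromorphic invariance of the first Betti number (equivalently, of K\"ahlerness for compact complex surfaces): a K3 surface has $b_1=0$ and is K\"ahler, whereas primary and secondary Kodaira surfaces have $b_1=3$ and $b_1=1$ respectively and are non-K\"ahler. (Alternatively, all three classes consist of minimal surfaces of Kodaira dimension $0$, whose minimal models are unique, so a bimeromorphism would have to be an isomorphism, which is impossible.) With every hypothesis verified, Theorem \ref{th:elliptic} yields the equivalence of (a) and (d). Since $\pi_1(X)$ is trivial it is a fortiori a finite extension of an abelian group, so (d) holds; therefore (a) holds, i.e.\ $X$ is densely dominable by $\C^2$, and hence Oka-1 by part (a) of Corollary \ref{cor:dominable} (equivalently Theorem \ref{th:main1}).

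There is no analytic obstacle here; the entire content is the bookkeeping above, and the only point demanding care is the routing through condition (d). If one prefers to bypass the Kodaira-surface clause, I would instead verify condition (b) directly from the canonical bundle formula for a relatively minimal elliptic fibration, which over $\CP^1$ reads $\Oscr_X\cong K_X\cong f^*M\otimes\Oscr_X\bigl(\sum_i(m_i-1)F_i\bigr)$ with $\deg M=2g(C)-2+\chi(\Oscr_X)=-2+2=0$; triviality of $K_X$ then forces the effective divisor $\sum_i(m_i-1)F_i$ to vanish, so there are no multiple fibres, the associated $\mathbb Q$-divisor $D$ of \eqref{eq:D} is zero, and $\chi(C,D)=\chi(\CP^1)=2>0$. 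This single computation makes (b) transparent and, via the theorem, again delivers dense dominability by $\C^2$.
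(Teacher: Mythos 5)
Your proposal is correct and takes essentially the same route as the paper: the paper's entire proof is the one-line observation that a K3 surface has trivial fundamental group, so condition (d) of Theorem \ref{th:elliptic} holds and dense dominability by $\C^2$ follows. Your additional verifications of the hypotheses (base $\CP^1$, relative minimality via triviality of $K_X$, finiteness of multiple fibres, exclusion of Kodaira surfaces) and your alternative check of condition (b) via the canonical bundle formula are sound, but they go beyond what the paper records rather than constituting a different argument.
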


By the Enriques--Kodaira classification, every compact complex surface with 
Kodaira dimension $0$, which is not bimeromorphic to a 
torus or a K3 surface, is an elliptic fibration, so the question of their
(dense) dominability by $\C^2$ is covered by Theorem \ref{th:elliptic}. 

%
%
\subsection{Kodaira dimension $\kappa=1$}
Every such surface is an elliptic surface,
given as the total space of an elliptic fibration $X\to C$ over an elliptic curve.
These are called {\em properly elliptic surfaces.} 
The equivalence of (a) and (b) in Theorem \ref{th:elliptic} gives the following corollary.

%
%
\begin{corollary}\label{cor:properly elliptic}
Let $f:X\to C$ be an elliptic fibration over an elliptic curve $C$. Then, the elliptic
surface $X$ is densely dominable by $\C^2$ if and only if the fibration $f$ has no 
multiple fibres. Every elliptic surface with this property is an Oka-1 manifold.
\end{corollary}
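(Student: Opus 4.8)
The plan is to specialize Theorem \ref{th:elliptic} to the case where the base is an elliptic curve; the entire content of the corollary is then a one-line numerical computation together with the sign of $\deg D$. First I would check that the hypotheses of Theorem \ref{th:elliptic} are met. Since $X$ is a properly elliptic surface ($\kappa=1$) as in the Enriques--Kodaira classification, the fibration $f:X\to C$ is relatively minimal, and because the base $C$ is compact the multiple fibres lie over a finite subset of $C$, so $f$ has at most finitely many multiple fibres. Hence Theorem \ref{th:elliptic} applies to $f$, and in particular conditions (a) and (b) of that theorem are equivalent.

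The computation is as follows. An elliptic curve $C$ is compact of genus $g(C)=1$, so in the notation of \eqref{eq:chiC} we have $m=0$ and
\[
	\chi(C) = 2 - 2g(C) - m = 2 - 2 = 0.
\]
Therefore condition (b) of Theorem \ref{th:elliptic}, namely $\chi(C,D) = \chi(C) - \deg D \ge 0$, reduces to $-\deg D \ge 0$, i.e.\ $\deg D \le 0$. On the other hand, the $\mathbb{Q}$-divisor $D$ of \eqref{eq:D} satisfies $\deg D \ge 0$ by \eqref{eq:degD}, with $\deg D = 0$ exactly when there are no multiple fibres. Combining these two facts, condition (b) holds if and only if $\deg D = 0$, that is, if and only if $f$ has no multiple fibres. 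Invoking the equivalence (a) $\Leftrightarrow$ (b) then shows that $X$ is densely dominable by $\C^2$ precisely when $f$ has no multiple fibres, and in that case $X$ is an Oka-1 manifold by Corollary \ref{cor:dominable}(a) (this last conclusion is in fact already recorded in part (a) of Theorem \ref{th:elliptic}).

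Since all of the analytic and geometric substance is carried by Theorem \ref{th:elliptic} (and the Buzzard--Lu results feeding into it), there is no genuine obstacle in the corollary itself. The only point requiring care is that $\deg D$ is \emph{nonnegative}, so that the inequality $\chi(C,D)\ge 0$ forces $\deg D$ to \emph{vanish} rather than merely to be bounded above; this is immediate from \eqref{eq:degD}, as each summand $1-1/n_p$ is nonnegative and vanishes if and only if $n_p=1$.
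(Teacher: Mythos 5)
Your proof is correct and follows essentially the same route as the paper: since an elliptic base has $\chi(C)=0$, condition (b) of Theorem \ref{th:elliptic} becomes $-\deg D\ge 0$, and nonnegativity of $\deg D$ in \eqref{eq:degD} forces $D=0$, i.e.\ no multiple fibres, after which the equivalence (a)$\Leftrightarrow$(b) gives the conclusion. One marginal inaccuracy in your hypothesis check: relative minimality of $f$ does not follow from $X$ being properly elliptic (blowing up a point preserves $\kappa_X=1$ but creates a $-1$ curve in a fibre); it is an implicit standing hypothesis of the corollary, which the paper's own proof likewise does not verify.
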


\begin{proof}
Let $D$ be the $\mathbb{Q}$-divisor \eqref{eq:D}. Since $\chi(C)=0$, we have that 
\[
	\chi(C,D)=\chi(C)-\deg D=-\deg D,
\] 
so $\chi(C,D)\ge 0$ (condition (b) in Theorem \ref{th:elliptic}) holds if and only if 
$D=0$, which means that there are no multiple fibres.
\end{proof}

%
%
%
%
\section{A conjecture on rationally connected manifolds}\label{sec:RC}

A complex manifold $X$ is said to be rationally connected 
if any pair of points in $X$ is connected by a rational curve $\CP^1\to X$. 
For results on this class of manifolds, we refer to the papers by Kollar et al.\ 
\cite{Kollar1991,KollarMiyaokaMori1992} and the monographs by 
Koll\'ar \cite{Kollar1995E} and Debarre \cite{Debarre2001}.
By \cite[Theorem 2.1]{KollarMiyaokaMori1992}, several possible definitions 
of this class coincide. In particular, if every sufficiently general pair of points 
in $X$ can be connected by an irreducible rational curve, then 
$X$ is rationally connected. 

There are reasons to believe that the 
following conjecture holds true.

%
%
\begin{conjecture}\label{con:RC}
Every rationally connected projective manifold is an Oka-1 manifold.
\end{conjecture}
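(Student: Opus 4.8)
The plan is to reduce the conjecture to Theorem \ref{th:main1} by proving that a compact rationally connected K\"ahler manifold $X$ of dimension $n$ is densely dominable by spanning tubes of lines; equivalently, one may verify the local spray approximation property of Definition \ref{def:LSAP} at a general point and invoke Proposition \ref{prop:LSAP}(b). A first, clarifying step is that the K\"ahler hypothesis does real work. Since $X$ is rationally connected, it carries no nonzero holomorphic $p$-forms for $p\geq 1$, so in particular $h^{2,0}(X)=0$; the Hodge decomposition then gives $H^2(X,\R)=H^{1,1}(X)_\R$, whence the open K\"ahler cone meets $H^2(X,\Q)$ and $X$ is projective by the Kodaira embedding theorem. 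This frees me to use the full algebraic theory of rational curves.

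By the theorem of Koll\'ar--Miyaoka--Mori \cite{KollarMiyaokaMori1992} (see also \cite{Kollar1995E,Debarre2001}), through a general point $x\in X$ there passes a \emph{very free} rational curve $g\colon\CP^1\to X$, that is, one with $g^*TX\cong\bigoplus_{i=1}^n\Ocal_{\CP^1}(a_i)$ and all $a_i\geq 1$. Because such curves deform freely and their tangent directions at $x$ fill a Zariski-dense subset of $\P(T_xX)$, I can select $n$ very free curves $g_1,\dots,g_n$ with $g_i(0)=x$ whose velocities $v_i=g_i'(0)$ form a basis of $T_xX$. Let $\Lambda=\bigcup_{i=1}^n\C e_i\subset\C^n$ be the spanning star-tree of Definition \ref{def:tree}, mapped branchwise by the $g_i$. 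Very freeness makes each $g_i$ move in an unobstructed family $g_i^{\,s}\colon\CP^1\to X$ with $s$ in a ball $\B^{N_i}$ and $g_i^{\,0}=g_i$, producing a holomorphic map on the full branch tube $\C\times\B^{N_i}\to X$, $(t,s)\mapsto g_i^{\,s}(t)$, which is submersive at $(0,0)$. Over a coordinate chart around $x$ I would assemble the $n$ branch families into a single holomorphic map $F$ on a spanning tube $T\subset\C^N$ by adding the branch contributions, so that $dF_0$ carries the branch directions to $v_1,\dots,v_n$ and is therefore onto $T_xX$.

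The hard part will be to extend this origin-submersive map to the \emph{entire} (unbounded) spanning tube while keeping it holomorphic, since away from the origin the branches land in far-apart regions of $X$ where the naive additive gluing over a single chart no longer makes sense. I would attack this by a Cartan-type gluing of the global branch families $g_i^{\,s}(t)$ with the central chart construction over their overlaps, using the positivity $a_i\geq 1$ to kill the relevant $H^1$ obstructions and thus patch unobstructedly; once $F$ is defined on all of $T$ and submersive near the origin, dense dominability by tubes of lines follows by letting $x$ range over the Zariski-open locus of very free points, and Theorem \ref{th:main1} yields the Oka-1 property. This gluing is exactly the input that the alternative route through Gournay's Theorem 1.1.1 \cite{Gournay2012} was meant to supply, and which the authors could not verify in detail; producing a transparent, Kähler-valid proof of it is the crux of the whole argument and the step I expect to resist most.
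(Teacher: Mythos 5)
You have set out to prove a statement that the paper itself presents as an open conjecture: the authors give no proof of Conjecture \ref{con:RC}, so there is no ``paper proof'' to compare against. More importantly, the strategy you propose is precisely the one the authors report having tried and abandoned: building spanning trees of very free rational curves through a general point of the (projective, after your correct and standard K\"ahler-to-projective reduction) manifold $X$, attempting to promote them to dominating holomorphic maps from spanning tubes of lines, and then invoking Theorem \ref{th:main1}. They state explicitly that these attempts ``remained inconclusive'' (Section \ref{sec:RC} and \cite[Sect.\ 9]{AlarconForstneric2023Oka1}), and that the alternative route through Proposition \ref{prop:weakOka1} plus Gournay's Runge theorem \cite[Theorem 1.1.1]{Gournay2012} was downgraded to a conjecture because they could not verify Gournay's proof. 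So your proposal retraces the known dead end rather than resolving it.

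The gap, which you yourself flag in your final paragraph, is genuine and is exactly the open crux. Very freeness gives you, for each branch, an unobstructed family $g_i^{\,s}:\CP^1\to X$, hence a dominating holomorphic map from a tube around a \emph{single} line, and deformation theory (smoothing of combs in the sense of Koll\'ar--Miyaoka--Mori \cite{KollarMiyaokaMori1992,Kollar1995E}) lets you smooth the \emph{compact} nodal curve into new rational curves. But Theorem \ref{th:main1} requires a single holomorphic map $F:T\to X$ defined on an entire spanning tube $T\subset\C^N$ --- a connected, noncompact union of translates of the whole tree --- submersive over the given point. The $n$ branch families do not agree on the overlaps near the nodes, and there is no mechanism to glue them: the $H^1$-vanishing coming from $a_i\ge 1$ controls deformations of compact curves, not the patching of manifold-valued holomorphic maps on a noncompact tube, and Cartan-type gluing needs a linear or spray structure over the overlap which a general compact target does not provide. (The additive assembly in a coordinate chart around $x$ only makes sense locally and says nothing about the tube at infinity.) Since the entire conclusion hinges on this unproved extension step --- the very input Gournay's theorem was supposed to supply --- the proposal does not prove the conjecture; it restates it. The peripheral steps (projectivity via $h^{2,0}=0$ and Kodaira, and choosing $n$ very free curves with independent velocities at $x$) are fine, but they were never the obstruction.
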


One indication is provided by the theorem of Campana and Winkelmann
\cite[Theorem 5.2]{CampanaWinkelmann2023} saying that every rationally 
connected projective manifold $X$ admits an entire holomorphic 
curve $\C\to X$ whose image contains a given countable subset of $X$, 
with prescribed finite order jets in these points 
\cite[Corollary 5.7]{CampanaWinkelmann2023}. 
In particular, $X$ contains dense entire curves $\C\to X$. 
Their construction relies on smoothing a comb or a tree of rational curves; 
see \cite[Proposition 5.1 and Lemma 5.5]{CampanaWinkelmann2023}.
This technique goes back to the seminal paper
\cite{KollarMiyaokaMori1992} by Koll\'ar, Miyaoka, and Mori; 
see also Koll\'ar \cite[Theorem 7.6, p.\ 155]{Kollar1995E}.

%
%
A partial affirmative answer to Conjecture \ref{con:RC}
was given by Benoist and Wittenberg in 
\cite[Theorem 1.2]{BenoistWittenberg2025}.
They proved that every rationally simply connected projective manifold 
has the {\em tight approximation property}, 
a condition introduced in \cite{BenoistWittenberg2021} 
which means that holomorphic
maps from open subsets of compact Riemann surfaces 
to the given algebraic manifold can be approximated uniformly on 
compacts by regular algebraic maps. 
When the target manifold is compact, 
the tight approximation property is equivalent to the 
aOka-1 property, which is the Runge approximation property by
regular algebraic maps from finitely punctured compact Riemann surfaces
(see \cite[Definition 1.5]{ForstnericLarusson2025MZ}).
Clearly, such a map extends across the punctures if the target manifold
is compact. Since the aOka-1 property implies the Oka-1 property 
(see \cite[Proposition 1.9]{ForstnericLarusson2025MZ}),
it follows from \cite[Theorem 1.2]{BenoistWittenberg2025} 
that every rationally simply connected projective manifold
is aOka-1 and Oka-1. This holds 
in particular for all smooth hypersurfaces 
of degree $d$ in $\CP^n$ with ${n\geq d^2-1}$
(see \cite[Corollary 1.3]{BenoistWittenberg2025}). 
 
%
%
A related result of Gournay \cite[Theorem 1.1.1]{Gournay2012} gives 
Runge approximation of almost holomorphic maps from 
compact Riemann surfaces to certain compact almost complex manifolds.
For an algebraic target manifold, this agrees 
with the tight approximation property of Benoist and Wittenberg
mentioned above. The conditions on the target manifold
in Gournay's theorem hold true 
for all rationally connected projective manifolds. 
Hence, using his theorem at face value, 
it would follow that every such 
manifold enjoys the Oka-1 property with approximation. 
In the earlier version of our preprint \cite[Sect.\ 9]{AlarconForstneric2023Oka1}
it is also described how jet interpolation in finitely many points 
could be added, thereby verifying Conjecture \ref{con:RC}.
Unfortunately, we were unable to understand the proof of 
\cite[Theorem 1.1.1]{Gournay2012}.

%
%
%
%
\section{Dense holomorphic curves in an arbitrary complex manifold}\label{sec:dense}

In this section we prove the following result which generalizes the case
$M=\Delta$ obtained by Forstneri\v c and Winkelmann in \cite{ForstnericWinkelmann2005MRL}. In the last part of the section,
we give an analogous result for holomorphic Legendrian curves
in complex contact manifolds; see Theorem \ref{th:denseLegendrian}.

%
%
\begin{theorem}\label{th:dense}
Assume that $X$ is a connected complex manifold endowed 
with a complete distance function $\dist_X$, $\overline M=M\cup bM$ 
is a compact bordered Riemann surface, and $f:\overline M\to X$ is 
a map of class $\Ascr(\overline M)$. Given a compact subset 
$K\subset M$, a countable set $B\subset X$, 
and a number $\epsilon>0$ there is a 
holomorphic map $F: M\to X$ such that 
\begin{enumerate}[\rm (a)]
\item $\sup_{p\in K}\dist_X(F(p),f(p))<\epsilon$, 
\item $F$ agrees with $f$ to a given finite order at a given finite set 
of points $C\subset M$, and
\item $B\subset F(M)$.
\end{enumerate}
Furthermore, $F$ can be chosen to be an immersion if $\dim X>1$, 
and to be an injective immersion if $\dim X>2$, 
whenever condition {\rm (b)} allows it.
\end{theorem}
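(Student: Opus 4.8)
The plan is to realize $F$ as the limit of a sequence of holomorphic maps defined on an exhaustion of $M$, enumerating $B=\{b_1,b_2,\ldots\}$ and arranging that each $b_j$ is hit at a chosen node $a_j\in M$ whose value is then frozen for the rest of the construction. Since $X$ is an arbitrary (possibly hyperbolic) manifold, no Oka-theoretic gluing is available; instead I would exploit the fact that the only prescribed data are the approximation on $K$ and the jets at $C$, so elsewhere the map may be routed freely through $X$ using its connectedness, and the passage from one domain to a larger one is carried out purely by the Mergelyan theorem (Theorem \ref{th:Mergelyan}). This generalizes the disc case of \cite{ForstnericWinkelmann2005MRL}.

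Concretely, I would first approximate $f$ on a smoothly bounded Runge compact set $L_0\subset M$ with $K\cup C\subset\mathring L_0$ by a holomorphic map $F_0$, using Theorem \ref{th:Mergelyan} with jet interpolation at $C$. Then I build inductively smoothly bounded Runge compact sets $L_0\subset L_1\subset\cdots$ with $\bigcup_k\mathring L_k=M$, holomorphic maps $F_k\colon L_k\to X$, points $a_k\in\mathring L_k\setminus L_{k-1}$, and numbers $\epsilon_k>0$. Given $F_{k-1}$, choose a boundary point $p\in bL_{k-1}$, an embedded arc $\lambda_k$ running from $p$ into $L_k\setminus\mathring L_{k-1}$ to a point $a_k$, and extend $F_{k-1}$ continuously along $\lambda_k$ by following a path in $X$ from $F_{k-1}(p)$ to $b_k$ (such a path exists because $X$ is connected), so that the extended map takes the value $b_k$ at $a_k$. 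When the growth of the domain requires a change of topology, I would additionally attach the core arc of a handle between two boundary points and extend the map over it by an arbitrary path joining the two prescribed endpoint values, again using connectedness; the resulting set $S_k=L_{k-1}\cup(\text{arcs})$ is admissible for Mergelyan approximation. Applying Theorem \ref{th:Mergelyan} to the map on $S_k$, with \emph{exact} interpolation imposed simultaneously at all nodes $a_1,\ldots,a_k$ (matching the already-fixed values $b_1,\ldots,b_k$) and with the prescribed jet at $C$, yields a holomorphic map on a neighbourhood of $S_k$; I then take $L_k$ to be a smoothly bounded regular neighbourhood of $S_k$ inside that neighbourhood and set $F_k$ equal to the restriction. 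By construction $F_k(a_i)=b_i$ for all $i\le k$, $F_k$ has the correct jet at $C$, and $F_k$ is $\epsilon_k$-close to $F_{k-1}$ on $L_{k-1}$.

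Because each $b_i$ is carried as an interpolation node at every subsequent stage, the value $F_k(a_i)=b_i$ is exact for all $k\ge i$; hence, choosing $\sum_k\epsilon_k$ small and invoking completeness of $\dist_X$, the maps $F_k$ converge uniformly on compacts to a holomorphic map $F\colon M\to X$ with $F(a_i)=b_i$ for every $i$, so $B\subset F(M)$, giving (c). The same $\epsilon_k$-control gives $\sup_K\dist_X(F,f)<\epsilon$, namely (a), while the jet condition at $C$ persists in the limit, giving (b). The exhaustion $\bigcup_k\mathring L_k=M$ is arranged by always extending the arcs toward $bM$; as $M$ has finite topology, the handle-attaching (critical) steps occur only finitely often, after which the domains grow through annular (noncritical) steps.

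Finally, to obtain an immersion when $\dim X>1$ and an injective immersion when $\dim X>2$, I would weave the jet transversality arguments of Remark \ref{rem:transversality} and Corollary \ref{cor:densecurves} into the induction, perturbing each $F_k$ — within its $\epsilon_k$-neighbourhood and preserving the interpolation at $a_1,\ldots,a_k$ and at $C$ — so that it becomes an immersion (resp.\ injective immersion) on $L_{k-1}$; since these are open conditions stable under small perturbations, they pass to the limit on all of $M$, subject to the prescribed jets at $C$ permitting it. The main obstacle, and the point requiring the most care, is the reconciliation of three competing demands at each stage: extending holomorphically across the enlarged domain for a target with no flexibility properties, freezing the already-placed values $b_1,\ldots,b_{k-1}$ exactly, and keeping the new approximation small enough for convergence. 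The resolution is that the extension is never an approximation of a prescribed map on a two-dimensional region but only a free Mergelyan extension over a one-dimensional skeleton, so the connectedness of $X$ together with the finite interpolation of Theorem \ref{th:Mergelyan} suffices.
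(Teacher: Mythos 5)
Your inductive bookkeeping (enumerating $B$, placing a node $a_k$ on an attached arc, freezing the values at all earlier nodes as exact interpolation conditions in every later Mergelyan step, controlling errors by $\sum_k\epsilon_k$ and completeness of $\dist_X$, and adding transversality for the immersion statements) is the same as in the paper. But your domain scheme has a genuine gap. By your own construction, $L_k$ is a regular neighbourhood of the skeleton $S_k=L_{k-1}\cup\lambda_k$ \emph{inside whatever neighbourhood Theorem \ref{th:Mergelyan} happens to provide}, and that neighbourhood is not under your control: Mergelyan approximation of a manifold-valued map gives holomorphy on \emph{some}, possibly very thin, neighbourhood of $S_k$, with no lower bound on its size. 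Hence your domains grow only by uncontrolled thin tentacles, and $\bigcup_k\mathring L_k$ will in general be a proper open subset of $M$ of uncontrolled conformal type. The limit map is then a hitting map from \emph{some} open Riemann surface; this is essentially the paper's Corollary \ref{co:dense-2} (obtained ``at the cost of losing the control on the complex structure''), not Theorem \ref{th:dense}, whose entire point is that the bordered surface $M$ is prescribed in advance.

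The escape you invoke --- that after finitely many critical steps ``the domains grow through annular (noncritical) steps'' --- is not available for an arbitrary target. Passing from $L$ to a larger $L'$ of which $L$ is a deformation retract requires approximating holomorphic maps $L\to X$ uniformly on $L$ by holomorphic maps $L'\to X$; this is exactly the noncritical extension step that the paper can only perform under dominability hypotheses (Lemma \ref{lem:noncritical}, Proposition \ref{prop:weakOka1}), and it fails for general $X$. Concretely, take $X=\D$ and $0<r<r'<s<1$: the map $g(z)=z/r'$ is holomorphic on $r'\D\supset \overline{r\D}$ with values in $\D$ and $g'(0)=1/r'$, while every holomorphic $h:s\D\to\D$ has $|h'(0)|\le 1/s<1/r'$ by Schwarz--Pick; by Cauchy estimates $g$ therefore cannot be approximated uniformly on $\overline{r\D}$ by such $h$. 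You note at the outset that no Oka-theoretic flexibility of $X$ may be used, but this step smuggles it back in. The paper's proof avoids the issue entirely by never working on subdomains of $M$: every map $f_j$ in its induction is defined on all of $\overline M$, realized via Stout's theorem \cite[Theorem 8.1]{Stout1965TAMS} as a compact domain in an open Riemann surface $R$. To capture $b_j\notin f_{j-1}(M)$ one attaches an arc $\gamma$ to $\overline M$ \emph{on the outside}, in $R\setminus \overline M$, extends $f_{j-1}$ over $\gamma$ so that it takes the value $b_j$ at an interior point $y$ of $\gamma$, applies Theorem \ref{th:Mergelyan} on $\overline M\cup\gamma$ to get a holomorphic map $g$ on a neighbourhood $V$, and then precomposes with a conformal diffeomorphism $\phi:\overline M\to\phi(\overline M)\subset V$ --- furnished by \cite[Theorem 6.7.1]{AlarconForstnericLopez2021}, see also \cite[Theorem 2.3]{ForstnericWold2009} --- which is close to the identity on $K_{j-1}$, agrees with the identity to high order at $C\cup\{a_0,\ldots,a_{j-1}\}$, and satisfies $y\in\phi(M)$. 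Then $f_j=g\circ\phi$ is again a map on all of $\overline M$ whose image contains $b_j$, and the conformal structure of $M$ is untouched. This domain-reparameterization device is the idea missing from your proposal; without it, or a substitute for it, your induction cannot produce a map on the given surface $M$.
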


We record the following immediate corollary.

%
%
\begin{corollary}\label{co:denseBRS}
If $X$ is a complex manifold and $M$ is a bordered Riemann surface, then there is a 
holomorphic map $M\to X$, which can be chosen to be an immersion if $\dim X>1$
and an injective immersion if $\dim X>2$, whose image is everywhere dense in $X$. 
\end{corollary}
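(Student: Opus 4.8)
The plan is to obtain Corollary~\ref{co:denseBRS} as an immediate specialization of Theorem~\ref{th:dense}, so that all of the analytic work is already absorbed into the theorem and nothing substantial remains. First I would observe that a dense image forces the target to be connected: since $M$ is connected, the image $F(M)$ is connected, and a connected dense subset has connected closure equal to the whole space. Hence the statement is meaningful only for connected $X$, and I will assume this, matching the standing hypothesis of Theorem~\ref{th:dense}. I would then equip $X$ with a complete distance function $\dist_X$ coming from any complete Riemannian metric, which exists because $X$, being a manifold, is second countable and paracompact; for the same reason $X$ admits a countable dense subset, which I fix and call $B=\{b_1,b_2,\ldots\}$.

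To invoke the theorem one must supply an initial map of class $\Ascr(\overline M)$, and here I would simply take $f\colon \overline M\to X$ to be a constant map $f\equiv x_0$ for some $x_0\in X$, which is trivially of class $\Ascr(\overline M)$. I would choose the interpolation set to be empty, $C=\varnothing$, let $K\subset M$ be any nonempty compact subset, and fix $\epsilon>0$ arbitrarily. Theorem~\ref{th:dense} then yields a holomorphic map $F\colon M\to X$ satisfying conditions (a)--(c); in particular $B\subset F(M)$ by (c), and since $B$ is dense in $X$ the image $F(M)$ is dense in $X$, which is the main assertion.

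For the immersion and injectivity clauses I would exploit that the corresponding conclusion of Theorem~\ref{th:dense} holds \emph{whenever condition (b) allows it}, i.e., provided the prescribed jets at the points of $C$ do not obstruct being an immersion, respectively an injective immersion. In the present application $C=\varnothing$, so there is no interpolation constraint whatsoever, and the final clause of the theorem applies unconditionally: $F$ may be chosen to be an immersion when $\dim X>1$ and an injective immersion when $\dim X>2$. This would complete the argument. I anticipate essentially no obstacle at the level of the corollary: the entire difficulty lives in Theorem~\ref{th:dense} (the iterated Mergelyan/exhaustion construction that passes the curve through all prescribed points while keeping it an embedding), and the only things to check for the corollary are the routine facts that $X$ may be taken connected and second countable and that a constant map is a legitimate starting datum.
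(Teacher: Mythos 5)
Your proposal is correct and matches the paper's intended argument: the paper records Corollary \ref{co:denseBRS} as an immediate consequence of Theorem \ref{th:dense}, obtained exactly as you do by letting $B$ be a countable dense subset of the (necessarily connected) manifold $X$, starting from any map of class $\Ascr(\overline M)$ such as a constant, and taking the interpolation set empty so that the immersion and injectivity clauses of the theorem apply without constraint. Your added remarks on connectedness, the existence of a complete distance function, and separability are exactly the routine checks the paper leaves implicit.
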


It was shown by Forn\ae ss and Stout 
\cite{FornaessStout1977AJM,FornaessStout1982AIF} that 
every connected complex manifold $X$ of dimension $n$ admits 
surjective holomorphic maps $\Delta^n\to X$ and $\B^n\to X$ from the polydisc and the 
ball in $\C^n$. Hence, to obtain density, it would suffice to prove Corollary \ref{co:denseBRS}
for maps $M\to\Delta^n$. However, there seems to be no particular advantage in this 
reduction, which furthermore does not give the approximation and general position 
result in Theorem \ref{th:dense}.

%
%
\begin{proof}[Proof of Theorem \ref{th:dense}]
We assume without loss of generality that $M$ is a smoothly bounded compact domain 
without holes in an open Riemann surface $R$ (see Stout \cite[Theorem 8.1]{Stout1965TAMS})
and $f_0=f$ is holomorphic on $\overline M$ (see Theorem \ref{th:Mergelyan}).
Let $K$, $B$, and $\epsilon$ be as in the statement, and assume as we may that $K_0=K$ 
is a smoothly bounded compact domain which is a strong deformation retract of $M$,  
the given countable set $B=\{b_1,b_2,\ldots\}$ is infinite,  
$C\subset \mathring K_0$, and $f(C)\cap B=\varnothing$.
(Here, $C$ is the finite set for the interpolation condition in {\rm (b)}.) 
Pick a point $a_0\in \mathring K_0\setminus (C\cup f^{-1}(B))$ and set $b_0=f_0(a_0)$. 
Let $\epsilon_0=\epsilon/2$ and $K_{-1}=\varnothing$. 
We shall inductively construct a sequence of holomorphic maps $f_j:\overline M\to X$, 
smoothly bounded compact domains $K_j\subset M$, numbers $\epsilon_j>0$, 
and points $a_j\in M$, $j\in\N$, satisfying the following conditions.
\begin{enumerate}[\rm (1$_{j}$)]
\item $K_{j-1}\cup\{a_j\}\subset \mathring K_j$ and $K_j$ is a strong deformation 
retract of $M$.
 
\item $\epsilon_j<\epsilon_{j-1}/2$.
 
\item $\sup_{p\in K_{j-1}}\dist_X(f_j(p),f_{j-1}(p))<\epsilon_j$.
 
\item $f_j(a_i)=b_i$ for all $i\in\{0,\ldots,j\}$.
 
\item $f_j$ agrees with $f$ to a given finite order at every point in $C$.
\end{enumerate}
In addition, we shall guarantee that
\begin{equation}\label{eq:cupK_jM}
	\bigcup_{j\in\N} K_j=M.
\end{equation}

Assume that we have such a sequence in hand. 
Conditions {\rm (1$_j$)}, {\rm (2$_j$)}, {\rm (3$_j$)}, 
and \eqref{eq:cupK_jM} ensure that there is a limit holomorphic map 
$F=\lim_{j\to\infty}f_j:M\to X$ satisfying condition (a) 
in the statement of the theorem. Moreover, conditions {\rm (5$_j$)} 
and {\rm (4$_j$)} guarantee (b) and (c), respectively. 
So, the map $F$ satisfies the conclusion of the theorem. 
Note that the final statement 
can be granted as explained in Remark \ref{rem:transversality}; 
it suffices to take the number $\epsilon_j>0$ sufficiently small at each step 
of the inductive construction.

It thus remains to explain the induction. The basis is provided by $f_0=f$, $K_0=K$, 
$\epsilon_0=\epsilon/2$, and $a_0\in \mathring K_0$; note that conditions {\rm (2$_0$)} 
and {\rm (3$_0$)} are void. For the inductive step, fix $j\in\N$ and assume that we have 
objects $f_{j-1}$, $K_{j-1}$, $\epsilon_{j-1}$, and $a_{j-1}$ satisfying conditions 
{\rm (1$_{j-1}$)}, {\rm (4$_{j-1}$)}, and {\rm (5$_{j-1}$)}. 
Choose any $\epsilon_j>0$ meeting {\rm (2$_j$)}. If $b_j\in f_{j-1}(M)$ then we choose 
any point $a_j\in M$ with $f_{j-1}(a_j)=b_j$ and any smoothly bounded compact domain
$K_j\subset M$ satisfying {\rm (1$_j$)}, 
and set $f_j=f_{j-1}$. Assume on the contrary that $b_j\notin f_{j-1}(M)$. 
Choose a point $x\in bM\subset R$ 
and attach to $\overline M$ a smooth embedded arc $\gamma\subset R$ such that 
$\gamma\cap \overline M=\{x\}$ and the intersection of $bM$ and $\gamma$ is 
transverse at $x$. Let $x'\in R\setminus\overline M$ denote the other endpoint of 
$\gamma$. Fix a point $y\in \gamma\setminus\{x,x'\}$ and extend $f_{j-1}$ to a 
smooth map $f_{j-1}:\overline M\cup\gamma\to X$ which is holomorphic on a
neighbourhood of $\overline M$ and satisfies $f_{j-1}(y)=b_j$. 
Theorem \ref{th:Mergelyan} furnishes a holomorphic map $g:V\to X$ on an 
open neighbourhood $V\subset R$ of $\overline M\cup\gamma$ such that
\begin{enumerate}[\rm (i)]
\item $\sup_{p\in \overline M\cup\gamma}\dist_X(g(p),f_{j-1}(p))<\epsilon_j/2$,
 
\item $g(a_i)=b_i$ for all $i\in\{0,\ldots,j-1\}$,
 
\item $g(y)=b_j$, and
 
\item $g$ agrees with $f$ to a given order at every point in $C$.
\end{enumerate}
For (ii) and (iv) take into account {\rm (4$_{j-1}$)} and {\rm (5$_{j-1}$)}.
Now, \cite[Theorem 6.7.1]{AlarconForstnericLopez2021}
(see also \cite[Theorem 2.3]{ForstnericWold2009}) furnishes a conformal diffeomorphism 
$\phi: \overline M\to\phi(\overline M)\subset V$ such that 
$\phi$ agrees with the identity map to a given order in the  points in the finite set 
$C\cup\{a_0,\ldots,a_{j-1}\}\subset M$, $\phi$ is arbitrarily close to the identity map 
on $K_{j-1}$, and we have that
\begin{equation}\label{eq:yphi(M)}
	y\in \phi(M).
\end{equation} 
In particular, $\phi$ can be chosen so that the map $f_j=g\circ\phi:\overline M\to X$ 
of class $\Ascr(\overline M)$ satisfies
\begin{enumerate}[\rm (I)]
\item $f_j(a_i)=g(a_i)$ for all $i\in\{0,\ldots,j-1\}$,
 
\item $b_j\in f_j(M)$ (see (iii) and \eqref{eq:yphi(M)}),
 
\item $\sup_{p\in K_{j-1}}\dist_X(f_j(p),g(p))<\epsilon_j/2$, and
 
\item $f_j$ agrees with $g$ to a given order at every point in $C$.
\end{enumerate}
By Theorem \ref{th:Mergelyan} we may assume that $f_j$ is holomorphic on $\overline M$. 
Finally, (II) enables us to choose a point $a_j\in M$ with $f_j(a_j)=b_j$ and a 
smoothly bounded compact domain $K_j\subset M$ satisfying {\rm (1$_j$)}. 
It is then clear in view of these choices and conditions (i)--(iv) and (I)--(IV) 
that $f_j$, $K_j$, $\epsilon_j$ and $a_j$ satisfy the requirements in {\rm (1$_j$)}--{\rm (5$_j$)}. 
This closes the induction and completes the proof of the theorem.
Note that \eqref{eq:cupK_jM} is guaranteed by choosing the compact 
domain $K_j\subset M$ sufficiently large at each step of the inductive construction.
\end{proof}

Combining the arguments in 
\cite[proofs of Theorems 1.6 and 1.8]{AlarconForstneric2024AMPA} 
with the proof of Theorem \ref{th:dense}, one may easily establish the analogous hitting 
results for some classes of surfaces of infinite topology, at the cost of losing the 
control on the complex structure. In particular, we have the following corollary. 

%
%
\begin{corollary}\label{co:dense-2}
Let $X$ be a connected complex manifold and $B\subset X$ be a countable subset. 
The following assertions hold.
\begin{enumerate}[\rm (i)]
\item On every compact Riemann surface $R$ there is a Cantor set $C$ whose complement 
$M=R\setminus C$ admits a holomorphic map $F:M\to X$ with $B\subset F(M)$.
\item On every open orientable smooth surface $S$ there is a complex 
structure $J$ such that the Riemann surface $M=(S,J)$ 
admits a holomorphic map $F:M\to X$ with $B\subset F(M)$.
\end{enumerate}
In both cases, the holomorphic map $F:M\to X$ can be chosen to be an immersion if 
$\dim X>1$ and an injective immersion if $\dim X>2$.
\end{corollary}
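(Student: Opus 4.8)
The plan is to run, for each of the two statements, an inductive construction modelled on the proof of Theorem \ref{th:dense}, the difference being that the ambient Riemann surface $M$ is no longer fixed in advance but is built simultaneously with the maps. In both cases I would exhaust the eventual target surface by an increasing sequence of smoothly bounded compact bordered Riemann surfaces $M_1\subset M_2\subset\cdots$ with $\bigcup_j M_j=M$, each $M_j$ being a strong deformation retract of $M_{j+1}$ up to the attachment of a single arc. Writing $B=\{b_1,b_2,\ldots\}$, I would inductively produce holomorphic maps $f_j$ on neighbourhoods of $M_j$, points $a_j\in M_j$, and numbers $\epsilon_j>0$ with $\epsilon_j<\epsilon_{j-1}/2$, such that $f_j$ is $\epsilon_j$-close to $f_{j-1}$ on $M_{j-1}$ and $f_j(a_i)=b_i$ for all $i\le j$. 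Taking the $\epsilon_j$ to decrease rapidly and using completeness of $\dist_X$, the sequence converges to a holomorphic limit $F=\lim_j f_j:M\to X$ with $B\subset F(M)$; the immersion and embedding refinements then follow exactly as in Remark \ref{rem:transversality} by keeping $\epsilon_j$ small at every stage.

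The heart of the induction is the hitting step that forces $b_j\in f_j(M_j)$ when $b_j\notin f_{j-1}(M_{j-1})$. Here I would reuse the mechanism from the proof of Theorem \ref{th:dense}: attach to $M_{j-1}$ a short embedded arc $\gamma$ meeting $bM_{j-1}$ transversely at one endpoint, extend $f_{j-1}$ smoothly across $\gamma$ so that it attains the value $b_j$ at an interior point $y\in\gamma$, and apply the Mergelyan theorem (Theorem \ref{th:Mergelyan}) to approximate by a map holomorphic on a neighbourhood while interpolating at $a_0,\ldots,a_{j-1}$ and at the finite set $C$. The essential departure from Theorem \ref{th:dense} is that, since we are now free to choose the conformal structure (or the Cantor set), I would \emph{not} pull the arc back by a near-identity conformal diffeomorphism onto a fixed surface as in \cite[Theorem 6.7.1]{AlarconForstnericLopez2021}; instead the arc, together with a thin neighbourhood, simply becomes part of the newly added piece of the next stage $M_j$, which is then thickened and smoothed to a compact bordered Riemann surface. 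This is precisely the point at which control over the complex structure is relinquished.

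For part (i) I would realise the Cantor set as $C=\bigcap_k \overline{U_k}$, where $U_k\subset R$ is a decreasing sequence of open sets, each a disjoint union of finitely many topological discs with $\overline{U_{k+1}}\subset U_k$, and set $M_k=R\setminus U_k$; this is an increasing sequence of compact bordered surfaces with $\bigcup_k M_k=R\setminus C=M$. The freedom lies in choosing the discs in $U_k$ small, and splitting each disc into at least two strictly smaller discs at the next stage, so that $C$ becomes perfect, compact, and totally disconnected, while the annular collars $M_{k+1}\setminus\mathring M_k$ leave room to run the arc attachment needed at that step. For part (ii) I would fix a smooth exhaustion $S_1\subset S_2\subset\cdots$ of $S$ by compact bordered subsurfaces and build the complex structure one collar at a time: having equipped $S_j$ with a structure making it a compact bordered Riemann surface carrying $f_j$, I would choose a complex structure on the cobordism $S_{j+1}\setminus\mathring S_j$ that accommodates the attached arc and realises $S_{j+1}$ as a smoothly bounded bordered Riemann surface, extending $f_j$ across it as above. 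Patching these choices yields a global structure $J$ on $S$ with $M=(S,J)$ and $\bigcup_j S_j=M$.

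The step I expect to be the main obstacle is the bookkeeping that reconciles the hitting mechanism with the simultaneous construction of the surface. In Theorem \ref{th:dense} the conformal diffeomorphism keeps everything inside a fixed $M$, so convergence of the conformal structure is automatic; here that safeguard is gone. Thus I must verify, for part (i), that the disc-removal scheme genuinely produces a Cantor set rather than merely a closed set, and that the bordered surfaces stay smoothly bounded after each arc attachment; and for part (ii), that the collar-by-collar choices of complex structure are mutually compatible and exhaust $S$, so that the limit is a bona fide complex structure inducing the given smooth one on the prescribed topology. Both are arranged by the quantitative control developed in \cite[proofs of Theorems 1.6 and 1.8]{AlarconForstneric2023AMPA}, which I would import essentially verbatim; the analytic content of the hitting step itself is unchanged from Theorem \ref{th:dense}.
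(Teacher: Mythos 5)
Your top-level plan (induction as in Theorem \ref{th:dense}, hitting arcs plus Mergelyan approximation, building the surface as you go) is the right one, but there is a genuine gap at the step where the topology of the domains changes, and that step is the whole point of Corollary \ref{co:dense-2}. The arcs you attach are, as in Theorem \ref{th:dense}, arcs meeting $bM_{j-1}$ at a single endpoint; thickening such an arc produces a ribbon, and $M_j=M_{j-1}\cup(\text{collar})\cup(\text{ribbon})$ is diffeomorphic to $M_{j-1}$: the complementary discs acquire a slit but never split. Hence, in part (i) the sets $U_j=R\setminus M_j$ stay connected, their nested intersection is a continuum, and no Cantor set arises. If instead you insist, as your nested scheme requires, that each disc of $U_j$ contain at least two strictly smaller discs of $U_{j+1}$, then $M_{j+1}\setminus\mathring M_j$ contains pair-of-pants regions, and you have no tool to make $f_{j+1}$ holomorphic there: in part (i) the complex structure on $M\subset R$ is induced from $R$, so control of the structure cannot be ``relinquished'' at all, and since $X$ is an arbitrary complex manifold (not Oka-1), holomorphic maps do not extend or approximate from $M_j$ to a strictly larger prescribed domain; Theorem \ref{th:Mergelyan} only gives holomorphy on an uncontrolled thin neighbourhood of the set on which the map is already defined, and a smooth extension over a two-dimensional region is useless because $\Ascr(M_{j+1})$ requires holomorphy on the interior. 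The same defect appears in part (ii): thickening arcs with one free endpoint never creates handles or new boundary components, so a surface $S$ of nontrivial (e.g.\ infinite) topology cannot be exhausted by your scheme. In short, the two halves of your proposal --- the disc-splitting scheme for $C$ and the thin-ribbon extension mechanism for the maps --- are incompatible as stated, and deferring the reconciliation to ``bookkeeping'' imported from \cite{AlarconForstneric2023AMPA} conceals precisely the missing idea.

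That missing idea is to put the topology change into the admissible set \emph{before} applying Mergelyan, by attaching finitely many \emph{arcs with both endpoints on the boundary}: in (i), for each disc $D$ of $U_j$, arcs $\lambda\subset D$ with $b\lambda\subset bD$ cutting $D$ into the pieces destined to become the discs of $U_{j+1}$; in (ii), arcs forming the cores of the handles of $S_{j+1}\setminus\mathring S_j$, so that $M_j\cup\Gamma_j$ is a spine (strong deformation retract) of $M_{j+1}$. One extends the map smoothly over these arcs (with value $b_j$ at an interior point of the hitting arc), applies Theorem \ref{th:Mergelyan} to the admissible set $M_j\cup\Gamma_j$, and only then defines $M_{j+1}$: in (i) as $R\setminus U_{j+1}$, where $U_{j+1}$ consists of the slightly shrunk complementary pieces, which now \emph{is} contained in the thin Mergelyan neighbourhood exactly because the splitting arcs belong to the set being approximated on; in (ii) as a thin compact neighbourhood of $S_j\cup\Gamma_j$ identified with $S_{j+1}$ by a diffeomorphism, along which the complex structure is pulled back --- this, not the thickened hitting arc, is where control of $J$ is lost. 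With this modification the discs split and their diameters can be forced to tend to $0$, so $C=\bigcap_j\overline{U_j}$ is a Cantor set, and in (ii) arbitrary topology is exhausted; the remainder of your argument (interpolation, convergence, and Remark \ref{rem:transversality} for the immersion and injectivity statements) then runs as in Theorem \ref{th:dense}.
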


We leave the details of the proof to the reader.
In particular, in both cases (and up to a suitable choice of the Cantor set 
$C$ in assertion (i) and of the complex structure $J$ in assertion (ii)) 
there is a holomorphic map $M\to X$, which can be chosen an immersion 
if $\dim X>1$ and an injective immersion if $\dim X>2$, whose image is 
everywhere dense in $X$. 
 
The analogue of Theorem \ref{th:dense} also holds for holomorphic 
Legendrian immersions in an arbitrary connected complex 
contact manifold $(X,\xi)$. These are immersions
which are tangential to the holomorphic contact subbundle $\xi$ 
of the tangent bundle $TX$. The only essential difference in the proof 
is that we use the Mergelyan approximation theorem for Legendrian 
immersions $f:S\to X$ from an admissible set $S$ 
in an open Riemann surface $R$, 
given by \cite[Theorem 1.2]{Forstneric2022APDE}. 
We refer to that paper for the background on this subject. 
The precise result that one obtains is the following; 
we leave the details to the reader.

%
%
\begin{theorem}\label{th:denseLegendrian}
Assume that $(X,\xi)$ is a connected complex contact manifold,
$\overline M=M\cup bM$ is a compact bordered Riemann surface, 
and $f:\overline M\to X$ is a $\xi$-Legendrian immersion of class $\Ascr^r(\overline M)$ 
for some integer $r\ge 4$ (i.e., smooth of order $r$ on $\overline M$
and holomorphic on $M$). 
Given a compact subset $K\subset M$, a countable set $B\subset X$, and 
a number $\epsilon>0$, there is a holomorphic Legendrian 
immersion $F: M\to X$ satisfying the conditions in Theorem \ref{th:dense},
which can be chosen injective if the interpolation conditions allow it.

In particular, every bordered Riemann surface admits an injective holomorphic 
Legendrian immersion to $X$ whose image is everywhere dense in $X$.
\end{theorem}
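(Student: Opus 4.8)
The plan is to mirror the inductive construction in the proof of Theorem \ref{th:dense}, replacing every use of the Mergelyan approximation theorem (Theorem \ref{th:Mergelyan}) by its Legendrian counterpart \cite[Theorem 1.2]{Forstneric2022APDE}, and then checking that the single nonholomorphic ingredient — the reparametrizing conformal diffeomorphism — preserves the Legendrian condition. As in the reference proof, I would first reduce to the case where $\overline M$ is a smoothly bounded compact domain without holes in an open Riemann surface $R$ (by \cite[Theorem 8.1]{Stout1965TAMS}) and where $f_0=f$ is a holomorphic Legendrian immersion on $\overline M$; the latter is achieved by the Legendrian Mergelyan theorem, which is also where the hypothesis $r\ge 4$ is used. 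I would then build a sequence of holomorphic Legendrian immersions $f_j:\overline M\to X$, smoothly bounded compact domains $K_j\subset M$ that are strong deformation retracts of $M$ and exhaust $M$, numbers $\epsilon_j>0$, and points $a_j\in M$ satisfying the exact analogues of conditions {\rm (1$_j$)}--{\rm (5$_j$)}. The limit $F=\lim_{j\to\infty} f_j$ is then a holomorphic Legendrian immersion, because the Legendrian condition $dF(TM)\subset\xi$ is closed under locally uniform convergence of the $1$-jets.

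The inductive step is where the contact geometry enters. If $b_j\in f_{j-1}(M)$ nothing needs to be done. Otherwise I would attach to $\overline M$ a short embedded arc $\gamma\subset R$ meeting $bM$ transversely at one endpoint $x$, and extend $f_{j-1}$ across $\gamma$ to a \emph{generalized Legendrian immersion} on the admissible set $\overline M\cup\gamma$ whose value at an interior point $y\in\gamma$ equals $b_j$. The only additional input needed here over the holomorphic case is that in a connected complex contact manifold any two points can be joined by a Legendrian arc, so that $f_{j-1}(x)$ can be connected to $b_j$ by a Legendrian path carried by $\gamma$; this holomorphic horizontal (Legendrian) connectivity is standard, since the contact distribution is bracket-generating. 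I would then invoke \cite[Theorem 1.2]{Forstneric2022APDE} to approximate this generalized Legendrian immersion, as closely as desired uniformly on $\overline M$ together with the derivative data along $\gamma$, by a holomorphic Legendrian immersion $g$ defined on a neighbourhood $V\subset R$ of $\overline M\cup\gamma$, with $g(y)=b_j$, with the previously fixed interpolation $g(a_i)=b_i$ for $i<j$, and with $g$ agreeing with $f$ to the prescribed order at the points of $C$.

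The final maneuver is to move the point $y$ into the interior of the domain without leaving the Legendrian class. Exactly as in the proof of Theorem \ref{th:dense}, \cite[Theorem 6.7.1]{AlarconForstnericLopez2021} provides a conformal diffeomorphism $\phi:\overline M\to\phi(\overline M)\subset V$ that is close to the identity on $K_{j-1}$, agrees with the identity to high order at $C\cup\{a_0,\ldots,a_{j-1}\}$, and satisfies $y\in\phi(M)$. The key point, special to this setting but immediate, is that $f_j:=g\circ\phi$ is again Legendrian: since $\phi$ is holomorphic, $d(g\circ\phi)(T_pM)\subset dg(T_{\phi(p)}M)\subset\xi$, so the Legendrian condition is invariant under holomorphic reparametrization of the source. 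Thus $f_j$ is a holomorphic Legendrian immersion hitting $b_j$, approximating $f_{j-1}$ on $K_{j-1}$, and respecting all interpolation conditions, which closes the induction. The immersion and injectivity assertions are obtained, as in Remark \ref{rem:transversality}, by a general position argument performed within the class of Legendrian immersions; note that a complex contact manifold carrying Legendrian curves has $\dim_\C X\ge 3$, so the injectivity regime $\dim X>2$ is always in force. I expect the main obstacle to be purely the verification that the Legendrian Mergelyan theorem applies on the admissible set $\overline M\cup\gamma$ with a point of the arc pinned to an arbitrary target $b_j$; once Legendrian connectivity of $X$ realizes $\gamma$ as (the source of) a Legendrian arc through $b_j$, the remainder of the construction is formally identical to the holomorphic case of Theorem \ref{th:dense}.
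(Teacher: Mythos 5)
Your treatment of the first assertion follows the paper's own route: the paper proves this theorem precisely by rerunning the inductive scheme of Theorem \ref{th:dense} with the Mergelyan theorem replaced by the Legendrian Mergelyan theorem \cite[Theorem 1.2]{Forstneric2022APDE}, leaving the details to the reader. You supply correctly the two points where contact geometry genuinely enters: the extension of $f_{j-1}$ across the attached arc $\gamma$ as a generalized Legendrian map hitting $b_j$ (which needs smooth horizontal connectivity of $X$; your appeal to the bracket-generating property of the real distribution underlying $\xi$, i.e.\ Chow--Rashevskii, is sound, since for local holomorphic sections $v,w$ of $\xi$ with $d\alpha(v,w)\neq 0$ the brackets $[v,w]$ and $[\I v,w]$ already span the real codimension-two complement of $\xi$), and the observation that precomposition with the holomorphic reparametrization $\phi$ preserves the Legendrian condition. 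The injectivity remark via $\dim_\C X=2n+1\ge 3$ is also correct.

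However, your proposal does not prove the second assertion of the theorem (``In particular, every bordered Riemann surface admits an injective holomorphic Legendrian immersion to $X$ whose image is everywhere dense in $X$''). The inductive scheme requires a \emph{seed}: a Legendrian immersion $f:\overline M\to X$ of class $\Ascr^r(\overline M)$ to start from, and its existence for an arbitrary connected complex contact manifold $(X,\xi)$ and arbitrary bordered Riemann surface is not automatic and is not produced anywhere in your argument. The paper closes this gap as follows: by the holomorphic Darboux theorem (see the appendix of \cite{AlarconForstnericLopez2017CM}), $\xi$ is locally the kernel of $\alpha=dz+\sum_{i=1}^n x_i\, dy_i$ on $\C^{2n+1}$; by \cite{AlarconForstnericLopez2017CM} every compact bordered Riemann surface admits a holomorphic Legendrian embedding $g:\overline M\hra (\C^{2n+1},\alpha)$; and composing $g$ with the contact scaling $(x,y,z)\mapsto (tx,ty,t^2z)$ for small $t>0$ shrinks its image into the Darboux chart, yielding a holomorphic Legendrian embedding $\overline M\hra (X,\xi)$ to which the first part of the theorem applies. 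Without this (or an equivalent) existence argument, the final statement of the theorem remains unproved.
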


To justify the last statement in the theorem, recall that every holomorphic contact
subbundle $\xi\subset TX$ is given in suitable local holomorphic coordinates 
at any point $p\in X$ as the kernel of the standard holomorphic contact form
$
	\alpha=dz+\sum_{i=1}^n x_i dy_i,
$
on $\C^{2n+1}$ with $2n+1=\dim X$, 
where $x=(x_1,\ldots,x_n)\in\C^n,\ y=(y_1,\ldots,y_n)\in\C^n$, and $z\in \C$ 
are complex coordinates on $\C^{2n+1}$.
(This is the holomorphic Darboux theorem; see 
Alarc\'on et al. \cite[Appendix]{AlarconForstnericLopez2017CM}).
It was shown in \cite{AlarconForstnericLopez2017CM} that every compact 
bordered Riemann surface $\overline M$ admits a holomorphic 
Legendrian embedding $g:\overline M\hra (\C^{2n+1},\alpha)$.
Composing $g$ by the contact isomorphism 
$(x,y,z)\mapsto (tx,ty,t^2z)$ on $(\C^{2n+1},\alpha)$ for a suitable $t>0$ ensures 
that $g(\overline M)$ lies in the image of the local chart, 
so we obtain a holomorphic Legendrian embedding $f:\overline M\hra (X,\xi)$. 
Applying the first part of the theorem to $f$ yields the result.

%
%
%
%
\medskip
\noindent {\bf Acknowledgements.} 
Alarc\'on is partially supported by the State Research Agency (AEI) via the grants 
PID2020-117868GB-I00 and PID2023-150727NB-I00, and the ``Maria de Maeztu'' Unit of Excellence IMAG, reference CEX2020-001105-M, funded by MICIU/AEI/10.13039/501100011033/, 
and the Junta de Andaluc\'ia grant P18-FR-4049, Spain.
Forstneri\v c is supported by the European Union 
(ERC Advanced grant HPDR, 101053085) and grants P1-0291 and N1-0237 from 
ARIS, Republic of Slovenia. 
The authors thank Rafael Andrist, Kyle Broder, 
Fr\'ed\'eric Campana, J\'anos Koll\'ar, 
Uro\v s Kuzman, Finnur L\'arusson, Thomas Peternell, 
and Alexandre Sukhov for consultations and advice.
They also thank Olivier Benoist and Olivier Wittenberg for the communication 
regarding their work \cite{BenoistWittenberg2025}, and 
Song-Yan Xie for the communication regarding the papers
\cite{GuoXie2024MA,Xie2024IMRN}.

%
%
%
\medskip
\noindent {\bf Statements and Declarations.}
On behalf of all authors, the corresponding author 
states that there is no conflict of interest. 
The manuscript has no associated data.




\end{document}